\theoremstyle{plain}                          
\theoremstyle{definition}                     
\newtheorem{lemma}{Lemma}[section]
\newtheorem{remark}[lemma]{Remark}
\newtheorem{example}[lemma]{Example}
\newtheorem{theorem}[lemma]{Theorem}
\newtheorem{corollary}[lemma]{Corollary}
\newtheorem{definition}[lemma]{Definition}
\newtheorem{proposition}[lemma]{Proposition}
\theoremstyle{remark}                         
\newcommand{\field}[1]{\mathbb{#1}} 
\newcommand{\C}{\mathcal{C}} 
\newcommand{\K}{\field{K}}
\renewcommand{\H}{\tilde{H}}
\renewcommand{\S}{\field{S}}
\renewcommand{\P}{\mathcal{P}}
\newcommand{\Sus}{\mathcal{S}}
\newcommand{\D}{\mathcal{D}}
\newcommand{\X}{\mathcal{X}}
\newcommand{\E}{\mathcal{E}}
\newcommand{\Q}{\field{Q}}
\newcommand{\NN}{\tilde{N}_*}
\newcommand{\N}{\tilde{N}^*}
\newcommand{\F}{\field{F}} 
\title{$\E_n$-Hopf invariants}
\author{Felix Wierstra}
\date{}
\affil{Faculty of Mathematics and Physics, Charles University, Sokolovsk\'a 49/83, 186 75 Praha 8, Czech Republic, felix.wierstra@gmail.com}
\begin{document}

\maketitle

\abstract{The classical Hopf invariant is an invariant of homotopy classes of maps from $S^{4n-1} $ to $S^{2n}$, and is an important invariant in homotopy theory.  The goal of this paper is to use the Koszul duality theory for $\E_n$-operads to define a generalization of the classical Hopf invariant. One way of defining the classical Hopf invariant is by defining a pairing between the cohomology of the associative bar construction on the cochains of a space $X$ and the homotopy groups of $X$. In this paper we will give a generalization of the classical Hopf invariant by defining a pairing between the cohomology of the $\E_n$-bar construction on the cochains of $X$ and the homotopy groups of $X$. This pairing gives us a set of invariants of homotopy classes of maps from $S^m$ to a simplicial set $X$, this pairing can detect more homotopy classes of maps than the classical Hopf invariant. 

The second part of the paper is devoted to combining the $\E_n$-Hopf invariants with the  Koszul duality theory for $\E_n$-operads to get  a relation between the $\E_n$-Hopf invariants of a space $X$ and the $\E_{n+1}$-Hopf invariants of the suspension of $X$. This is done by studying the suspension morphism for the $\E_\infty$-operad, which is a morphism from the $\E_{\infty}$-operad to the desuspension of $\E_\infty$-operad. We show that it induces a functor from $\E_\infty$-algebras to $\E_\infty$-algebras, which has the property that it sends an $\E_\infty$-model for a simplicial set $X$ to an $\E_\infty$-model for the suspension of $X$.

 We use this result to give a relation between the $\E_n$-Hopf invariants of maps from $S^m$ into $X$ and the $\E_{n+1}$-Hopf invariants of maps from $S^{m+1}$ into the suspension of $X$. One of the main results we show here, is  that this relation can be used to define invariants of stable homotopy classes of maps. 

\tableofcontents

\section{Introduction}




The classical Hopf invariant is defined as an invariant of maps $f:S^{4n-1} \rightarrow S^{2n}$ and was initially used to show that the Hopf fibration is not homotopic to the constant map, but found later many more applications. One way of defining the classical Hopf invariant is by using the associative bar construction on the cochains of a space $X$. More precisely this is done by defining a pairing
\[
\left<,\right>:H^*(B_{Ass} C^*(X;\K)) \otimes \pi_*(X) \rightarrow \K,
\]  
where $X$ is a simply-connected space and $\K$ is a commutative ring. The goal of this paper is to generalize this pairing by using the fact that the singular cochains on a space are not just an associative algebra, but are an $\E_{\infty}$-algebra. By using more of this $\E_{\infty}$-structure we will get more refined invariants which have the ability to detect more homotopy classes of maps. The idea is that we replace the associative bar construction by the  $\E_n$-bar construction, this bar construction has in general more homology and will therefore be able to detect more maps. The first main result we obtain in this paper is the following theorem which can be found as Theorem \ref{thrmEnhopfinv} below.

\begin{theorem}\label{theorem1}
Let $X$ be a simply-connected simplicial set, then for $k>n$ there exists a pairing 
\[
\left<,\right>_{\E_n}:H^k(B_{\E_n}\N(X))\otimes \pi_k(X)\rightarrow \K,
\]
where $B_{\E_n}\N(X)$ is the $\E_n$-bar construction on the reduced normalized chains of $X$ and $\K$ is a field.
\end{theorem}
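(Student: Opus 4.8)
The plan is to build the pairing by pulling a cohomology class back along a representative $f\colon S^{k}\to X$ and evaluating it against a distinguished class of the sphere; the crux is the computation of $H^{*}(B_{\E_n}\N(S^{k}))$, and this is where the hypothesis $k>n$ enters. First I would record the functoriality. The reduced normalized cochains $\N(-)$ form a contravariant functor from pointed simplicial sets to $\E_n$-algebras (restrict the canonical $\E_\infty$-structure along $\E_n\to\E_\infty$), and it sends based simplicial homotopies to $\E_n$-homotopies of maps of $\E_n$-algebras. Using that $B_{\E_n}$ is a homotopy functor carrying $\E_n$-homotopic maps to homotopic maps (developed in the body of the paper), a class $[f]\in\pi_k(X)$ --- represented by a based simplicial map $f\colon S^{k}\to X$, after replacing $X$ by a weakly equivalent Kan complex if needed --- determines a map $f^{\sharp}\colon H^{k}(B_{\E_n}\N(X))\to H^{k}(B_{\E_n}\N(S^{k}))$ depending only on $[f]$. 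Note that $k>n\ge 1$ forces $k\ge 2$, so $S^{k}$ is simply connected and $\pi_k(X)$ is abelian.

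Next I would compute the target. Because $k>n$, the $\E_n$-algebra $\N(S^{k})$ is quasi-isomorphic to the \emph{trivial} $\E_n$-algebra on a one-dimensional graded space: on $\tilde H^{*}(S^{k};\K)=\K$ every $\E_n$-operation of arity $\ge 2$ vanishes --- the cup product lands in $\tilde H^{2k}(S^{k})=0$, and the little-$n$-disks operations that would detect the bottom, identity Steenrod-type operation on $\tilde H^{k}$ are not present once $k>n$ --- while this cohomology is sparse enough that $\N(S^{k})$ is intrinsically formal. By Koszul duality for the $\E_n$-operad it follows that $B_{\E_n}\N(S^{k})$ is quasi-isomorphic to a cofree coalgebra over the Koszul dual cooperad of $\E_n$, whose underlying complex is $\bigoplus_{m\ge 1}\E_n^{!}(m)\otimes_{\Sigma_m}W^{\otimes m}$ with vanishing differential (here $W$ is the appropriate one-dimensional shift), so its cohomology is read off from $\E_n^{!}$, equivalently from the homology of the configuration spaces $\mathrm{Conf}_m(\R^{n})$ with their $\Sigma_m$-actions. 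A degree count then isolates, in the degree matching $\pi_k$, a single canonical copy of $\K$ coming from the weight-one part --- with distinguished generator $\theta_k$ the image of the fundamental class of $S^{k}$ --- so that $H^{k}(B_{\E_n}\N(S^{k}))\cong\K$ canonically up to the choice of an orientation of $S^{k}$; write $\epsilon$ for this isomorphism, normalised by $\epsilon(\theta_k)=1$.

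The pairing is then $\langle\alpha,[f]\rangle_{\E_n}:=\epsilon(f^{\sharp}\alpha)$. It is $\K$-linear in $\alpha$ by construction, and additive in $[f]$: representing $[f]+[g]$ by the composite $S^{k}\xrightarrow{\,\mathrm{pinch}\,}S^{k}\vee S^{k}\xrightarrow{\,f\vee g\,}X$, and using that $\N$ turns the wedge into a product of $\E_n$-algebras up to quasi-isomorphism and that the pinch map acts diagonally on the weight-one class $\theta_k$, gives $\langle\alpha,[f]+[g]\rangle_{\E_n}=\langle\alpha,[f]\rangle_{\E_n}+\langle\alpha,[g]\rangle_{\E_n}$. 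Unwinding everything yields the expected explicit formula: a degree-$k$ cocycle of $B_{\E_n}\N(X)$ is a family of components $\alpha_m\in\E_n^{!}(m)\otimes_{\Sigma_m}(\N(X)[1])^{\otimes m}$ satisfying a Maurer--Cartan equation, and $\langle\alpha,[f]\rangle_{\E_n}$ is the sum over $m$ of the pairing of $\alpha_m$ against the iterated $\E_n$-coproducts of the cycle $f_{*}[S^{k}]$, using the Koszul pairing on operad degrees; the Maurer--Cartan equation is exactly what makes this independent of the chosen cycle, independent of the coboundary class of $\alpha$, and invariant under homotopies of $f$ --- an alternative, hands-on route to the well-definedness of the first paragraph.

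The hard part will be the sphere computation: establishing that $\N(S^{k})$ is $\E_n$-formal and trivial precisely when $k>n$, and then carrying out the degree count over the Koszul dual cooperad $\E_n^{!}$ carefully --- including the parity issues coming from the $\Sigma_m$-coinvariants in $\E_n^{!}(m)\otimes_{\Sigma_m}W^{\otimes m}$ --- to guarantee that $\theta_k$ is the \emph{only} cohomology class in its degree, so that $\epsilon$ is well defined. The homotopy invariance of $B_{\E_n}\circ\N$ needed above is more foundational but should follow from the homotopical properties of the $\E_n$-bar construction and of the $\E_n$-structure on normalized cochains; granting those, it is routine.
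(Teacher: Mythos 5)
Your overall route is the same as the paper's: pull the class back along a simplicial representative of $[f]$, reduce everything to the computation that $H^{-k}(B_{\E_n}\N(S^k))$ is one-dimensional and generated by the weight-one image of the fundamental class (this is exactly where $k>n$ enters, via the degree bound on $\D_n(r)$), evaluate against the sphere, and get homotopy invariance from the fact that the $\E_n$-bar construction turns homotopic maps of cochain algebras into chain-homotopic maps. Two points of difference are worth recording. First, you phrase the invariant purely on cohomology via the map $f^{\sharp}$ and the normalization $\epsilon$, whereas the paper constructs an explicit weight-reduction algorithm (using the extra ``$\E_n$-degree'' grading) and proves its well-definedness by hand; for the bare existence of the pairing your cohomological formulation suffices and is cleaner, but the algorithm is what the paper later uses for actual computations and for the suspension formula, so it is not a redundant detour. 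You do, on the other hand, address additivity in $\pi_k(X)$ via the pinch map, which the paper's proof never verifies. Second, your treatment of the sphere has two soft spots: (i) the bar construction of the trivial one-dimensional $\E_n$-algebra does \emph{not} have vanishing differential --- the Koszul dual cooperad $\D_n=\mathcal{S}^{-n}\E_n^{\vee}$ carries an internal differential $d_3$ --- although this does not affect the degree count, since for $k>n$ all weight $\geq 2$ elements sit in degrees strictly below $-k$, so the weight-one class is automatically a non-bounding cocycle; (ii) ``intrinsic formality from sparseness'' is not free: $\N(S^k)$ is famously \emph{not} formal as an $\E_\infty$-algebra, so the argument must genuinely use that all $\E_n$-operations (and all transferred higher operations, bounded by the top degree $n(r-1)$ of $\D_n(r)$) vanish for degree reasons when $k>n$, via a homotopy-transfer argument over the field $\K$. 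The paper sidesteps this by quoting Berger--Fresse's explicit result that the one-simplex model of $S^k$ is a trivial $\E_n$-algebra for $n<k$, together with the fact (proved there under projectivity hypotheses on the $\S_r$-modules, satisfied by the Barratt--Eccles filtration) that $B_{\E_n}$ preserves quasi-isomorphisms; you would need to either supply the transfer argument or cite the same input. With those two repairs your proposal is a correct proof of the statement.
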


The reason why we need the condition that $k>n$ is because the $n$-sphere is only formal as an $\E_n$-algebra for $n<k$, more details about this problem will be given in Section \ref{secEnhopfinv}. This is also the reason why we cannot use the full $\E_{\infty}$-structure to define invariants and need to restrict ourselves to the $\E_n$-structure.

The second part of the paper is spend to study the further properties of the $\E_n$-Hopf invariants. In particular we will study the the behavior of the $\E_n$-Hopf invariants with respect to suspensions. We do this by combining the $\E_n$-Hopf invariants with Fresse's theory of Koszul duality for $\E_n$-operads (see \cite{Fres3}) and the work by Berger and Fresse on operad actions on the normalized cochains of a simplicial set (see \cite{BF1}).

In \cite{BF1}, Berger and Fresse define a morphism $\sigma:\E_{\infty} \rightarrow \Sus^{-1} \E_{\infty}$, from the $\E_{\infty}$-operad to $\Sus^{-1} \E_{\infty}$, the desuspended $\E_{\infty}$-operad, called the suspension morphism. In Section \ref{secsuspensions}, we show how this suspension morphism relates the normalized chains on a space to the normalized chains on the suspension, this is summarized in the following theorem, which can be found as Theorem \ref{thrmsuspensionfunctor} below.

\begin{theorem}\label{theorem2}
Let $X$ be a simplicial set, then there is an isomorphism of $\E_{\infty}$-algebras between $\N(\Sigma X)$ and $\sigma^{-1} \N(X)$. Here $\N(\Sigma X)$ denotes the reduced normalized cochains of the (simplicial) reduced suspension of $X$ and $\sigma^{-1} \N(X)$ denotes the suspension of  the underlying cochain complex of $\N(X)$ equipped with the $\E_{\infty}$-algebra structure coming from the suspension morphism.
\end{theorem}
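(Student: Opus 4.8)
The plan is to reduce the statement to an explicit comparison of operad actions on a small simplicial model of the reduced suspension. First I would pass to Kan's model of the reduced suspension $\Sigma X$, a quotient of the cone $\Delta^0 \star X$ whose only nondegenerate $k$-simplices are the cones $c \star \bar\tau$ on the nondegenerate $(k-1)$-simplices $\bar\tau$ of $X$ (together with the basepoint); this immediately gives an isomorphism of graded modules $\NN(\Sigma X) \cong \NN(X)[1]$, and a short inspection of the face operators --- on a cone only $d_1, \dots, d_k$ survive and these are $d_0, \dots, d_{k-1}$ of $X$, with $d_0$ landing on the basepoint --- identifies the differentials up to the standard suspension sign. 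Dualizing, the underlying cochain complexes of $\N(\Sigma X)$ and of $\sigma^{-1}\N(X)$ are canonically isomorphic. Using this model rather than $S^1 \wedge X$ is what makes the identification strict: for $S^1 \wedge X$ one only gets the Eilenberg--Zilber quasi-isomorphism (indeed $\NN(S^1 \wedge X)$ already has the wrong ranks). A reduced model of $X$ can be fixed at the outset, so this costs nothing.

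The heart of the proof is to upgrade this to an isomorphism of $\E_\infty$-algebras. Here I would use the explicit combinatorial action of the surjection operad (equivalently the Barratt--Eccles operad) on normalized cochains from \cite{BF1}: an arity-$r$ operation indexed by a nondegenerate surjection $u$ sends cochains $\alpha_1, \dots, \alpha_r$ to the cochain whose value on a simplex $\tau$ is a signed sum, over the interval cuttings of the vertex string of $\tau$ prescribed by $u$, of the products $\prod_i \alpha_i(\tau|_{\text{block}_i})$. Evaluating on $Y = \Sigma X$ and on a cone simplex $\tau = c \star \bar\tau$: the cone vertex $c$ is initial, so it lies only in the first block; every block not meeting $c$ is then spanned by vertices of $\bar\tau$ alone, so the corresponding sub-simplex is an ordinary simplex of $X$, goes to the basepoint of $\Sigma X$, and kills its term. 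Determining which cuttings survive and rewriting the surviving blocks $c \star (\text{sub-simplex of }\bar\tau)$ through the isomorphism of the first paragraph turns the $u$-action on $\N(\Sigma X)$ into the suspension of the action, on $\N(X)^{\otimes r}$, of another operation combinatorially derived from $u$. The assertion that makes the theorem work is that this derived operation is exactly $\sigma(u)$, the image of $u$ under the suspension morphism $\sigma \colon \E_\infty \to \Sus^{-1}\E_\infty$ of \cite{BF1}: in particular $\sigma$ kills the generating product (so the cup product of a suspension vanishes) and sends the $\cup_1$-operation to the product (so the $\cup_1$-operation of $\N(\Sigma X)$ is computed by the cup product of $\N(X)$). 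Granting this, the isomorphism of cochain complexes from the first paragraph intertwines the Berger--Fresse $\E_\infty$-action on $\N(\Sigma X)$ with the $\E_\infty$-action on the suspension of $\N(X)$ obtained by restriction along $\sigma$ --- which is by definition $\sigma^{-1}\N(X)$ --- and naturality in $X$ is automatic.

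The step I expect to be the real obstacle is the sign-and-index bookkeeping just indicated: matching, term by term and sign for sign, the interval cuttings of a cone simplex (and how front/back faces and degeneracies behave under the shift $k \mapsto k-1$) against the explicit recipe that defines $\sigma$ in \cite{BF1}. I would organize it by first verifying the identity on a generating family of surjections and then propagating it using the operadic composition and $\Sigma_r$-equivariance relations --- both sides being genuine operad actions, it is enough to check compatibility with those --- or, alternatively, by reducing to a generating family of spaces (spheres and the cells built from them) via naturality. A secondary point needing care is the pointed/reduced setting: one must check that all basepoint and cone-point simplices are consistently annihilated, which is exactly what guarantees both that the identification descends strictly to reduced normalized (co)chains and that the vanishing phenomena used above are honest equalities rather than identities up to homotopy.
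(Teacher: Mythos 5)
Your proposal follows essentially the same route as the paper: first the chain-level isomorphism $s\NN(X)\cong\NN(\Sigma X)$ coming from the cone/quotient model of the reduced suspension, then a direct term-by-term comparison of the Berger--Fresse interval-cut action on suspended simplices with the action of $\sigma(u)$ (reducing to $X=\Delta^n$ by naturality and transferring to the Barratt--Eccles operad via table reduction), with the sign bookkeeping as the remaining verification. One small correction to your combinatorial picture: the surviving cuttings are not those where the cone vertex lies only in the first block, but those where it lies in \emph{every} block exactly once (through zero-length initial intervals), which is what forces $u$ to begin with a permutation of $\{1,\dots,r\}$ and produces the factor $\operatorname{sgn}(u(1),\dots,u(r))$ in the definition of $\sigma(u)$; this, together with the position/permutation sign comparison, is precisely the content of the paper's proof.
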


 The suspension morphism has the additional property that it restricts to a morphism from the $\E_{n+1}$-operad to $\Sus^{-1} \E_n$ and can be used to relate the $\E_n$-Hopf invariants to the $\E_{n+1}$-Hopf invariants. We can use this to define a map $\Upsilon:B_{\E_{n+1}}(\N(\Sigma X)) \rightarrow B_{\E_n}(\N(X))$. This map has the following property.
 
\begin{theorem}\label{theorem3}
Let $X$  be a simplicial set and let $f:S^k \rightarrow X$ be a map, where we assume that $k>n$. Let $\omega \in B_{\E_{n+1}}\N(\Sigma X)$, then we have the following relation between the Hopf pairings
\[
\left<\omega , \Sigma f \right>_{\E_{n+1}}=\left< \Upsilon \omega , f \right>_{\E_n},
\]
where $\Sigma f:S^ {k+1} \rightarrow \Sigma X$ is the suspension of $f$, and $\Upsilon $ is the map that is defined in Definition \ref{defupsilon}.
\end{theorem}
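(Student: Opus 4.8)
The key identity $\left<\omega,\Sigma f\right>_{\E_{n+1}}=\left<\Upsilon\omega,f\right>_{\E_n}$ is a compatibility statement relating the Hopf pairing in two different dimensions through the suspension, so the plan is to unwind the definition of the pairing on both sides and show that the map $\Upsilon$ is precisely the algebraic shadow of the geometric suspension operation $f\mapsto\Sigma f$. I would first recall how $\left<\omega,f\right>_{\E_n}$ is constructed (as in Theorem \ref{thrmEnhopfinv}): given $f:S^k\to X$, one pulls back a bar cocycle $\omega\in B_{\E_n}\N(X)$ along $\N(f):\N(X)\to\N(S^k)$ to land in $B_{\E_n}\N(S^k)$, uses the formality of $S^k$ as an $\E_n$-algebra (for $k>n$) to identify the relevant cohomology of $B_{\E_n}\N(S^k)$ with $\K$, and reads off the scalar. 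The hypothesis $k>n$ is used on both sides of the claimed identity — for $S^k$ with the $\E_n$-structure and for $S^{k+1}$ with the $\E_{n+1}$-structure — and it is exactly the range in which Theorem \ref{theorem2} and its $\E_{n+1}\to\Sus^{-1}\E_n$ refinement apply.

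The core of the argument is a commuting square. By Theorem \ref{theorem2} (and its restriction along $\sigma:\E_{n+1}\to\Sus^{-1}\E_n$), there is an isomorphism $\N(\Sigma X)\cong\sigma^{-1}\N(X)$ of $\E_{n+1}$-algebras, natural in $X$, and likewise $\N(\Sigma S^k)=\N(S^{k+1})\cong\sigma^{-1}\N(S^k)$. Naturality gives a commuting square relating $\N(\Sigma f)$ to $\sigma^{-1}\N(f)$. Applying the $\E_{n+1}$-bar construction and using that $B_{\E_{n+1}}\circ\sigma^{-1}$ is comparable to $B_{\E_n}$ (this comparison is what $\Upsilon$ encodes, per Definition \ref{defupsilon}), I obtain a commuting square
\[
\begin{CD}
B_{\E_{n+1}}\N(\Sigma X) @>{B_{\E_{n+1}}\N(\Sigma f)}>> B_{\E_{n+1}}\N(S^{k+1})\\
@V{\Upsilon}VV @VV{\Upsilon}V\\
B_{\E_n}\N(X) @>{B_{\E_n}\N(f)}>> B_{\E_n}\N(S^k).
\end{CD}
\]
Chasing $\omega$ around this square shows that the two pullbacks $(\Sigma f)^*\omega$ and $f^*(\Upsilon\omega)$ correspond under $\Upsilon$. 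It then remains to check that $\Upsilon$ is compatible with the identifications of $H^{k+1}(B_{\E_{n+1}}\N(S^{k+1}))\cong\K$ and $H^{k}(B_{\E_n}\N(S^k))\cong\K$ used to extract the scalar — i.e. that $\Upsilon$ induces the identity on these ground-field summands. This last point is where formality of spheres enters: one must verify that the chosen $\E_n$- resp. $\E_{n+1}$-formality quasi-isomorphisms for $S^k$ and $S^{k+1}=\Sigma S^k$ can be taken compatibly with $\sigma$, so that $\Upsilon$ matches the generator of one linear dual bar class to the generator of the other.

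I expect the main obstacle to be exactly that last compatibility: ensuring the formality isomorphisms for $S^k$ (as $\E_n$-algebra) and for $S^{k+1}$ (as $\E_{n+1}$-algebra) are aligned under the suspension morphism $\sigma$, rather than merely existing independently. Concretely, one needs that $\sigma^{-1}$ applied to a formality model of $\N(S^k)$ gives a model equivalent (as an $\E_{n+1}$-algebra, in the relevant range of degrees) to a formality model of $\N(S^{k+1})$; this should follow from the fact that the cohomology of a sphere is small enough that the only obstruction classes live outside the range $k>n$, but making this precise — and checking the induced map on the one-dimensional piece of bar cohomology is multiplication by $1$ and not by some unit — is the delicate step. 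Once that normalization is pinned down, the theorem follows by combining the commuting square above with the definition of the pairing on both sides.
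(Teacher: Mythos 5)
Your proposal is correct and follows essentially the same route as the paper: a commuting square relating $\Upsilon$ to the pullbacks along $f$ and $\Sigma f$ (via the identification $\N(\Sigma X)\cong\sigma^{-1}\N(X)$ of Theorem \ref{thrmsuspensionfunctor}), followed by the observation that $\Upsilon$ identifies the one-dimensional cohomology of the sphere bar constructions in the relevant degree. The \emph{delicate} formality-alignment step you anticipate does not actually arise in the paper's setup, because the pairing is computed by explicit weight reduction and evaluation on the fundamental class rather than through a chosen formality quasi-isomorphism, and in weight one $\Upsilon$ is simply the identity (up to the shift), sending the generator of $H^{-(k+1)}(B_{\E_{n+1}}\sigma^{-1}\N(S^{k}))$ to the generator of $H^{-k}(B_{\E_n}\N(S^k))$.
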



This formula and the maps $\Upsilon :B_{\E_{n+1}}(\N(\Sigma X)) \rightarrow B_{\E_n}(\N(X))$ can be used to define the stable Hopf invariants of a space, which we denote by $B^S(X)$ (see Definition \ref{defBS}). The stable Hopf invariants give us invariants of stable homotopy classes of maps. The pairing from Theorem \ref{theorem1} can then be extended to the following pairing.

\begin{theorem}\label{theorem4}
Let $X$ be a simply-connected space, then there exists a pairing 
\[
\left<,\right>_S:H_*(B^S(X))\otimes \pi_*^S(X) \rightarrow \K.
\]
\end{theorem}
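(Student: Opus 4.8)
The plan is to construct the pairing $\left<,\right>_S$ by taking a (co)limit of the $\E_n$-Hopf pairings $\left<,\right>_{\E_n}$ along the suspension maps, using Theorem \ref{theorem3} as the compatibility that makes the construction well-defined. Concretely, recall that the stable homotopy group $\pi_*^S(X)$ is the colimit $\mathrm{colim}_n \pi_{*+n}(\Sigma^n X)$ along the suspension homomorphisms $\Sigma:\pi_{*+n}(\Sigma^n X) \to \pi_{*+n+1}(\Sigma^{n+1} X)$, and dually $B^S(X)$ is assembled (via Definition \ref{defBS}) from the tower of maps $\Upsilon:B_{\E_{n+1}}(\N(\Sigma^{n+1} X)) \to B_{\E_n}(\N(\Sigma^n X))$. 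So a class in $H_*(B^S(X))$ is represented, after passing far enough up the tower, by some $\omega \in B_{\E_{n+1}}\N(\Sigma^{n+1} X)$, and a class in $\pi_*^S(X)$ is represented by some $f:S^{k+1} \to \Sigma^{n+1} X$ with $k+1$ large. I would then define $\left<[\omega],[f]\right>_S := \left<\omega, f\right>_{\E_{n+1}}$ using the pairing from Theorem \ref{theorem1}, valid once $k+1 > n+1$, i.e. once we are high enough in the stable range.

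The first step is to check this is independent of the chosen representative of the class in $\pi_*^S(X)$: if $f$ and $\Sigma f$ represent the same stable class, we must see $\left<\omega, f\right>_{\E_{n}} $ agrees with $\left<\omega', \Sigma f\right>_{\E_{n+1}}$ for the corresponding $\omega'$ lying over $\omega$ under $\Upsilon$; this is exactly the content of Theorem \ref{theorem3}, applied with $\omega'$ in place of $\omega$ and noting $\Upsilon\omega' = \omega$ (up to homotopy/chain homotopy, so that it descends to cohomology). The second step is the dual independence in the $B^S(X)$ variable: if two elements of the bar constructions at different stages represent the same homology class in $H_*(B^S(X))$, then because $B^S(X)$ is by construction the appropriate (co)limit of the $B_{\E_n}\N(\Sigma^n X)$ along the $\Upsilon$ maps, they differ by applying $\Upsilon$'s and by boundaries, and again Theorem \ref{theorem3} together with the fact that $\Upsilon$ is a chain map handles the boundaries. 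The third, essentially bookkeeping, step is to verify that the resulting pairing is bilinear and $\K$-linear in each variable, which follows since each $\left<,\right>_{\E_n}$ is bilinear and the structure maps in both towers are linear.

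I would also need to address well-definedness at the very bottom: Theorem \ref{theorem1} requires $k>n$, so the naive pairing $\left<,\right>_{\E_n}$ is only defined in a range, but for the stable pairing this is harmless because we always work with the stabilized representatives where $k$ exceeds $n$; I would make this precise by noting that for any fixed homological degree in $H_*(B^S(X))$ and any fixed stable degree in $\pi_*^S(X)$, all but finitely many terms of the defining towers are in the good range, and the transition maps identify the pairing computed at level $n$ with the one computed at level $n+1$ via Theorem \ref{theorem3}, so the colimit/limit value is well-defined.

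The main obstacle I anticipate is the precise identification of $H_*(B^S(X))$ with the relevant (co)limit — that is, making sure Definition \ref{defBS} of $B^S(X)$ is set up so that homology classes really are represented by compatible families under $\Upsilon$, and that the $\Upsilon$ maps interact correctly with the bar differentials so that passing to homology commutes with taking the limit (a $\lim^1$ or exactness subtlety may appear here, though over a field and with the connectivity hypotheses it should vanish). Once that identification is in hand, Theorem \ref{theorem3} does all the real work, and the rest is a routine verification that the formula $\left<[\omega],[f]\right>_S := \left<\omega, f\right>_{\E_n}$ respects all the identifications.
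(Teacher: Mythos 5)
Your proposal is correct and follows essentially the same route as the paper: the paper also defines $\left<\omega,f\right>_S$ by projecting $\omega$ from the inverse limit $B^S(X)$ to a finite stage $B_{\E_{n+r}}\N(\Sigma^r X)$, choosing a representative $\tilde{f}:S^{k+r}\rightarrow \Sigma^r X$ of the stable class, and pairing there, with independence of $r$ given by Theorem \ref{thrmenen+1formula} and homotopy invariance by Theorem \ref{thrmEnhopfinv}. Only note that $B^S(X)$ is an inverse limit, so an element projects compatibly to every stage (no need to ``pass up the tower''), and the range condition $k>n$ is stage-independent under the connectivity hypothesis, so the $\lim^1$-type worries you raise do not enter the paper's argument.
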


This theorem can be found as Theorem \ref{thrmBS} below. 

As a consequence of Theorem \ref{theorem3} and Theorem \ref{theorem4} we get a criterion which tells us if a map $f:S^k \rightarrow X$ induces a non-trivial element in the stable homotopy groups of $X$.

\begin{corollary}
Let $f:S^k \rightarrow X$ be a map such that $\left<\omega,f\right>_{\E_n} \neq 0$ for a certain $\omega\in B_{\E_n}\N(X)$. If $\omega$ comes from $B^S(X)$, then the class of $f$ in the stable homotopy groups of $X$ is non-trivial. 
\end{corollary}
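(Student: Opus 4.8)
The plan is to simply chase the hypothesis through the stable Hopf pairing of Theorem \ref{theorem4}, using the compatibility formula of Theorem \ref{theorem3} to see that a nonzero $\E_n$-Hopf pairing which ``comes from'' $B^S(X)$ survives stabilization. First I would make precise the phrase ``$\omega$ comes from $B^S(X)$'': by construction (Definition \ref{defBS}) the stable Hopf invariants $B^S(X)$ are assembled from the tower of maps $\Upsilon:B_{\E_{n+1}}(\N(\Sigma X)) \rightarrow B_{\E_n}(\N(X))$, so an element of $B_{\E_n}\N(X)$ comes from $B^S(X)$ precisely when it is represented by a compatible family $\{\omega_j\}$ with $\omega_0 = \omega$ and $\Upsilon \omega_{j+1} = \omega_j$ (up to the relevant degree shifts), giving a well-defined class $[\omega]\in H_*(B^S(X))$.

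Next I would argue by contradiction: suppose the class of $f:S^k\to X$ in $\pi_*^S(X)$ is trivial, i.e. some iterated suspension $\Sigma^j f: S^{k+j}\to \Sigma^j X$ is null-homotopic. Since the $\E_n$-Hopf pairing of Theorem \ref{theorem1} depends only on the homotopy class of the map (this is part of its being well-defined on $\pi_k$), we get $\left<\eta, \Sigma^j f\right>_{\E_{n+j}} = 0$ for every $\eta \in B_{\E_{n+j}}\N(\Sigma^j X)$. Now I would apply Theorem \ref{theorem3} $j$ times to the component $\omega_j$ of the compatible family: iterating the relation $\left<\omega_{i+1}, \Sigma f_i\right>_{\E_{n+i+1}} = \left<\Upsilon\omega_{i+1}, f_i\right>_{\E_{n+i}} = \left<\omega_i, f_i\right>_{\E_{n+i}}$ with $f_0 = f$ and $f_i = \Sigma^i f$ yields
\[
\left<\omega, f\right>_{\E_n} = \left<\omega_0, f_0\right>_{\E_n} = \left<\omega_j, \Sigma^j f\right>_{\E_{n+j}} = 0,
\]
contradicting the hypothesis $\left<\omega, f\right>_{\E_n}\neq 0$. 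Equivalently, and more cleanly, one packages this as the statement that the stable pairing $\left<[\omega], [f]\right>_S$ of Theorem \ref{theorem4} equals $\left<\omega, f\right>_{\E_n}\neq 0$, so $[f]\in\pi_*^S(X)$ cannot be zero since the stable pairing is (bi)linear and vanishes on the zero class.

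The one point requiring care — and the main (mild) obstacle — is the degree bookkeeping: Theorem \ref{theorem3} requires $k>n$ at each stage, but replacing $(k,n)$ by $(k+i, n+i)$ preserves the strict inequality, so the hypothesis propagates up the tower without trouble; one just has to make sure the compatible family $\{\omega_j\}$ defining the class in $B^S(X)$ lives in the degrees where all the pairings are defined, which is exactly what the construction in Definition \ref{defBS} guarantees. Beyond this, the argument is a formal diagram chase, so I would keep the write-up short: unwind ``comes from $B^S(X)$'', invoke homotopy-invariance of the $\E_n$-Hopf pairing, and iterate Theorem \ref{theorem3}.
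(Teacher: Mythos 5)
Your proposal is correct and follows essentially the same route as the paper: unwind ``comes from $B^S(X)$'' into a compatible family $\{\omega_j\}$ under $\Upsilon$, iterate the suspension formula of Theorem \ref{thrmenen+1formula} to transport the nonzero pairing to every suspension $\Sigma^j f$, and use homotopy invariance (Theorem \ref{thrmEnhopfinv}) to conclude no suspension can be null-homotopic. The only cosmetic difference is that you phrase the last step as a contradiction while the paper argues directly that all suspensions have non-zero Hopf invariant and are therefore non-trivial.
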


The corollary can be found as Corollary \ref{corBS} below.





\subsection{Acknowledgements}

The author would like to thank Alexander Berglund and Dev Sinha for many useful conversations about this project. Further the author would like to thank Clemens Berger for  help with the proof of Theorem \ref{thrmsuspensionfunctor}. The author also acknowledges the financial support from Grant  GA CR  No. P201/12/G028.

\part{Preliminaries}


We will assume that the reader is sufficiently familiar with the theory of Koszul duality for operads and algebras over operads, otherwise we refer the reader to the standard reference \cite{LV}. Unless stated otherwise all definitions and notations are taken from \cite{LV} and can be found in this book.   There is only one major difference between this paper and \cite{LV}, and that is that \cite{LV} assumes that they work over a field of characterstic $0$, and that in this paper we will almost exclusively work over a field of characteristic $p >0$. Most of the results of \cite{LV} hold in greater generality than presented there, but there are a few important differences, which we will discus in this section.



\section{Conventions}

In this paper we will always assume that, unless stated otherwise, we work over a field $\K$ of characteristic $p \geq 0$. We denote the symmetric group on $n$ letters by $\S_n$. If $V$ is a vector space with an action of a finite group $G$, then we denote the coinvariants with respect to $G$ by $V_G$ and the invariants with respect to $G$ by $V^G$. In this paper spaces are implicitly assumed to be simplicial sets. We also assume that the sphere $S^n$ will always be sufficiently triangulated such that maps from the sphere to a simplicial set $X$ can be realized as a map of simplicial sets. The reduced suspension of a simplicial set $X$ is denoted by $\Sigma X$ (see Section \ref{subsecreducedsuspensions} for a detailed definition of the reduced suspension), we will always implicitly assume that all suspensions are reduced. By $\NN(X)$ we denote the reduced normalized chains on the simplicial set $X$, dually $\N(X)$ denotes the reduced normalized cochains on $X$.




\subsection{Grading conventions}

In this paper will always work in the category of chain complexes and therefore use a homological grading. This will imply that normalized cochains on a simplicial set will be concentrated in non-positive   degrees. The operations on the normalized cochains will therefore all have non-negative degrees, which implies that almost all the operads  and cooperads considered in this paper will be concentrated in non-negative degrees. 

Let $\K s$ be the one dimensional chain complex generated by an element $s$ of degree $1$. The suspension of a cochain complex $V_*$ is denoted by $s V_*$ and defined by $sV=\K s \otimes V $. More explicitly, this chain complex is defined by  $s V_*=V_{*-1}$, the signs are given by the Koszul sign rule. The linear dual of a chain complex $V$ will be denoted by $V^{\vee}$, note that with our grading conventions this turns all the negative degrees into positive degrees and positive into negative degrees.

Also note that because we always work in the category of chain complexes and grade the cochains non-positively, the cochains on the reduced suspension of a simplicial set $X$ are isomorphic (as chain complexes) to the desuspension of the cochains of $X$, i.e. $\N(\Sigma X) \cong s^{-1}\N(X)$. To distinguish homology from cohomology and chains from cochains we will still denote the cochains and cohomology with a superscript.


\section{Operads, cooperads, algebras, coalgebras and Koszul duality in characteristic $p$}

One of the main differences between working over a field of characteristic $0$ and a field of characteristic $p>0$ is that we no longer have an isomorphism between the invariants and coinvariants with respect to the symmetric group action. In this section we will briefly explain what kind of consequences this has for our algebras and coalgebras. Most of this section will be based on \cite{Fres1} and \cite{Fres2}, but we try to use the notation from \cite{LV}.

\subsection{$\S$-modules, operads and cooperads}

Recall from Chapter 5 of \cite{LV} that an $\S$-module $M_*$ is a sequence 
\[
M_*=(M(0),M(1),...,M(n),...)
\] 
of chain complexes, such that each $M(n)$ is an $\S_n$-module. Recall that an $\S$-module is called connected if $M(0)=0$ and $M(1)=\K$, in the rest of this paper we will always assume that all $\S$-modules are connected, unless stated otherwise. 

On the category of $\S$-modules we can define two products, called the composition product and the composition product with divided symmetries. When we work over a field of characteristic zero these products are isomorphic, but over a field of characteristic $p$ these products are different.

\begin{definition}
Let $M$ and $N$ be two not necessarily connected $\S$-modules, then we define the composition product $M \circ N$ of $M$ and $N$ by
\[
M \circ N= \bigoplus_{n \geq 0} \left( M(n) \otimes N^{\otimes n} \right)_{\S_n}.
\]
The composition product with divided symmetries $M \tilde{\circ} N$ is defined by 
\[
M \tilde{\circ} N= \bigoplus_{n \geq 0} \left( M(n) \otimes N^{\otimes n} \right)^{\S_n}.
\]
\end{definition}

\begin{remark}
Note that we define both products by using a direct sum instead of a product. In the definition of coalgebras sometimes the product is used intead of a  direct sum. In this paper we will assume that all coalgebras are conilpotent and therefore factor through the direct sum. For more details see Chapter 5 of \cite{LV}.
\end{remark}

\begin{definition}
An operad $\mathcal{P}$ is an $\S$-module which is a monoid in the category of $\S$-modules. In particular it is an $\S$-module together with a map $\gamma:\mathcal{P}\circ \mathcal{P}\rightarrow \mathcal{P}$ satisfying certain conditions.

Dually, a cooperad $\mathcal{C}$ is a comonoid in the category of $\S$-modules with the composition product with divided symmetries, i.e. it is an $\S$-module $\mathcal{C}$ with a map  $\Delta:\mathcal{C}\rightarrow \mathcal{C} \tilde{\circ} \mathcal{C}$ satisfying certain conditions. 

An operad $\mathcal{P}$ (resp. cooperad $\mathcal{C}$) is called connected  if the underlying $\S$-module is connected.
\end{definition}

\begin{remark}
Note that our definition of connected differs from \cite{Fres2} and assumes that $\P(0)=0$ and $\P(1)=\K$, instead of just $\P(0)=0$.
\end{remark}

As is explained in Section 1 of  \cite{Fres2} there is a natural transformation from the composition product to the composition product with divided symmetries. This natural transformation comes from the norm map which is a natural map between the coinvariants and invariants of an $\S_n$-module and is defined as follows. 

Let $X$ be an $\S_n$-module then we define a map 
$$Tr:X_{\S_n} \rightarrow X^{\S_n}$$
$$Tr(x)=\sum_{\sigma \in \S_n} \sigma x.$$

Let $M$ and $N$ be $\S$-modules, then the norm map extends to a natural transformation from the composition product $M \circ N$ to the composition product  with divided symmetries $M \tilde{\circ} N$. This map is denoted by

$$Tr_{M,N}:M \circ N \rightarrow M \tilde{\circ} N,$$
and defined by applying the norm map level-wise.

The norm map is in general not an isomorphism, but as is shown in Proposition 1.1.15 in \cite{Fres2}, under some assumptions on $N$ it is an isomorphism.  



\begin{proposition}[\cite{Fres2}, Proposition 1.1.15]
Let $M$ and $N$ be $\S$-modules such that $N$ is connected, then the norm map
$$Tr_{M,N}:M \circ N \rightarrow M \tilde{\circ}N$$
 is an isomorphism.
\end{proposition}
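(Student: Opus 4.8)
The plan is to reduce the statement, arity by arity, to the elementary fact that the norm map $Tr\colon F_{\S_n}\to F^{\S_n}$ is an isomorphism for every \emph{free} $\K[\S_n]$-module $F$ (this holds over a field of any characteristic), and then to show that the connectedness of $N$ forces the $\S$-modules entering the composition products to be levelwise free over $\K[\S_n]$. Since $Tr_{M,N}$ is defined by applying $Tr$ levelwise and both $M\circ N$ and $M\tilde{\circ}N$ are direct sums over $n\geq 0$, it is enough to fix $n$ and a finite set $S$ and prove that
\[
Tr\colon \bigl(M(n)\otimes (N^{\otimes n})(S)\bigr)_{\S_n}\longrightarrow \bigl(M(n)\otimes (N^{\otimes n})(S)\bigr)^{\S_n}
\]
is an isomorphism of chain complexes, where $\S_n$ acts diagonally: on $M(n)$ by its given action and on $(N^{\otimes n})(S)$ by permutation of the tensor factors.

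The heart of the argument is the analysis of $(N^{\otimes n})(S)$. Using the symmetric monoidal (Day convolution) description of the tensor product of $\S$-modules one has
\[
(N^{\otimes n})(S)=\bigoplus_{S=S_1\sqcup\cdots\sqcup S_n} N(S_1)\otimes\cdots\otimes N(S_n),
\]
the sum running over ordered decompositions of $S$ into a priori possibly empty subsets, with $\S_n$ permuting the blocks. Because $N$ is connected we have $N(\emptyset)=N(0)=0$, so only the decompositions into \emph{nonempty} blocks survive; these correspond exactly to the surjections $f\colon S\twoheadrightarrow \{1,\dots,n\}$ via $S_j=f^{-1}(j)$, and $\S_n$ acts by $f\mapsto\sigma\circ f$. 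The key observation is that this $\S_n$-action on the set of surjections is \emph{free}: if $\sigma\circ f=f$ and $f$ is surjective then $\sigma=\mathrm{id}$. Hence $(N^{\otimes n})(S)$ is a direct sum of free $\S_n$-orbits, and picking one surjection $f$ in each orbit identifies it, as a graded $\K[\S_n]$-module, with a direct sum of copies of $\K[\S_n]$ tensored with the graded vector spaces $N(f^{-1}(1))\otimes\cdots\otimes N(f^{-1}(n))$; the identification involves the usual Koszul signs coming from reordering tensor factors, but these do not affect freeness, and group elements have degree $0$ so no further signs intervene. Thus $(N^{\otimes n})(S)$ is a free $\K[\S_n]$-module.

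To finish, I would use that for any graded $\K[\S_n]$-module $W$ the diagonal module $W\otimes\K[\S_n]$ is free, via the standard untwisting isomorphism $w\otimes\sigma\mapsto\sigma\otimes\sigma^{-1}w$ (equivariant, and sign-free since $\sigma$ has degree $0$); taking $W=M(n)$ shows that $M(n)\otimes (N^{\otimes n})(S)$ is a free $\K[\S_n]$-module. Finally, for a free module $F=\K[\S_n]\otimes V_0$ with $V_0$ carrying the trivial action, one checks directly that $F_{\S_n}\cong V_0$, that $F^{\S_n}$ is the span of the elements $\bigl(\sum_{\sigma}\sigma\bigr)\otimes v$ and hence also $\cong V_0$, and that $Tr\bigl([1\otimes v]\bigr)=\bigl(\sum_{\sigma}\sigma\bigr)\otimes v$, so $Tr$ is precisely this isomorphism. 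Combining the levelwise reduction with these three points gives the proposition.

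I expect the only genuinely delicate step to be the middle one: getting the combinatorial description of $(N^{\otimes n})(S)$ right and checking that the freeness of the $\S_n$-action comes solely from $N(0)=0$ (the condition $N(1)=\K$ is not used here), while keeping track of the Koszul signs. Everything else is formal, and — crucially — characteristic-independent; this is exactly why this particular norm map remains an isomorphism in characteristic $p$ even though the norm map $V_{\S_n}\to V^{\S_n}$ fails to be an isomorphism for a general $\S_n$-module $V$.
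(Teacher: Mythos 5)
Your argument is correct: the reduction to a fixed arity, the observation that $N(0)=0$ forces the surviving decompositions to be indexed by surjections on which $\S_n$ acts freely, the untwisting isomorphism making $M(n)\otimes(N^{\otimes n})(S)$ free, and the fact that the norm map is an isomorphism on free $\K[\S_n]$-modules together give a complete proof, and your remark that only $N(0)=0$ (not $N(1)=\K$) is needed is accurate. Note that the paper itself offers no proof of this statement --- it is quoted from Fresse's book --- and your argument is essentially the standard one underlying that reference (freeness of the induced $\S_n$-action on the expansion of $N^{\otimes n}$ for connected $N$), so there is nothing to reconcile with the paper's text.
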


Recall that we assumed that all our operads and cooperads are connected. So in particular if $\mathcal{C}$ is a cooperad, the norm map induces an isomorphism between $\mathcal{C}\circ \mathcal{C}$ and $\mathcal{C} \tilde{\circ}\mathcal{C}$. Because of this isomorphism we will from now on assume that a cooperad is an $\S$-module $\mathcal{C}$ with a map $\Delta:\mathcal{C} \rightarrow \mathcal{C} \circ \mathcal{C}$ satisfying certain conditions. So from now on we will always assume that the structure map of a cooperad goes to the coinvariants instead of the invariants, and otherwise we will implicitly apply the inverse of the norm map to land in the coinvariants. 

With this convention, the theory of Koszul duality for operads and cooperads works exactly the same as in \cite{LV}. We do not repeat this theory here and assume that the reader is familiar with \cite{LV}, whose definitions, notations and conventions we will use. For more details about Koszul duality for operads in characteristic $p$ we also refer the reader to \cite{Fres1}, in which the proofs also give  explicit details about  the characteristic $p$ case.

\subsection{Algebras and coalgebras}

The main difference between working over a field of characteristic $0$ and characteristic $p>0$ is that there are now two possible definitions of algebras and coalgebras, one using the composition product and one using the composition product with divided symmetries.

\begin{definition}
Let $\mathcal{P}$ be an operad. An algebra over $\mathcal{P}$ is a chain complex $A$, together with a map $\mu_A:\mathcal{P} \circ A \rightarrow A$, which is associative and unital with respect to the operad composition maps. 

A $\mathcal{P}$-algebra with divided symmetries $A$ is a chain complex $A$, together with a map $\mu_A:\mathcal{P} \tilde{\circ}A \rightarrow A$, which is associative and unital with respect to the operadic composition maps.
\end{definition}

Note that we implicitly interpet $A$ as an $\S$-module by putting $A$ in arity $0$. For the precise definition see \cite{Fres2}. Dually to the algebra case we also have two possible definitions for coalgebras over a cooperad, which we  give  in the next definition.

\begin{definition}\label{defcoalgebras}
Let $\mathcal{C}$ be a cooperad. A $\mathcal{C}$-coalgebra is a chain complex $C$, together with a map $\Delta_C:C \rightarrow \mathcal{C} \tilde{\circ} C$, which is coassociative and counital with respect to the cooperadic decomposition maps.

A $\mathcal{C}$-coalgebra with divided symmetries is a chain complex $C$, together with a map $\Delta_C:C \rightarrow \mathcal{C} \circ C$, which is coassociative and counital with respect to the cooperadic decomposition maps.
\end{definition}

\begin{remark}
Because we defined  the composition product with a direct sum instead of a product, the coalgebras (with or without divided symmetries) defined in Definition \ref{defcoalgebras}, should technically be called conilpotent coalgebras (with or without divided symmetries). In this paper we will always assume that all the coalgebras (with or without dived symmetries) are conilpotent. See Section 5.8.5 of \cite{LV} for more details.

\end{remark}

As is explained in Section 1.1.20 of \cite{Fres2}, the norm map also gives us a relation between algebras with divided symmetries and algebras without divided symmetries. In particular the norm map gives us a map 
\[
Tr_{\P,A}:\P \circ A \rightarrow \P \tilde{\circ} A,
\] 
So every $\P$-algebra with divided symmetries is also a $\P$-algebra without divided symmetries. Dually every $\C$-coalgebra $C$ with divided symmetries is also a $\C$-coalgebra without divided symmetries. The structure map is given by the structure map of $C$ composed with the norm map, explicitly it is given by
\[
C \xrightarrow{\Delta} \C \circ C \xrightarrow{Tr_{\C,C}} \C \tilde{\circ} C.
\]


For some examples of algebras with divided symmetries we refer the reader to \cite{Fres2}.



\subsection{Operadic suspension}

In the rest of the paper and in particular the application part the operadic suspension will play an important role, we will here recall these definitions, for more details see Section 7.2.2 of \cite{LV}. 

Let $\mathcal{S}$ be the endomorphism operad of $\K s$, the one dimensional chain complex concentrated in degree $1$, i.e. $\mathcal{S}(n)=Hom_{\K}(\K s ^{\otimes n},\K s)$. The operadic suspension of an operad $\mathcal{P}$ is then defined as the Hadamard or aritywise tensor product with $\mathcal{S}$.

The operadic suspension has the property that if $A$ is a $\mathcal{P}$-algebra, then $sA$ is an $\mathcal{SP}$-algebra.

Dually, the cooperadic suspension of a cooperad $\mathcal{C}$ is defined by taking the aritywise tensor product of $\mathcal{C}$ with $\mathcal{S}^c$, where $\mathcal{S}^c$ is the coendomorphism cooperad of $\K s$, i.e. $\mathcal{S}^c(n)=Hom_{\K}(\K s , \K s^{\otimes n})$. Similar to operads, the suspension of a coalgebra $C$ over a cooperad $\mathcal{C}$ is a coalgebra over $\mathcal{S}^c \mathcal{C}$.

\section{The bar construction relative to an operadic twisting morphism}\label{secthebarconstruction}

For the definition of the $\E_n$-Hopf invaraints we will need the bar construction relative to an operadic twisting morphism. We will now adapt the definition of the relative bar construction from Chapter 11 of \cite{LV}, to work over a field of characteristic $p$. Let $\alpha:\mathcal{C}\rightarrow \mathcal{P}$ be an operadic twisting morphism between a cooperad $\mathcal{C}$ and an operad $\mathcal{P}$. In \cite{LV}, they define the bar and cobar construction as a pair of adjoint functors between the category of $\mathcal{P}$-algebras and conilpotent $\mathcal{C}$-coalgebras.  The main difference is that the bar-cobar adjunction in characteristic $p$, is no longer an adjunction between the category of $\mathcal{P}$-algebras and the category of $\mathcal{C}$-coalgebras, but it is in the characteristic $p$ case an adjunction between the category of $\mathcal{P}$-algebras and the category of  $\mathcal{C}$-coalgebras with divided symmetries. This additional structure will not play an important role in this paper, but for the sake of preciseness and completeness we give a detailed description of the bar construction here.


\begin{definition}
Let $\alpha:\mathcal{C} \rightarrow \mathcal{P}$ be an operadic twisting morphism between a cooperad $\mathcal{C}$ and an operad $\mathcal{P}$. Let $C$ be a $\mathcal{C}$-coalgebra with divided symmetries and $A$ a $\mathcal{P}$-algebra.

A linear map $\phi:C \rightarrow A$ of degree $0$ is called a twisting morphism if it satisfies the Maurer-Cartan equation, which is given by 
\[
\partial(\phi) + \star_{\alpha}(\phi)=0,
\]
where $\star_{\alpha}$ is the operator defined by
\[
\star_{\alpha}:C \xrightarrow{\Delta_C} \mathcal{C} \circ C \xrightarrow{\alpha \circ \phi} \mathcal{P} \circ A \xrightarrow{\gamma_A} A,
\]
where $\Delta_C$ is the coproduct of $C$ and $\gamma_A$ is the product of $A$. The set of twisting morphisms relative to $\alpha$ is denoted by $Tw_{\alpha}(C,A)\subset Hom_{\K}(C,A)$.
\end{definition}

Notice that this definition would not work if $C$ is a coalgebra without divided symmetries, in this  case it would not be possible to define the $\star_{\alpha}$-operator, since the source and target of the middle map would involve different symmetries.

Similarly to \cite{LV}, we can now define the notion of bar and cobar construction. The bar and cobar construction will represent the set of twisting morphisms  and when the operadic twisting morphism $\alpha:\mathcal{C} \rightarrow \mathcal{P}$ is Koszul it will also give us a method to construct functorial free resolutions.



\begin{definition}
Let $\alpha:\mathcal{C} \rightarrow \mathcal{P}$ be an operadic twisting morphism. Then we define a functor 
\[
B_{\alpha}:\mathcal{P}\mbox{-algebras} \longrightarrow \mbox{Conilpotent } \mathcal{C}\mbox{-coalgebras with divided symmetries},
\]
from the category of $\mathcal{P}$-algebras to the category of conilpotent $\mathcal{C}$-coalgebras with divided symmetries. This functor is defined as follows. Let $A$  be a $\mathcal{P}$-algebra, then we define $B_{\alpha}A$ as $(\mathcal{C}(A),d_B)$, the conilpotent quasi-cofree  $\mathcal{C}$-coalgebra with divided symmetries cogenerated by $A$. The differential $d_B$ is defined as $d_B=d_1+d_2+d_3$, where $d_1$ is the extension of $d_A$ to $\mathcal{C}(A)$,which is given by $Id_{\C} \circ'd_{A}$ (see Section 6.1.3 of \cite{LV} for the precise definition of $\circ'$). The differential $d_2$ is the unique coderivation which extends the degree $1$ map 

\[
\mathcal{C} \circ A \xrightarrow{\alpha \circ Id_A} \mathcal{P} \circ A \xrightarrow{\gamma_A} A, 
\]

where $\gamma_A$ is the structure map of $A$. More explicitly, the coderivation $d_2$ is given by the following sequence of maps:

\[
\mathcal{C} \circ A \xrightarrow{\Delta_{(1)}\circ Id_A} (\mathcal{C} \circ_{(1)} \mathcal{C}) \circ A \xrightarrow{(Id_{\mathcal{C}} \circ_{(1)}\alpha) \circ Id_A} (\mathcal{C} \circ_{(1)}\mathcal{P}) \circ A
\]
\[
 \cong \mathcal{C} \circ (A; \mathcal{P} \circ A) \xrightarrow{Id_{\mathcal{C}} \circ (Id_A; \gamma_A)} \mathcal{C} \circ A,
\]
where $\circ_{(1)}$ denotes the infinitesimal composition product (see Section 6.1 of \cite{LV}). The differential $d_3$ is given as the unique extension of the differential $d_{\C}$, explicitly it is given by $d_{\C} \circ Id_A$.
\end{definition}
 
 
The following lemma will be important in Section \ref{secEnhopfinv}.

\begin{lemma}\label{lemmadifferentialsanticommute}
Let $\alpha:\mathcal{C} \rightarrow \mathcal{P}$ be an operadic twisting morphism and let $A$ be a $\mathcal{P}$-algebra. Then the bar differential $d_{B}$ squares to zero, furthermore we have the following identities $d_1^2=0$, $d_2^2=0$, $d_3^2 =0$, $d_1d_2+d_2d_1=0$, $d_1d_3+d_3d_1=0$ and $d_2d_3+d_3d_2=0$.
\end{lemma}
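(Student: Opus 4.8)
The plan is to reduce everything to the structure of $d_B$ as a coderivation on the cofree $\mathcal{C}$-coalgebra with divided symmetries $\mathcal{C}(A)$, and to exploit the fact that a coderivation on a cofree coalgebra is uniquely determined by its projection onto the cogenerators $A$. First I would recall that $d_1 = \mathrm{Id}_{\mathcal{C}} \circ' d_A$, $d_3 = d_{\mathcal{C}} \circ \mathrm{Id}_A$, and $d_2$ is the unique coderivation extending the composite $\gamma_A \circ (\alpha \circ \mathrm{Id}_A): \mathcal{C}\circ A \to A$. Since the sum, composite, and (graded) commutator of coderivations on a cofixed cofree coalgebra are again coderivations, and since a coderivation is zero iff its corestriction to $A$ vanishes, each of the asserted identities $d_i d_j + d_j d_i = 0$ (and $d_i^2 = 0$) can be checked after projecting to $A$. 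This is the standard mechanism by which bar-construction differentials are verified, and it works verbatim over a field of characteristic $p$ once one knows (as recalled in the excerpt, using Proposition 1.1.15 of \cite{Fres2}) that cofree $\mathcal{C}$-coalgebras with divided symmetries behave like the characteristic-zero cofree coalgebras.

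The key steps, in order, would be: (i) $d_1^2 = 0$ follows from $d_A^2 = 0$, since $d_1$ is the canonical extension of $d_A$ and extension is functorial and multiplicative on the nose in this degenerate sense; (ii) $d_3^2 = 0$ follows from $d_{\mathcal{C}}^2 = 0$ by the same reasoning applied on the cooperad side; (iii) $d_1 d_3 + d_3 d_1 = 0$ because $d_A$ and $d_{\mathcal{C}}$ act on disjoint tensor factors of $\mathcal{C}(A)$, so after projecting to $A$ the graded commutator is manifestly zero (Koszul signs cancel in pairs); (iv) $d_1 d_2 + d_2 d_1 = 0$ and $d_2 d_3 + d_3 d_2 = 0$ amount to the compatibility of the twisting datum $\alpha$ and the structure map $\gamma_A$ with the respective differentials — projecting to $A$, the first is the statement that $\gamma_A \circ (\alpha \circ \mathrm{Id}_A)$ is a chain map for the internal differential built from $d_A$ and $d_{\mathcal{C}}$ (using that $\alpha$ has degree $-1$ and that $\gamma_A$ commutes with $d_A$ and with $d_{\mathcal{P}}$, together with the relation between $d_{\mathcal{P}}$, $d_{\mathcal{C}}$ and $\alpha$ encoded in $\alpha$ being a morphism of dg $\mathbb{S}$-modules up to the twisting term); (v) the crucial identity $d_2^2 = 0$ is exactly where the Maurer-Cartan/twisting-morphism equation $\partial(\alpha) + \star(\alpha) = 0$ for $\alpha:\mathcal{C}\to\mathcal{P}$ enters: projecting $d_2^2$ to $A$ produces, via the coderivation formula, a term built from $\Delta_{(2)}$ of $\mathcal{C}$ contracted against $\gamma_A$ applied twice and against $\alpha$ twice, which by the operad associativity of $\gamma_A$ and the cooperad coassociativity of $\Delta$ reorganizes into $\gamma_A$ applied to $(\alpha\star\alpha)\circ\mathrm{Id}_A$, and this vanishes modulo $\partial(\alpha)$, whose contribution is absorbed into the cross terms with $d_1$ and $d_3$; (vi) finally $d_B^2 = (d_1+d_2+d_3)^2 = 0$ follows formally from the six displayed identities. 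Throughout I would cite Section 6.1 and Chapter 11 of \cite{LV} for the coderivation extension lemmas and for the $\circ'$ and $\circ_{(1)}$ formalism, noting only that the proofs there never used characteristic zero in an essential way once the norm-map identification of Proposition 1.1.15 of \cite{Fres2} is in place.

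The main obstacle I anticipate is bookkeeping the Koszul signs in step (v): the coderivation $d_2$ corestricted to $A$ involves the cooperad decomposition $\Delta_{(1)}$, and composing $d_2$ with itself forces one to understand how $\Delta_{(1)}$ iterates, i.e. to use the coassociativity constraint relating $(\Delta_{(1)}\circ\mathrm{Id})\Delta_{(1)}$ to the ``$\Delta_{(2)}$'' decomposition with a symmetrization over which of the two grafted slots is hit. Getting the signs and the symmetric-group actions to match so that the two orders of composition cancel is the delicate computational heart; in characteristic $p$ one must additionally be careful that the divided-symmetries cofree coalgebra structure (landing in invariants, then transported to coinvariants via the norm map per the stated convention) does not introduce extra factors, but since all the $\mathbb{S}$-modules in sight are connected this is exactly what Proposition 1.1.15 of \cite{Fres2} guarantees. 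A secondary, milder obstacle is simply stating precisely the internal-differential compatibility of $\alpha$ (i.e. that $\partial(\alpha) = d_{\mathcal{P}}\alpha - (-1)^{|\alpha|}\alpha d_{\mathcal{C}}$ is controlled by the Maurer-Cartan equation) so that steps (iv) and (v) interlock correctly; once that is pinned down the remaining computations are routine and I would not write them out in full.
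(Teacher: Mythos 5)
Your overall strategy --- treat $d_1,d_2,d_3$ as coderivations on the quasi-cofree coalgebra $\mathcal{C}(A)$ and verify identities after corestricting to the cogenerators $A$ --- is exactly the mechanism behind the paper's proof, which simply defers to Section 11.2.2 of \cite{LV}. However, there is a genuine gap at the heart of your steps (iv)--(v), and it is visible in your own wording. Writing $\pi$ for the projection of $\mathcal{C}\circ A$ onto the cogenerators $A$, the corestrictions of the three mixed quantities are (up to signs)
\[
\pi\bigl(d_1d_2+d_2d_1\bigr)=\gamma_A\bigl((d_{\mathcal{P}}\,\alpha)\circ \mathrm{Id}_A\bigr),\qquad
\pi\bigl(d_2d_3+d_3d_2\bigr)=\pm\,\gamma_A\bigl((\alpha\, d_{\mathcal{C}})\circ \mathrm{Id}_A\bigr),\qquad
\pi\bigl(d_2^{\,2}\bigr)=\gamma_A\bigl((\alpha\star\alpha)\circ \mathrm{Id}_A\bigr).
\]
The Maurer--Cartan equation $\partial(\alpha)+\alpha\star\alpha=0$ only says that the \emph{sum} of these three coderivations vanishes, i.e.\ it gives $d_B^2=0$; it does not give $d_2^2=0$, $d_1d_2+d_2d_1=0$ and $d_2d_3+d_3d_2=0$ separately. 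Your step (v) concedes exactly this point (``vanishes modulo $\partial(\alpha)$, whose contribution is absorbed into the cross terms with $d_1$ and $d_3$''), which directly contradicts your step (iv), where those same cross terms are asserted to vanish on their own. So, as written, your plan establishes $d_B^2=0$, $d_1^2=d_3^2=0$ and $d_1d_3+d_3d_1=0$, but not the three remaining identities --- and those are precisely the ones the paper later uses (in the well-definedness of the weight reduction, Proposition \ref{prophopfiswelldefined}).

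To close the gap you need the extra input that $\alpha$ is compatible with the internal differentials, i.e.\ $\partial(\alpha)=d_{\mathcal{P}}\,\alpha\pm\alpha\, d_{\mathcal{C}}=0$, from which the Maurer--Cartan equation then forces $\alpha\star\alpha=0$ as well; only then do the three displayed corestrictions vanish individually and your coderivation argument finishes as intended. This is not a formal consequence of $\alpha$ being an operadic twisting morphism when $\mathcal{C}$ and $\mathcal{P}$ carry nonzero differentials (as $\D_n$ and $\E_n$ do), so it has to be stated and verified for the twisting morphisms actually in play, or else the lemma should be invoked only through the combination $d_B^2=0$ together with the identities that do hold formally. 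Your remarks on divided symmetries and the norm map are fine and match the paper's conventions; the problem is solely the unjustified splitting of the Maurer--Cartan cancellation into termwise identities.
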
 
 
A large part of the proof of this lemma can be found as part of Section 11.2.2 of \cite{LV}, the rest is straightforward and left to the reader.
 
In the rest of this paper we will need an additional grading on the bar construction  called the weight grading. This grading will be important in Sections \ref{secEnhopfinv} for the definition of the $\E_n$-Hopf invariants.

\begin{definition}
Let $\alpha:\mathcal{C} \rightarrow \mathcal{P}$ be a Koszul twisting morphism and let $A$ be a $\mathcal{P}$-algebra. We define an additional grading on the bar construction by defining the elements of weight $n$ as the elements of  $\mathcal{C}(n)\otimes_{\S_n}A^{\otimes n} \subset B_{\alpha}A$.
\end{definition}

The following property of the bar construction will be very important for the constructions in this paper. This proposition is a generalization of Proposition 11.2.3 of \cite{LV} and Lemma 2.2.5 of \cite{Mil1}.

\begin{proposition}\label{propbarconstructionpreservesqi}
Let $\alpha:\mathcal{C} \rightarrow \mathcal{P}$ be an operadic twisting morphism between a cooperad $\mathcal{C}$ and an operad $\mathcal{P}$. Further assume that $\mathcal{C}(n)_d$ and $\mathcal{P}(n)_d$ are projective $\S_n$-modules for all $d$ and $n$ and that $\mathcal{C}$ and $\mathcal{P}$ are concentrated in non-negative degrees. Let $f:A \rightarrow A'$ be a quasi-isomorphism, then the induced morphism $B_{\alpha}f:B_{\alpha}A \rightarrow B_{\alpha}A'$ is a quasi-isomorphism as well.
\end{proposition}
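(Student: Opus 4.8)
The plan is to mimic the filtration argument used in the characteristic-$0$ case (Proposition 11.2.3 of \cite{LV}, with the refinement of Lemma 2.2.5 of \cite{Mil1}), but to replace every appeal to "Maschke's theorem / invariants $=$ coinvariants" by the projectivity hypothesis on $\mathcal{C}(n)_d$ and $\mathcal{P}(n)_d$. First I would recall that, as a graded $\mathcal{C}$-coalgebra with divided symmetries, $B_\alpha A = \bigoplus_{n\ge 0}\bigl(\mathcal{C}(n)\otimes A^{\otimes n}\bigr)^{\S_n}$, and that the map $B_\alpha f$ is compatible with the weight grading. The differential $d_B = d_1 + d_2 + d_3$ does \emph{not} preserve the weight, but $d_1$ and $d_3$ preserve it while $d_2$ lowers it by one (it contracts two inputs via $\gamma_A$). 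So I would set up the increasing filtration $F_p B_\alpha A = \bigoplus_{n\le p}(\mathcal{C}(n)\otimes A^{\otimes n})^{\S_n}$, which is exhausting and bounded below, stable under $d_B$, and preserved by $B_\alpha f$, and pass to the associated spectral sequences.

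The key computation is the $E^0$- (equivalently $E^1$-)page. On the associated graded, $d_2$ dies, so the induced differential on $\mathrm{gr}_p$ is $d_1 + d_3$, i.e. the differential coming solely from $d_A$ and $d_{\mathcal{C}}$ on the complex $\bigl(\mathcal{C}(p)\otimes A^{\otimes p}\bigr)^{\S_p}$. The crux is then: since $\mathcal{C}(p)_d$ is a projective (hence flat, and with exact invariants functor) $\K[\S_p]$-module for every $d$, the functor $(\mathcal{C}(p)\otimes -)^{\S_p}$ is exact, and it takes the $\S_p$-equivariant quasi-isomorphism $f^{\otimes p}\colon A^{\otimes p}\to A'^{\otimes p}$ to a quasi-isomorphism. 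Here I would note two sub-points: (i) $f^{\otimes p}$ is a quasi-isomorphism of complexes because $\K$ is a field (Künneth), and it is manifestly $\S_p$-equivariant; (ii) projectivity of each $\mathcal{C}(p)_d$ lets one either use that $(-)^{\S_p}$ is exact on the relevant category, or — cleaner — pick an $\S_p$-equivariant chain homotopy equivalence, which exists after replacing $A,A'$ by cofibrant models, or just argue degreewise that $\mathcal{C}(p)_d\otimes A^{\otimes p}$ computes the right thing because a projective $\K[\S_p]$-module is a direct summand of a free one and $\K[\S_p]\otimes_{\K}(-)$ commutes with taking $\S_p$-invariants up to the norm. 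The non-negative grading hypothesis on $\mathcal{C}$ and $\mathcal{P}$, together with $A$ being (in the applications) non-positively graded or at least bounded appropriately, guarantees that in each total degree only finitely many weights contribute, so the filtration is finite in each degree and the spectral sequences converge.

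Finally I would assemble the argument: $B_\alpha f$ induces a quasi-isomorphism on each $\mathrm{gr}_p$, hence an isomorphism on $E^1$-pages of the two filtration spectral sequences; by the convergence just established (bounded below, exhaustive, finite in each total degree) the comparison theorem for spectral sequences gives that $B_\alpha f$ is a quasi-isomorphism on the total complexes. I expect the main obstacle to be point (ii) above: making precise, in characteristic $p$, that $(\mathcal{C}(p)\otimes_{\K} -)^{\S_p}$ sends $\S_p$-equivariant quasi-isomorphisms to quasi-isomorphisms. This is exactly where the projectivity of $\mathcal{C}(p)_d$ as an $\S_p$-module is indispensable — without it one only gets a map on homology that need not be an isomorphism — and it is the one place where the proof genuinely departs from \cite{LV}. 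One clean way to handle it is to observe that a bounded-below complex of projective $\K[\S_p]$-modules is a complex of summands of frees, that tensoring a quasi-isomorphism of $\K$-complexes with a fixed projective $\K[\S_p]$-module preserves quasi-isomorphism (reduce to the free case $\K[\S_p]^{\oplus r}$, where it is Künneth again), and that taking $\S_p$-invariants of a complex of projectives is the same as taking a summand, hence exact; I would spell this reduction out as the heart of the proof and leave the spectral-sequence bookkeeping, which is formal, to the reader.
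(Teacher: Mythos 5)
Your overall strategy is the same in spirit as the paper's (a filtration spectral sequence in which projectivity of the cooperad replaces the characteristic-$0$ appeal to Maschke), but the filtration you choose creates a genuine gap at exactly the step you identify as the heart of the proof. You filter by weight, so your $E^0$-differential is $d_1+d_3$, and you must show that $\bigl(\mathcal{C}(p)\otimes(-)^{\otimes p}\bigr)^{\S_p}$ (or the coinvariants --- note the paper's bar construction is built on $\mathcal{C}(p)\otimes_{\S_p}A^{\otimes p}$; the norm map identifies the two only because of projectivity) sends the quasi-isomorphism $\mathcal{C}(p)\otimes A^{\otimes p}\to\mathcal{C}(p)\otimes A'^{\otimes p}$ of \emph{total} complexes to a quasi-isomorphism. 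Your justification (a complex of projectives is a complex of summands of frees, invariants are ``a summand, hence exact'') only works via the mapping-cone argument for \emph{bounded-below} complexes of projectives, where an acyclic such complex is contractible and any additive functor preserves contractibility. But $A$ is an arbitrary $\mathcal{P}$-algebra, and in the intended applications $A=\N(X)$ is concentrated in non-positive degrees while $\mathcal{C}(p)$ sits in non-negative degrees, so $\mathcal{C}(p)\otimes A^{\otimes p}$ is unbounded in both directions; your appeal to ``$A$ bounded appropriately'' imports a hypothesis that is neither in the statement nor satisfied in practice. For unbounded degreewise-projective complexes the claim you rely on is false in general: over $\F_2[\Z/2]$ the Tate complex $\cdots\xrightarrow{1+t}\F_2[\Z/2]\xrightarrow{1+t}\cdots$ is acyclic and degreewise free, yet its coinvariants have zero differential and nonzero homology, so invariants/coinvariants do not preserve quasi-isomorphisms without a boundedness or finer filtration argument. (Your convergence worry, by contrast, is a non-issue: both filtrations are exhaustive and bounded below, which is all the classical convergence theorem needs; ``finitely many weights per total degree'' is neither needed nor true in general.)

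The paper sidesteps this difficulty by filtering not by weight but by the \emph{homological degree of the cooperad}: $F_pB_\alpha A=\mathcal{C}_{\le p}(A)$. Non-negativity of $\mathcal{P}$ forces $d_2$ to lower this degree and $d_3$ lowers it by one, so the $E^0$-differential is only $d_1$, and the key input is merely that the \emph{fixed} module $\mathcal{C}(n)_d$ (no differential on it) is projective, hence flat, over $\K[\S_n]$, so $\mathcal{C}(n)_d\otimes_{\S_n}(-)$ commutes with homology with no boundedness hypotheses; Künneth over the field then gives $E^1=\mathcal{C}_p(H_*(A))$ and the comparison theorem finishes. You can rescue your weight-filtration argument, but only by inserting a further filtration by the $\mathcal{C}$-degree inside each weight to remove $d_3$ from the $E^0$-page --- which is precisely the paper's filtration, so the repair collapses your proof into theirs.
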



\begin{proof}
To prove the proposition we will use a spectral sequence argument similar to the proof of Proposition 11.2.3 of \cite{LV}. First we define a filtration on  $B_{\alpha}A$ and $B_{\alpha}A'$ by defining $F_pB_{\alpha}A=\mathcal{C}_{\leq p}(A)$, where $\mathcal{C}_{\leq p}$ is the subcooperad of $\mathcal{C}$ consisting of elements of homological degree less or equal than $p$.  It is easy to check that this is indeed a filtration and that the filtration is stable under the bar differential. Recall that the bar differential splits as $d_{B}=d_1+d_2+d_3$, where $d_1$ is the differential coming from $d_A$, the differential of $A$, $d_2$ is the differential coming from the multiplication of $A$ and $d_3$ is the differential coming from $d_{\C}$, the differential of $\mathcal{C}$. Since $\mathcal{C}$ and $\mathcal{P}$ are non-negatively graded, it is easy to check that the differentials respect the filtration in the following way,  $d_1:F_pB_{\alpha}A \rightarrow F_pB_{\alpha}A$, $d_2:F_pB_{\alpha}A \rightarrow F_{\leq p-1}B_{\alpha}A$ and $d_3:F_pB_{\alpha}A \rightarrow F_{p-1}B_{\alpha}A$. The $E^0$-page of the corresponding spectral sequence is then given by $E^0_{p,q}B_{\alpha}A\cong(\mathcal{C}_p(A))_{p+q}$ and the differential is given by $d_1$. Since we assumed that all coalgebras are conilpotent, this filtration is increasing and bounded below, the spectral sequence therefore converges.

Since we assumed that $\mathcal{C}(n)_p$ is projective as an $\S_n$-module, we can now apply the K\"unneth Theorem (see \cite{MacLaneH} Theorem V.10.1), to compute the $E^1$-page of this spectral sequence. The K\"unneth Theorem states that if $\mathcal{C}$ is projective as an $\S_n$-module, then $H_*(\mathcal{C}_d(A))$ is isomorphic to $H_*(\oplus_{n\geq 1} \mathcal{C}(n)_p \otimes_{\S_n} A^{\otimes n})$ which is by the K\"unneth formula isomorphic to $\oplus_{n\geq 1} \mathcal{C}(n)_p \otimes_{\S_n} H_*(A^{\otimes n})$. Since we are working over a field, $H_*(A^{\otimes n})$ is isomorphic to $H_*(A)^{\otimes n}$. The $E^1$-page therefore becomes $E^1_{p,q}B_{\alpha}A=(\mathcal{C}_p(H_*(A))_{p+q}$. Since $A$ and $A'$ are quasi-isomorphic, the induced map $E^1f$ is now an isomorphism between $E^1B_{\alpha}A$ and $E^1B_{\alpha}A'$. Because both spectral sequences converge we can apply the Classical Convergence Theorem 5.5.1 of \cite{Wei1}, which implies that the map $B_{\alpha}f:B_{\alpha}A \rightarrow B_{\alpha}A'$ is a quasi-isomorphism. This proves the proposition.
\end{proof}

\begin{remark}
Similar to the bar construction we can also define the cobar construction. The cobar construction will not be needed for the results of this paper and we will therefore not explicitly describe it here. Similarly it will also be possible to define the bar and cobar construction for algebras with divided symmetries, this will also not be relevant for this paper and we therefore choose to ignore it.

\end{remark}









\section{$\E_n$- and $\E_{\infty}$-algebras}\label{secEnalgebras}

In this section we recall the definitions and basic facts about the theory of $\E_n$- and $\E_{\infty}$-algebras which we use in this paper. In general an $\E_{\infty}$-operad is an operad that is weakly equivalent to $\mathcal{COM}$, the operad encoding commutative algebras. In this paper we will use two specific $\E_{\infty}$-operads  called the Barratt-Eccles operad and the surjection operad. The Barratt-Eccles operad has several technical advantages coming from the fact that it comes from an operad in simplicial sets. It also has the advantage that the Koszul duality for the Barratt-Eccles operad has been described in great detail in \cite{Fres3}. The advantage of the surjection operad is that it is a bit smaller than the Barratt-Eccles operad and that the action of the surjection operad on the normalized cochains of a simplicial set is easier to describe. Most of this section is based on \cite{Fres3} and \cite{BF1}, see also \cite{MS4}.





\begin{remark}
Since we are using a homological grading, the cochains of a space will be concentrated in negative degrees, we will however still call them the  cochains on that space.
\end{remark}


\subsection{The Barratt-Eccles operad}

We will ow describe the Barratt-Eccles operad. This $\E_{\infty}$-operad has several technical advantages, the first advantage is that the Barratt-Eccles operad is combinatorial and that there is a clear description on how it acts on the cochains of a space. A second advantage is that it comes equipped with a filtration by suboperads callled $\E_n$-operads. The last advantage is that it is a Hopf operad, this allows us to take tensor products and has the consequence that the category of algebras over the Barratt-Eccles operad forms a model category. Most of this section is based on \cite{BF1}. For the applications of the $\E_n$-Hopf invariants we describe in this paper, the composition map will  not be important, we will therefore omit  a description of the composition map and refer the reader to \cite{BF1} for a detailed description of this map. 

 
\begin{definition}
 The Barratt-Eccles operad $\mathcal{E}_\infty$, is the operad given by the following $\S$-module, $\mathcal{E}_\infty(r)_d$, the arity $r$ degree $d$ part of the Barratt-Eccles operad,  is generated all by $(d+1)$-tuples of the form $(w_0,...,w_{d})$, where $w_i \in  \S_r$. We further require that if $w_i=w_{i+1}$ for some $i$, then the element is set to zero.  The action of an element $\sigma \in S_r$ on a tuple $(w_0,...,w_{d})$ is given by $\sigma (w_0,...,w_{d})= ( \sigma (w_0),...,\sigma (w_d))$. The differential is given by the classical formula by omitting elements, i.e.
\[ 
 d(w_0,...,w_d)=\sum_{i=0}^d (-1)^i(w_0,...,\hat{w}_i,...,w_d),
 \]
 where $\hat{w}_i$ means that we omit the $i$th element.
\end{definition}


One of the technical advantages of the Barratt-Eccles operad is that its algebras  form a model category (for the basics on model categories see for example \cite{DS1}).

\begin{theorem}[\cite{BF1}, Theorem 3.1.1]
The category of algebras over the Barratt-Eccles operad forms a model category in which the weak-equivalences are given by quasi-isomorphisms of the underlying chain complexes and the fibrations are the morphisms whose maps of underlying chain complexes are degree-wise surjective.
\end{theorem}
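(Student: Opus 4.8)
The plan is to obtain this model structure by right-transferring the projective model structure on chain complexes along the free--forgetful adjunction. Concretely, one considers
\[
F\colon \mathrm{Ch}_{\K}\ \rightleftarrows\ \E_{\infty}\text{-algebras}\ \colon U,
\qquad
F(M)=\bigoplus_{r\geq 1}\bigl(\E_{\infty}(r)\otimes M^{\otimes r}\bigr)_{\S_r},
\]
where $U$ is the forgetful functor and $F$ the free $\E_{\infty}$-algebra functor; under our connectedness convention $\E_{\infty}(0)=0$, so $\E_{\infty}$-algebras are non-unital and $F(0)=0$ is the initial object. On $\mathrm{Ch}_{\K}$ one uses the standard projective model structure, which over a field is cofibrantly generated, with generating cofibrations $I$ the inclusions $S^{n-1}\hookrightarrow D^{n}$ and generating trivial cofibrations $J$ the maps $0\to D^{n}$ (with $D^{n}$ the acyclic complex $\K\xrightarrow{\ \mathrm{id}\ }\K$ in degrees $n$, $n-1$). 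One then declares a map $f$ of $\E_{\infty}$-algebras to be a weak equivalence, resp.\ a fibration, precisely when $Uf$ is a quasi-isomorphism, resp.\ a degreewise surjection, and takes $F(I)$, $F(J)$ as the generating (trivial) cofibrations. The transfer theorem for cofibrantly generated model categories (see \cite{DS1}, or Hirschhorn's book) then produces the asserted model structure provided: (i) the category of $\E_{\infty}$-algebras is complete and cocomplete; (ii) $U$ preserves filtered colimits (so the small object argument applies); and (iii) every relative $F(J)$-cell complex is a quasi-isomorphism.

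Conditions (i) and (ii) are routine and would be disposed of quickly: limits of $\E_{\infty}$-algebras are created by $U$, colimits exist because $\E_{\infty}$-algebras are the algebras for an accessible monad, and $U$ commutes with filtered colimits since $F$ is built from tensor powers and coinvariants. The real content is (iii). Since the generating trivial cofibrations are the maps $0\to F(D^{n})$ whose domain is initial, a relative $F(J)$-cell complex over an $\E_{\infty}$-algebra $A$ is simply the inclusion $A\to A\amalg F(V)$, where $V$ is a (possibly infinite) direct sum of shifted acyclic complexes $D^{n}$, hence acyclic; by commuting homology with filtered colimits it suffices to treat this single coproduct. I would then invoke the standard filtration of the coproduct $A\amalg F(V)$ by the number of tensor factors landing in $V$ (see \cite{Fres2}, or Fresse's \emph{Modules over operads and functors}): filtration degree zero is exactly the subalgebra $A$, while the associated graded in positive filtration degree is a direct sum of terms of the form $\bigl(\E_{\infty}(r)\otimes(\text{tensor powers of }A\text{ and of }V)\bigr)_{\S_r}$ in which at least one tensor factor is $V$.

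Now comes the decisive input, and the reason the Barratt--Eccles operad is used: since $\E_{\infty}(r)=\mathrm{N}_{*}(W\S_r)$ is a complex of \emph{free} $\K[\S_r]$-modules, the coinvariants compute the homology of these subquotients correctly, and the K\"unneth theorem (\cite{MacLaneH}, Theorem V.10.1) applies exactly as in the proof of Proposition~\ref{propbarconstructionpreservesqi}. As $H_{*}(V)=0$, every subquotient carrying a $V$-factor is acyclic, so the positive-filtration part of $A\amalg F(V)$ is acyclic and $A\to A\amalg F(V)$ is a quasi-isomorphism. Passing to transfinite composites and filtered colimits settles (iii). One should also remark that every $\E_{\infty}$-algebra is fibrant (every chain complex surjects onto $0$), so no functorial path object is required and (iii) is the only nontrivial hypothesis of the transfer theorem. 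Finally, the explicit description of the fibrations follows from the transfer construction: they are the $F(J)$-injectives, and the adjunction identifies these with the maps sent by $U$ to degreewise-surjective chain maps. The main obstacle is step (iii) --- specifically, making the filtration on $A\amalg F(V)$ precise and running the $\S_r$-equivariant K\"unneth computation in characteristic $p$, where the freeness of $\E_{\infty}(r)$ over $\K[\S_r]$ is indispensable.
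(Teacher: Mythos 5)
The paper does not actually prove this statement---it is imported verbatim from Berger--Fresse \cite{BF1}---so your proposal has to be measured against the proof in the cited source rather than anything in the text. Your overall strategy (right transfer along the free--forgetful adjunction, with the levelwise $\K[\S_r]$-freeness of the Barratt--Eccles operad as the decisive input) is the standard one, and points (i), (ii) and the identification of the fibrations are fine. The genuine gap sits exactly where you place ``the real content'', in step (iii): for a general $\E_\infty$-algebra $A$ the associated graded of the filtration of $A\amalg F(V)$ by the number of $V$-factors is \emph{not} a sum of terms $\bigl(\E_\infty(r)\otimes(\text{tensor powers of }A\text{ and }V)\bigr)_{\S_r}$. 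Filtration degree $b$ is $\bigl(U(A)(b)\otimes V^{\otimes b}\bigr)_{\S_b}$, where $U(A)$ is the enveloping operad of $A$, i.e.\ a quotient of $\bigoplus_a\bigl(\E_\infty(a+b)\otimes A^{\otimes a}\bigr)_{\S_a}$ by relations coming from the multiplication of $A$. Before passing to the quotient these pieces are indeed $\S_b$-free, but a quotient of a free $\K[\S_b]$-module by an equivariant submodule need not be projective (in characteristic $2$ the trivial $\S_2$-module is such a quotient), and projectivity is exactly what your K\"unneth/coinvariants step uses: for $M=\K$ trivial and $V=D^1$ in characteristic $2$, the complex $(M\otimes V^{\otimes 2})_{\S_2}$ has nonzero homology even though $V$ is acyclic. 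So the argument as written proves the acyclicity of the cell attachment only when the base $A$ is itself free (or cofibrant), which is precisely why this route in general yields only a semi-model structure; to complete it you would need the nontrivial additional fact that the enveloping operad of an \emph{arbitrary} algebra over a $\S$-free operad is again levelwise $\S$-projective.

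The proof in \cite{BF1} (and in Berger--Moerdijk's admissibility theorem for $\S$-cofibrant operads) sidesteps this analysis: because the Barratt--Eccles operad is a Hopf operad and $\N(\Delta^1)$ is an algebra over it, $A\otimes\N(\Delta^1)$ is a functorial path object, every algebra is fibrant, and Quillen's path-object form of the transfer theorem then gives the full model structure with no study of pushouts of free maps at all. This is what the present paper alludes to when it lists the Hopf structure as the reason the category of algebras ``forms a model category''; ironically, the functorial path object you dismiss as unnecessary is exactly the device that repairs, or rather replaces, your step (iii). So either patch (iii) with the enveloping-operad statement (with a reference to Fresse's or Berger--Moerdijk's treatment of coproducts $A\amalg F(V)$), or adopt the path-object argument of the cited source.
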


As an example we will now describe the arity $2$ part of the Barratt-Eccles operad. Denote by $Id$ and $\tau$ the two elements of $\S_2$. In degree $d$, the space $\mathcal{E}(2)_d$ is two dimensional and given by the sequences $(Id,\tau,Id,\tau,...)$ and $(\tau,Id,\tau,Id,...)$, where both sequences have length $d+1$.

The Barratt-Eccles operad comes with a filtration by a sequence of suboperads called the $\mathcal{E}_n$-operads. Topologically, this filtration  corresponds to the little $n$-cubes filtration of the topological $\E_{\infty}$-operad. The main reason we need these $\mathcal{E}_n$-suboperads is because the sphere $S^n$ is not formal as an $E_{\infty}$-algebra, but the sphere $S^n$ is formal as an $\mathcal{E}_m$-algebra for $m<n$.

To define the filtration of the Barratt-Eccles operad we will assign a natural number to each element of the Barratt-Eccles operad, this number is  the number of variations in the element. The $\mathcal{E}_n$-operad is then defined as  the suboperad consisting of all elements of $\E_\infty$ which have less than $n$ variations.

Let $(w_0,...,w_d)\in \mathcal{E}(r)_d$ with $w_i\in \S_r$, then we can represent every permutation $w_i$ by the sequence of its values. Denote by $w_i \vert_{j,k}$ the permutation of $j$ and $k$ defined by restricting the sequence to $j$ and $k$. 

We define the number of variations $\mu_{j,k}$ of $j$ and $k$ for an element $(w_0,...,w_d)$ as the number of times that $w_i\vert_{j,k} \neq w_{i+1}\vert_{j,k}$ in the sequence $(w_0\vert_{j,k},...,w_d\vert_{j,k})$. The $\mathcal{E}_n$-operad is defined as the subspace of $\mathcal{E}_\infty$ spanned by all elements such that $\mu_{j,k} <n$ for all $j$ and $k$.

 As is shown in \cite{Fres3}, the $\mathcal{E}_1$-operad is the associative operad.




For more details and examples see Section 0.1.3 of \cite{Fres3}  or Section 1.6 \cite{BF1}. The following lemma will be important in Section \ref{secEnhopfinv} and can be found as Lemma 3.3.4 in \cite{Fres3}.

\begin{lemma}\label{lemenisbounded}
The $\mathcal{E}_n$-operad is finite dimensional in every arity, i.e. $\mathcal{E}_n(r)$ is a finite dimensional chain complex. In particular the underlying chain complex is concentrated in   degrees greater or equal than $0$ and less than $nr(r-1)/2$. In particular, the arity two component $\E_n(2)_d$ of the $\E_n$-operad is only non-zero when $0 \leq d \leq n-1$.
\end{lemma}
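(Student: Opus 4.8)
The plan is to derive the stated degree bound by a simple counting argument over all pairs of inputs, after which finite-dimensionality is immediate. First I would recall that, by definition, a nonzero element of $\E_\infty(r)_d$ is a $(d+1)$-tuple $(w_0,\dots,w_d)$ with each $w_i\in\S_r$ and $w_i\neq w_{i+1}$ for all $i$; there are at most $(r!)^{d+1}$ such tuples, so each graded piece $\E_\infty(r)_d$ is already finite-dimensional, and it therefore suffices to show that $\E_n(r)_d$ vanishes outside the range $0\le d<nr(r-1)/2$.

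The key observation I would use is that each consecutive transition $w_i\mapsto w_{i+1}$ contributes to at least one pairwise variation. Indeed, a permutation of $\{1,\dots,r\}$ is determined by the relative order it induces on every two-element subset $\{j,k\}$, so $w_i\neq w_{i+1}$ forces $w_i\vert_{j,k}\neq w_{i+1}\vert_{j,k}$ for at least one pair $\{j,k\}$. Summing the variation counts $\mu_{j,k}$ over all $r(r-1)/2$ pairs and reorganizing this double sum as a sum over the $d$ transitions, I get
\[
\sum_{1\le j<k\le r}\mu_{j,k}(w_0,\dots,w_d)\ \geq\ d .
\]
On the other hand, if $(w_0,\dots,w_d)$ represents a nonzero element of the suboperad $\E_n(r)$, then $\mu_{j,k}<n$, i.e.\ $\mu_{j,k}\le n-1$, for every pair, so the same sum is at most $(n-1)r(r-1)/2$. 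Combining the two estimates gives $d\le (n-1)r(r-1)/2<nr(r-1)/2$ (for $r\ge 2$), and since the Barratt--Eccles operad is concentrated in non-negative degrees we also have $d\ge 0$. Hence $\E_n(r)$ lives in finitely many degrees, each of them finite-dimensional, so it is finite-dimensional. Specializing to $r=2$ the bound reads $d\le n-1$, which is the final assertion, and the degenerate arities $r=0,1$ (where $\E_n(0)=0$ and $\E_n(1)=\K$ is concentrated in degree $0$) are checked directly.

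I do not expect a genuine obstacle here: the argument is elementary once one sees the right quantity to count. The only points that need (routine) care are the fact that a permutation is reconstructed from its pairwise restrictions—which is precisely what makes every transition register in at least one $\mu_{j,k}$—and the bookkeeping that allows swapping the order of summation between pairs $\{j,k\}$ and transitions $w_i\mapsto w_{i+1}$; the small-arity cases are trivial but worth mentioning for completeness.
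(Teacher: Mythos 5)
Your argument is correct. One point of comparison: the paper does not prove this lemma at all --- it simply cites Lemma 3.3.4 of Fresse's book on Koszul duality of $\E_n$-operads --- so your double-counting argument supplies a self-contained proof where the paper relies on a reference. The key steps all check out: a permutation of $\{1,\dots,r\}$ is recovered from the order of appearance of each pair $\{j,k\}$ in its sequence of values, so every transition $w_i\neq w_{i+1}$ in a nonzero normalized tuple registers in at least one restriction $w_i\vert_{j,k}\neq w_{i+1}\vert_{j,k}$; summing $\mu_{j,k}$ over the $r(r-1)/2$ pairs and exchanging the order of summation gives $d\leq \sum_{j<k}\mu_{j,k}\leq (n-1)r(r-1)/2$ for elements of $\E_n(r)_d$, which is in fact slightly sharper than the stated bound $d<nr(r-1)/2$ and specializes to $d\leq n-1$ in arity $2$ exactly as required; finite-dimensionality in each degree is clear from the $(r!)^{d+1}$ count. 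Your explicit handling of $r=0,1$ is also appropriate, since for $r=1$ the displayed bound $d<nr(r-1)/2=0$ has to be read with the convention $\E_n(1)=\K$ in degree $0$ in mind, an edge case the lemma statement glosses over. In short, the proposal is a valid and arguably more informative substitute for the citation.
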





\subsection{The surjection operad}

In this section we will recall the definition of the surjection operad $\mathcal{X}_\infty$.  Let $u:\{1,...,r+d\} \rightarrow \{1,...,r\}$ be a surjective map, then we can describe $u$ by its image $(u(1),...,u(r+d))$, which is a sequence of the numbers $\{1,...,r\}$ of length $r+d$, such that each number in the set $\{1,..,r\}$ appears at least once. We will call a surjection $u$ non-degenerate if there are no repetitions in this sequence, i.e. if $u(n) \neq u(n+1)$ for all $n\in \{1,...,r+d-1\}$.

As an $\S$-module, the arity $r$ degree $d$  part of the surjection operad $\mathcal{X}_\infty(r)_{d}$ is spanned by all non-degenerate surjections $u:\{1,...,r+d\} \rightarrow \{1,...,r\}$. The action of $\S_r$ is given by permuting the image of $u$, i.e. $\sigma u$ is the sequence $(\sigma (u(1)),...,\sigma (u(r+d)))$. The differential is given by 
\[
\partial (u(1),...,u(r+d))= \sum_{i=1}^{r+d}\pm(u(1),..., \widehat{u(i)},...,u(r+d)),
\]
where $\widehat{u(i)}$ means that we omit the $i$th element in the sequence. For the definition of the sign see Section 1.2.3 of \cite{BF1}. If the element $u(i)$ only appears once in the sequence $u$, then we set the term where we omit $u(i)$ in the differential to zero.  We will not describe the composition maps in this paper since they will not be relevant for the applications. 

Similar to the Barratt-Eccles operad the surjection operad, also has a filtration which is defined as follows. For each surjection $u:\{1,...,r+d\} \rightarrow \{1,...,r\}$ and numbers $i<j$ we define  $u_{i,j}$ as the subsequence of $u$ formed by the occurrences of $i$ and $j$. Let $\mu_{i,j}$ be the number of variations in the sequence $u_{i,j}$. 

The  filtration $\X_n$ of $\X_\infty$ is defined as the subspace of $\X_\infty$ consisting of all surjections $u$, such that $u_{i,j}$ has less than $n$ variations for all $i$ and $j$.

In \cite{BF1}, Berger and Fresse define a morphism of operads from the Barratt-Eccles operad to the surjection operad called the table reduction morphism. The properties of this morphism are stated in the following lemma.

\begin{lemma}[\cite{BF1}, Lemma 1.6.1]
The table reduction morphism $TR:\E_{\infty} \rightarrow \X_\infty$ is a surjective quasi-isomorphism of operads, it preserves the filtrations and induces  quasi-isomorphisms $TR_n:\E_n \rightarrow \X_n$ for all $n>0$. 
\end{lemma}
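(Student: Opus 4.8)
The plan is to reduce the statement to four claims about the table reduction map $TR$ and to verify them in turn. Recall first the explicit description of $TR$: an element $(w_0,\dots,w_d)\in\E_\infty(r)_d$ is displayed as a table whose $i$-th row lists the values of the permutation $w_i$, and $TR(w_0,\dots,w_d)$ is the signed sum, over all ways of reading a positive number of still-available entries off each successive row $w_0,\dots,w_d$ (a value being deleted from the remaining rows as soon as it occurs for the last time in the output word being built, degenerate outputs being discarded, with signs as fixed in Section~1.2.3 of \cite{BF1}), of the resulting non-degenerate surjection $\{1,\dots,r+d\}\to\{1,\dots,r\}$. It then suffices to show: (i) $TR$ is a morphism of operads (in particular a chain map); (ii) $TR$ carries $\E_n$ into $\X_n$, hence restricts to maps $TR_n\colon\E_n\to\X_n$; (iii) $TR$ and every $TR_n$ are surjective; (iv) $TR$ and every $TR_n$ induce isomorphisms in homology in each arity.

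For (i) I would match terms directly: deleting the value $u(i)$ from every surjection occurring in $TR(w_0,\dots,w_d)$ and cancelling the terms that differ only in which row supplied the last occurrence of $u(i)$ reproduces $TR$ applied to $\sum_i(-1)^i(w_0,\dots,\widehat{w_i},\dots,w_d)$, so $TR$ is a chain map; compatibility with operadic composition is the analogous, longer bookkeeping on tables carried out in \cite{BF1}. For (ii), if $(w_0,\dots,w_d)$ has fewer than $n$ variations between every pair $j,k$, then in any surjection $u$ appearing in $TR(w_0,\dots,w_d)$ the subsequence $u_{j,k}$ is assembled from the restricted rows $w_0\vert_{j,k},\dots,w_d\vert_{j,k}$ without creating new changes of value, so $\mu_{j,k}(u)<n$; thus $TR(\E_n)\subseteq\X_n$. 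For (iii), given a non-degenerate surjection $u$, I would write down an explicit preimage table — one whose successive rows record the state of $u$ after each maximal block — whose table reduction equals $u$ plus surjections strictly smaller in a fixed well-ordering of the reading schemes, and then induct on that ordering to produce an exact preimage; since this construction only uses rows already present in $\E_n$ whenever $u\in\X_n$, it simultaneously proves surjectivity of $TR_n$.

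The substantive step is (iv). For $TR$ itself this is easy: $\E_\infty(r)$ is the normalized chain complex of $E\S_r$, hence acyclic with $H_0=\K$; one checks that $\X_\infty(r)$ is likewise acyclic with $H_0=\K$ (directly via a contracting homotopy, or from (iii) together with acyclicity of the kernel of $TR$), and since $TR$ sends a one-row table $(w_0)$ to the permutation $w_0$ regarded as a surjection it is the identity on $H_0$, so $TR$ is an arity-wise quasi-isomorphism. For finite $n$ neither side is acyclic, and here the plan is to filter $\E_n(r)$ and $\X_n(r)$ by number of variations — a filtration preserved by $TR_n$ thanks to (ii) and bounded because $\E_n(r)$ and $\X_n(r)$ are finite dimensional by Lemma~\ref{lemenisbounded} — and to show that $TR_n$ induces an isomorphism on the associated graded complexes, hence on the $E^1$-pages of the two spectral sequences, hence, by convergence, on homology. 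Making this precise amounts to identifying the associated graded of each filtered complex with an explicit small model for the homology of the configuration space of $r$ points in $\R^n$ and verifying that $TR_n$ realizes the standard comparison of these two models; this compatibility of combinatorial models is the main obstacle, and is the technical core of the argument in \cite{BF1}.
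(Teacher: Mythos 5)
This lemma is not proved in the paper at all: it is imported verbatim from Berger--Fresse, so the only thing to compare your sketch with is the original argument of \cite{BF1}. Measured against that, your steps (i)--(iii) are reasonable and close in spirit to what Berger and Fresse actually do (term-by-term matching for the chain-map/operad-morphism property, a restriction argument for preservation of the filtration, explicit preimage tables for surjectivity), and your argument for $TR$ itself being a quasi-isomorphism via acyclicity of $\E_\infty(r)$ and $\X_\infty(r)$ is fine.

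The genuine gap is in (iv) for finite $n$, which is the entire content of the statement ``$TR_n:\E_n\rightarrow\X_n$ is a quasi-isomorphism.'' You propose filtering by number of variations and comparing spectral sequences, but the assertion that $TR_n$ induces an isomorphism on the associated graded is not easier than the original claim, and you yourself concede that identifying the associated graded pieces with models for the homology of configuration spaces ``is the technical core of the argument in \cite{BF1}'' --- i.e.\ you defer precisely the step that makes the lemma true, so as a blind proof this does not close. For the record, the route in Berger's and Berger--Fresse's work is not an $E^1$-identification with configuration-space homology but a cellular one: both $\E_n(r)$ and $\X_n(r)$ are cellular complexes indexed by the poset of complete graphs with levels, each cell is acyclic, and $TR$ is compatible with this cell structure, so the quasi-isomorphism is obtained cell by cell and then assembled over the poset. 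Without either that cellular comparison or some substitute proof of your $E^1$-isomorphism, step (iv) remains an unproved assertion rather than a proof.
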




\subsection{The normalized cochains on a simplicial set}\label{seccochainEinftyalg}

The main reason we are interested in $\E_{\infty}$-algebras is because the singular chains on a topological space and the reduced normalized chains on a simplicial set form an $\mathcal{E}_\infty$-coalgebra and dually the cochains on a space form an $\E_\infty$-algebra.


\begin{theorem}[\cite{BF1}, Theorem 2.1.1]\label{thrmBergerFresse}
 Let $X$ be a simplicial set, then the reduced normalized cochain complex $\N(X)$ is an algebra over the surjection operad $\X_\infty$. Because of the table reduction morphism $\N(X)$ will also naturally be an algebra over the Barratt-Eccles operad $\mathcal{E}_\infty$.
\end{theorem}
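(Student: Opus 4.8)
This is Theorem~2.1.1 of \cite{BF1}, so in practice one cites that reference; what follows sketches the argument behind it. The plan is first to reduce the Barratt--Eccles assertion to the surjection assertion, and then to build the surjection action by hand. The reduction is purely formal: since the table reduction morphism $TR\colon\mathcal{E}_\infty\to\X_\infty$ recalled above is a morphism of operads, restricting the structure maps of any $\X_\infty$-algebra along $TR$ makes it an $\mathcal{E}_\infty$-algebra, and this restriction is compatible with arbitrary natural transformations in $X$. Hence it suffices to equip $\N(X)$ with a $\X_\infty$-algebra structure, naturally in $X$. (One could instead argue in the opposite order, producing the Barratt--Eccles action first from the identification of $\mathcal{E}_\infty(r)$ with the normalized chains of the translation category $E\S_r$, the Eilenberg--Zilber equivalence, and the freeness and contractibility of $E\S_r$, and then deducing the surjection action through $TR$; but the order in the statement is the one I would follow.)

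To build the $\X_\infty$-action I would write down the Berger--Fresse interval-cut operations explicitly. Given a non-degenerate surjection $u=(u(1),\dots,u(r+d))\in\X_\infty(r)_d$ and cochains $a_1,\dots,a_r\in\N(X)$, the operation $u(a_1,\dots,a_r)$ is the cochain of degree $\sum_i\deg(a_i)-d$ (cohomological degrees) whose value on a simplex $\sigma$ of dimension $N=\sum_i\deg(a_i)-d$ is the signed sum, over all sequences of vertices $0=p_0\le p_1\le\dots\le p_{r+d}=N$ of $\sigma$, of the product $\prod_{i=1}^{r}\langle a_i,\sigma_i\rangle$, where $\sigma_i$ is the face of $\sigma$ assembled, in the order dictated by $u$, from the consecutive blocks $\sigma|_{[p_{k-1},p_k]}$ with $u(k)=i$; summands in which some $\sigma_i$ is degenerate are discarded, and the signs are the Koszul signs for the reordering. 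Since a map of simplicial sets carries faces of $\sigma$ to faces of its image and commutes with evaluation, this formula is visibly natural in $X$, so naturality costs nothing.

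It then remains to verify, roughly in this order: (i) the formula descends from unnormalized to normalized cochains --- this is exactly where non-degeneracy of $u$ is used, and why degenerate faces must be killed; (ii) $\S_r$-equivariance, which matches the permutation action of $\S_r$ on the values of $u$; (iii) unitality, with the identity surjection $(1)\in\X_\infty(1)_0$ acting as the identity (and, as a consistency check, the arity-$2$ degree-$0$ surjection $(1,2)$ recovering the Alexander--Whitney cup product); (iv) compatibility with the operadic composition of $\X_\infty$ (for which I would refer to \cite{BF1}), i.e. that substituting one surjection into another corresponds to iterating interval cuts; and (v) that the structure map is a chain map, i.e. the operadic differential of $\X_\infty$ --- the alternating sum over deletions of a single entry of $u$ --- is intertwined with the internal differentials of $\N(X)$ and $\N(X)^{\otimes r}$ via the Leibniz rule. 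Step (v) is the main obstacle: deleting an entry of $u$ collapses or merges blocks in the interval-cut formula, and one must check that the terms so produced cancel precisely against the Leibniz contributions of the simplicial coboundary acting on each $\sigma_i$, with all signs accounted for. Items (i)--(iv) are essentially combinatorial bookkeeping; the genuine difficulty is the uniform control of signs throughout, together with the chain-map identity (v).
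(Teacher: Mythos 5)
Your sketch is correct and matches how the paper treats this statement: the result is simply quoted from Berger--Fresse \cite{BF1}, and the paper's own recollection of the structure is exactly the interval-cut action you describe (stated there as a coalgebra structure on normalized chains and then dualized to cochains by evaluation), with the Barratt--Eccles case obtained by restriction along the table reduction morphism, just as in your reduction step. So the proposal is essentially the same approach, with the remaining verifications (equivariance, composition compatibility, the chain-map identity) rightly deferred to \cite{BF1}.
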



We will now briefly recall how the action of the surjection operad on the normalized cochains is defined. We will be fairly brief and more details can be found in \cite{BF1} (see also \cite{MS4}).

According to Section 2.2 of \cite{BF1}, the normalized chains on a simplicial set $X$ form a coalgebra over the surjection operad (see \cite{BF1} Section 2.1.2 for the definition of a coalgebra over an operad). We will now recall this coalgebra structure and explain how it induces an algebra structure on the normalized cochains.  To describe this coalgebra structure we first recall the definition of the interval cut operation.

To define the interval cut operation we  first define it on the standard simplex and then we extend it to general simplices in a general simplicial set by taking the image of the cut simplices. Let $\Delta^n$ be the $n$-dimensional simplex and let $u:\{1,...,r+d\} \rightarrow \{1,...,r\}$ be a non-degenerate surjection. We define for $k \in \{1,..,r\}$ a set of subsimplices $\Delta(C_{(k)})$ of $\Delta^n$ as follows.

First we cut the interval $[0,...,n]$ in $r+d$ pieces, such that two consecutive pieces overlap in exactly one point. The pieces of this cut are specified by the end points of the interval and the $(r+d-1)$-points where we cut the interval. Note that the pieces are allowed to have length zero. So we get a sequence of integers
\[
0 =n_0 \leq n_1 \leq ... \leq n_{r+d-1} \leq n_{r+d}=n.
\]
The set $C_{(k)}$ is then defined as the union of the intervals $[n_{i-1},n_i]$ for all $i$ such that $u(i)=k$. The simplex $\Delta(C_{(k)})$ is then defined as the subsimplex of $\Delta^n$ which has vertices given by the set $C_{(k)}$. The cooperation associated to $u$ is then given by 
\[
u:\NN(\Delta^n) \rightarrow \NN(\Delta^n)^{\otimes r}
\] 
\[
u(\Delta^n)=\sum \pm\Delta(C_{(1)}) \otimes .... \otimes \Delta(C_{(r)}),
\]
where the sum ranges over all possible ways to cut the interval $[0,...,n]$ in $r+d$ pieces. Note that whenever a set $C_{(k)}$ is degenerate, i.e. some element appears twice in $C_{(k)}$, we will set the corresponding simplex to zero in $\NN(\Delta^n)$.

We extend this coproduct to a general simplicial set as follows. Let $X$ be a simplicial set and $x:\Delta^n \rightarrow X$ be a non-degenerate simplex. Then we define 
\[
u:\NN(X)\rightarrow \NN(X)^{\otimes r}
\]
by
\[
u(x)=\sum \pm x(\Delta(C_{(1)})) \otimes ... \otimes x(\Delta(C_{(r)})),
\]
where $x(\Delta(C_{(k)}))$ is the image of the restriction of $x$ applied to the subsimplex $\Delta(C_{(k)})\subseteq \Delta^n$.




Using this coalgebra structure on the normalized chains, we can define an $\X_\infty$-algebra structure on the normalized cochains as follows. Let $\phi_1,...,\phi_r \in \N(X)$ and $u:\{1,...,r+d\} \rightarrow \{1,...,r\}$ be a non-degenerate surjection. Then we define an operation of degree $d$
\[
u:\N(X)^{\otimes r} \rightarrow \N(X),
\]
by evaluating the $\phi_i$ on the coproduct of a simplex $x$. More precisely if $x:\Delta^n\rightarrow X$ is a non-degenerate simplex, then the evaluation of $u(\phi_1,...,\phi_r)$ on $x$ is defined by
\[
u(\phi_1,..,\phi_r)(x)=\sum \pm \phi_1 \otimes ... \otimes \phi_r(x(\Delta(C_{(1)})) \otimes ... \otimes x(\Delta(C_{(r)}))),
\]
where the sum runs again over all possible ways to cut the interval $[0,...,n]$ in $r+d$ pieces. Note that when the degree of $\phi_k$ does not match the dimension of $x(\Delta(C_{(k)}))$, this evaluation is equal to zero. By abuse of notation we will denote both the operation and cooperation associated to $u$ by $u$. From the context it should be clear which one we mean.

\subsubsection{Signs associated to the interval cut operation}

In Section \ref{secsuspensions}, we need explicit formulas for the signs associated to the coproduct associated to a surjection $u \in \X_{\infty}(r)_d)$. We recall here from Section 2.2.4 from \cite{BF1} how to associate a sign to the operation
\[
u(\Delta^n)=\sum \pm\Delta(C_{(1)}) \otimes .... \otimes \Delta(C_{(r)}).
\]
The sign $\pm$ consists of two parts, the first part is called the position sign and the second part the permutation sign.  

To describe these signs let $0\leq n_1 \leq ... \leq n_{r+d-1}\leq n$ be the points where the interval is cut. To these values we associate $r+d$ intervals $[n_{i-1},n_i]$. There are two different types of intervals, an interval $[n_{i-1},n_i]$ is called a final interval if $u(i)$ is the last occurrence of the value $u(i)$ in the sequence $(u(1),...,u(r+d))$ and all the non-final intervals are called inner intervals.

The length of an inner interval $[n_{i-1},n_i]$ is defined as $n_i-n_{i-1}+1$ and the length of an outer interval $[n_{i-1},n_i]$ is defined as $n_i-n_{i-1}$. The permutation sign is then defined as follows.  Consider the shuffle that permutes the sequence $(u(1),u(2),...,u(r+d))$ to the sequence $(1,..,1,2,...,2,...,r,...,r)$. The permutation sign is then defined as the sign produced by the Koszul sign rule by interchanging the intervals according to this shuffle, where the degree of an interval is defined as its length.

The position sign is defined as the sum of the end points of the inner intervals. So if the interval $[n_{i-1} ,n_i]$  is inner then it contributes a $(-1)^{n_i}$ to the position sign. The total sign is defined as the product of the permutation sign and the position sign.

\subsection{Spheres}

One of the reasons why $p$-adic homotopy theory is much more difficult than rational homotopy theory, is because in $p$-adic homotopy theory the one-dimensional model for the cohomology of the $m$-dimensional sphere is not a one-dimensional chain complex concentrated in degree $-m$ with the trivial $\mathcal{E}_\infty$-algebra structure, but has some non-trivial higher operations. One well known example of such a non-zero operation is the zeroth Steenrod square, which is always equal to the identity. 

These non-zero operations have the consequence that $S^m$ is not formal as an $\mathcal{E}_\infty$-algebra, but is only formal as an $\mathcal{E}_n$-algebra with $n<m$.  In this paper we avoid the problem of the non-trivial $\E_{\infty}$-algebra structure on $S^m$, by only considering $S^m$ as an $\E_n$-algebra with $n<m$.

In Section 3.2 of \cite{BF1}, Berger and Fresse give explicit formulas for the $\E_{\infty}$-algebra structure on the sphere $S^m$. Since we do not need the explicit formulas in paper we will only state a simplified version of their results.


\begin{theorem}[\cite{BF1}, Proposition 3.2.5]\label{thrmspheresaresomethimesformal}
 The one dimensional model for the sphere $S^m$ is non-trivial as an $\E_{\infty}$-algebra, but is trivial as an $\mathcal{E}_n$-algebra for $n <m$.
\end{theorem}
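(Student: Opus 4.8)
The plan is to realise $S^m$ by its smallest simplicial model $\Delta^m/\partial\Delta^m$. Then the reduced normalized cochain complex $\N(S^m)$ is itself one-dimensional: it is $\K\iota$, concentrated in degree $-m$, where $\iota$ is dual to the fundamental $m$-simplex, with zero differential. Since $\E_\infty$-cochains send weak equivalences to $\E_\infty$-quasi-isomorphisms, this $\N(S^m)$, equipped with its $\X_\infty$-algebra structure from Theorem \ref{thrmBergerFresse} and the induced $\E_\infty$- and $\E_n$-structures, represents the one-dimensional model occurring in the statement. So it suffices to (i) compute the operations $u(\iota^{\otimes r})$ for $u$ in the surjection operad $\X_\infty$, and (ii) transport the conclusion to $\E_n$ along the table reduction morphism $TR$.

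For the non-triviality, observe first that an operation $u\in\X_\infty(r)_d$ can send $\iota^{\otimes r}$ to a nonzero multiple of $\iota$ only when $d=(r-1)m$, for degree reasons. Take $r=2$ and $u=\smile_m$, the cup-$m$ product, i.e. the non-degenerate surjection $(1,2,1,2,\dots)$ of length $m+2$ (degree $d=m$). Evaluating its interval-cut operation on the fundamental $m$-simplex $x$, a block $x(\Delta(C_{(k)}))$ contributes only when $C_{(k)}$ recovers all of the vertices $0,\dots,m$, and one checks that both blocks do so for exactly one cut, namely $0=n_0=n_1<n_2=1<\dots<n_{m+1}=m=n_{m+2}$. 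Hence $\smile_m(\iota,\iota)=\pm\iota\neq 0$. Lifting $\smile_m$ along the surjective quasi-isomorphism $TR:\E_\infty\to\X_\infty$ produces an $\E_\infty$-operation acting the same way on $\K\iota$, so the $\E_\infty$-structure on the one-dimensional model is not the trivial one; since $\K\iota$ has zero differential it is already a minimal model, and no rescaling $\iota\mapsto c\iota$ can make this operation vanish, so the structure is not $\E_\infty$-quasi-isomorphic to the trivial one either (equivalently, $S^m$ is not $\E_\infty$-formal).

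For triviality as an $\E_n$-algebra with $n<m$: by Lemma \ref{lemenisbounded} (and its analogue for $\X_n$) one has $\E_n(2)_d=\X_n(2)_d=0$ for $d\geq n$, and a nontrivial binary operation on $\iota$ must lie in degree $m\geq n$, so all binary operations vanish. For arities $r\geq 3$ this degree bound is insufficient, and one must inspect the interval-cut coproduct itself. Analysing which cuts of the fundamental $m$-simplex make every block $\Delta(C_{(k)})$ equal to all of $\Delta^m$, one finds the configuration is rigid: every one of the $r+(r-1)m$ pieces has length $0$ or $1$, the $m$ pieces of length one are $[0,1],[1,2],\dots,[m-1,m]$ in that order, and the surjection $u$ decomposes as a chain of $m+1$ permutation words of $\{1,\dots,r\}$, glued successively along single letters, with consecutive glue-letters distinct. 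A variation count then forces $\mu_{i,j}(u)\geq m$ for some pair $i<j$, so $u\notin\X_n$ whenever $n\leq m$. Therefore the $\X_n$-algebra structure on $\N(S^m)$ is trivial, and since the $\E_n$-structure is obtained from it by restriction of structure along $TR_n:\E_n\to\X_n$, the $\E_n$-algebra structure on the one-dimensional model of $S^m$ is trivial for every $n<m$.

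The main obstacle is the combinatorial lemma in the previous paragraph: that a cut whose blocks all recover every vertex forces the chain-of-permutation-words structure, and that such a word has at least $m$ variations for some pair of indices. This is precisely the content of the proof of Proposition 3.2.5 of \cite{BF1}, to which we refer for the details; the remaining ingredients (the choice of minimal model, the degree bookkeeping, the computation of $\smile_m(\iota,\iota)$, and the transfer along table reduction) are routine. Alternatively, once the suspension morphism $\sigma:\E_{n+1}\to\Sus^{-1}\E_n$ and Theorem \ref{theorem2} are available, one may iterate the identification $\N(\Sigma X)\cong\sigma^{-1}\N(X)$ to exhibit $\N(S^m)$ as an $m$-fold operadic desuspension of $\N(S^0)=\K$ and read off that these desuspensions carry the (filtration-one) product of $\N(S^0)$ out of filtration $m$.
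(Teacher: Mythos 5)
The paper itself gives no proof of this statement—it imports it verbatim as \cite{BF1}, Proposition 3.2.5—and your proposal ultimately rests on that same citation for the only genuinely hard step (the vanishing of the arity $\geq 3$ interval-cut operations on the one-dimensional model), so it takes essentially the same route. The parts you do work out explicitly—the choice of the minimal simplicial model, the degree constraint $d=(r-1)m$, the computation $\iota\cup_m\iota=\pm\iota$ establishing non-triviality, and the exclusion of binary operations via the bound of Lemma \ref{lemenisbounded}—are correct.
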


\begin{remark}
Another nice interpretation of the $\E_\infty$-algebra structure of $\N(S^m)$ can be found in the epilogue of \cite{Fres3}.
\end{remark}

\subsection{Koszul duality for $\mathcal{E}_n$-operads: Part 1}\label{subsecKoszulduality1}

In this section we will recall some of the results about the self Koszul duality of the $\E_n$-operads, in Section \ref{subsecKoszulduality2} we go into these results in more detail. The main result of \cite{Fres3} is that, up to a shift, the $\E_n$-operad is self Koszul dual. This will be an important result we use in this paper.

\begin{theorem}[\cite{Fres3}, Theorem  B]
 There exist quasi-isomorphisms $\psi_n:\Omega (\D_n)\rightarrow \E_n$, where $\D_n=\Sus^{-n}\E_n^{\vee}$, where $\Omega(\D_n)$ denotes the operadic cobar construction on $\D_n$.
\end{theorem}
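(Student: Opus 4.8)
The plan is to realise $\psi_n$ as the morphism of operads classified by an operadic twisting morphism $\kappa\colon\D_n\to\E_n$ --- recall that, by the universal property of the operadic cobar construction, a morphism of operads $\Omega(\D_n)\to\mathcal{P}$ is the same datum as a twisting morphism $\D_n\to\mathcal{P}$ --- and then to prove that $\kappa$ is a Koszul morphism. By the bar--cobar formalism of Chapter 11 of \cite{LV}, which (as discussed in Section 3, modulo careful bookkeeping of the norm map) carries over to characteristic $p$, the latter is precisely the assertion that $\psi_n$ is a quasi-isomorphism. To set up $\kappa$, note that $\E_n(r)$ is finite dimensional in each arity by Lemma \ref{lemenisbounded}, so the arity-wise linear dual $\E_n^{\vee}$ is a connected, non-negatively graded cooperad and $\D_n=\Sus^{-n}\E_n^{\vee}$ is its $n$-fold operadic desuspension; the twisting morphism $\kappa$ is built from the duality between $\E_n$ and $\E_n^{\vee}$ together with the shift $\Sus^{-n}$, which is exactly what makes $\kappa$ homogeneous of degree $-1$, and one checks directly that it satisfies the Maurer--Cartan equation. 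For $n=1$ this recovers the classical twisting morphism underlying $A_\infty$ --- since $\E_1$ is the associative operad --- and the theorem is classical in that case.

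The substance of the proof is the acyclicity of the Koszul complex $(\D_n\circ_{\kappa}\E_n,\,d_{\kappa})$, equivalently the statement that the homology of the bar construction $B_{\kappa}\E_n$ is, arity-wise, the cooperad $\D_n$ concentrated in a single weight. Over a field of characteristic $0$ this is immediate: $\E_n$ is then formal, its homology is the $n$-Gerstenhaber operad, and that operad is classically Koszul and self-dual up to an $n$-fold operadic suspension, so acyclicity of the Koszul complex of the homology plus formality gives the result. In characteristic $p$ this shortcut is unavailable, because $\E_n$ is genuinely non-formal --- the non-trivial higher operations on spheres (Theorem \ref{thrmspheresaresomethimesformal}) are one manifestation --- so one has to work at the chain level. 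I would pursue two complementary routes. The first is to write down an explicit contracting homotopy on $\D_n\circ_{\kappa}\E_n$ directly from the interval-cut/surjection combinatorics, after replacing $\E_n$ by the smaller quasi-isomorphic surjection operad $\X_n$ via the table reduction morphism $TR_n$, so that the homotopy lives on sequences of surjections. The second is an induction on $n$ with base case the Koszulness of $\E_1$: one filters $\E_n$ by a filtration adapted to $\kappa$ and relating $\E_n$ to $\E_{n-1}$, and analyses the induced spectral sequence on $B_{\kappa}\E_n$, whose first page should be controlled by the inductive hypothesis. Convergence is not an issue: $\E_n$ and $\D_n$ are non-negatively graded and arity-wise bounded (Lemma \ref{lemenisbounded}), so one is in the favourable situation of the proof of Proposition \ref{propbarconstructionpreservesqi} and may conclude via the Classical Convergence Theorem.

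The hard part is exactly this chain-level analysis of the non-formal operad $\E_n$: constructing the explicit homotopy, or --- in the inductive approach --- choosing a filtration that is simultaneously compatible with $\kappa$ and has a tractable associated graded, is the real content, and it is where the combinatorics of the surjection operad and of table reduction do the work. A pervasive secondary point is to track the norm map when dualising and desuspending; since the $\S$-modules in sight are arity-wise finite and, in the projective cases that matter, the norm map is invertible, none of the constructions above is obstructed, but this has to be verified at each step, and in particular within the induction.
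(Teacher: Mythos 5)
This statement is not proved in the paper at all: it is imported verbatim from \cite{Fres3} (Theorem B), so the only meaningful comparison is with Fresse's proof, and measured against that your proposal has a genuine gap. The reduction you set up is fine (a morphism $\Omega(\D_n)\to\E_n$ is the same as a twisting morphism $\D_n\to\E_n$, and quasi-isomorphy is equivalent to that twisting morphism being Koszul), but the two claims that would carry the proof are not established. First, you assert that $\kappa$ is simply ``built from the duality between $\E_n$ and $\E_n^{\vee}$ together with the shift'' and that Maurer--Cartan ``is checked directly''; this is not available. The tautological degree $-1$ pairing on a shifted arity-wise dual does not satisfy Maurer--Cartan for a non-quadratic, non-formal dg operad, and in fact no explicit formula for the full twisting morphism $\tau_n$ is known --- the paper says exactly this right after the Corollary in Section \ref{subsecKoszulduality1}, and only the arity-two component (Lemma \ref{lemaritytwocompoentoftau}) is written down. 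The existence of $\tau_n$ (equivalently of $\psi_n$) is itself part of what Fresse proves, by an inductive/obstruction-theoretic construction compatible with the suspension morphisms, not by a direct verification.

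Second, and more seriously, the acyclicity of the Koszul complex in characteristic $p$ --- which you correctly identify as ``the real content'' --- is exactly what is deferred: neither of your two routes is carried out, and there is no evidence sketched that either would close. The explicit contracting homotopy on $\D_n\circ_{\kappa}\E_n$ cannot even be written down without a formula for $\kappa$, and the proposed induction on $n$ never specifies a filtration of $\E_n$ relative to $\E_{n-1}$ with a tractable associated graded; the convergence remarks (via Lemma \ref{lemenisbounded} and the argument of Proposition \ref{propbarconstructionpreservesqi}) address the easy part only. Fresse's actual proof is a long argument exploiting the cellular (complete graph) structure of the Barratt--Eccles filtration and comparisons organized around the suspension morphisms; your text is a plausible programme for re-deriving his theorem, but as it stands it is a plan rather than a proof, and the characteristic $0$ shortcut you invoke (formality of $\E_n$ plus Koszulness of the Gerstenhaber operad) does not transfer, as you yourself note via Theorem \ref{thrmspheresaresomethimesformal}. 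For the purposes of this paper the correct move is the one the paper makes: cite \cite{Fres3} rather than attempt a proof.
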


An immediate corollary of this theorem is the existence of a Koszul twisting morphism from $\D_n$ to $\E_n$.

\begin{corollary}
 For each $n \geq 1$, there exists a Koszul twisting morphism $\tau_n:\D_n \rightarrow \E_n$.
\end{corollary}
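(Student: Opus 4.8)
The plan is to deduce the corollary directly from the theorem of Fresse just quoted, which asserts the existence of quasi-isomorphisms $\psi_n:\Omega(\D_n)\rightarrow \E_n$ with $\D_n=\Sus^{-n}\E_n^{\vee}$. The key point is the standard correspondence, valid over any commutative ring (and in particular over a field of characteristic $p$, see the discussion in the earlier sections on Koszul duality in characteristic $p$), between morphisms of operads out of a cobar construction and operadic twisting morphisms. Concretely, for a cooperad $\mathcal{C}$ and an operad $\mathcal{P}$ there is a natural bijection
\[
\operatorname{Hom}_{\text{operads}}(\Omega(\mathcal{C}),\mathcal{P}) \;\cong\; Tw(\mathcal{C},\mathcal{P}),
\]
where the right-hand side is the set of operadic twisting morphisms; this is the operadic analogue of the Rosetta stone statement in \cite{LV}. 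Applying this with $\mathcal{C}=\D_n$ and $\mathcal{P}=\E_n$, the morphism $\psi_n$ corresponds to a twisting morphism $\tau_n:\D_n\rightarrow\E_n$.

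First I would recall the definition of the universal twisting morphism $\iota:\D_n\rightarrow\Omega(\D_n)$ associated to the cobar construction, and then set $\tau_n:=\psi_n\circ\iota$; because $\psi_n$ is a morphism of operads and $\iota$ is a twisting morphism, the composite $\tau_n$ is again a twisting morphism by functoriality of $Tw(\mathcal{C},-)$ in the operad variable. Next I would check that $\tau_n$ is Koszul, i.e.\ that the induced map $\Omega(\D_n)\rightarrow\E_n$ (which is exactly $\psi_n$ again, by the bijection above) is a quasi-isomorphism. But this is precisely the content of Fresse's Theorem B, so there is nothing further to prove: a twisting morphism is Koszul by definition when the corresponding map from the cobar construction is a quasi-isomorphism. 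One should perhaps also note, as a sanity check, that $\D_n$ is a conilpotent (coaugmented, connected) cooperad, which is what makes the cobar construction and hence the twisting-morphism formalism applicable; this follows from $\E_n$ being connected in the sense fixed in the conventions, so that $\E_n^{\vee}$ and its operadic desuspension $\Sus^{-n}\E_n^{\vee}$ are connected cooperads.

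There is essentially no obstacle here: the corollary is a formal consequence of the theorem together with the dictionary between cobar-constructions and twisting morphisms. The only mild subtlety worth flagging is that this dictionary must be invoked in the characteristic $p$ setting rather than the characteristic $0$ setting of \cite{LV}; but, as explained in the conventions section, once one adopts the convention that cooperad structure maps land in coinvariants (via the inverse of the norm map, using that $\D_n$ is connected so the norm map is an isomorphism), the Koszul duality formalism for operads goes through verbatim, and in particular the bijection $\operatorname{Hom}_{\text{operads}}(\Omega(\mathcal{C}),\mathcal{P})\cong Tw(\mathcal{C},\mathcal{P})$ and the notion of Koszul twisting morphism are available. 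Hence the proof is a one-line deduction, and I would simply write: apply the bijection above to $\psi_n$ to obtain $\tau_n$, which is Koszul precisely because $\psi_n$ is a quasi-isomorphism.
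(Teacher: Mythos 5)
Your proposal is correct and matches the paper's reasoning: the paper treats this as an immediate consequence of Fresse's Theorem B via the standard bijection between operad morphisms $\Omega(\mathcal{C})\rightarrow\mathcal{P}$ and twisting morphisms (it cites Theorem 6.5.7 of \cite{LV} for exactly this correspondence in a later remark), with Koszulness of $\tau_n$ being by definition the quasi-isomorphism property of $\psi_n$. Your additional remarks on the characteristic $p$ setting and connectedness of $\D_n$ are consistent with the conventions the paper fixes via the norm map.
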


The problem with this twisting morphism is that there are no explicit formulas known for the full twisting morphism. Fortunately we do have formulas for the arity two component of this twisting morphism,  which for all the applications and examples in this paper this will be sufficient. Since we do not need a description of the arity two component of $\tau_n$ yet, we postpone it to Section \ref{secexamples}.


From the bound for the $\E_n$-operad from Lemma \ref{lemenisbounded}, we can easily deduce that the $\D_n$-operad is also bounded.

\begin{lemma}\label{lemDnisbounded}
The $\D_n$-cooperad is finite dimensional in each arity, more precisely the chain complex $\D_n(r)$ is concentrated in degrees greater than $\left( -\frac{nr}{2}+n\right)\left(r-1\right)$ up and till $-n(r-1)$. The arity two component $\D_n(2)_d$ is only non-zero for $1\leq d \leq n$.
\end{lemma}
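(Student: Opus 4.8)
The plan is to read off both claims purely from Lemma~\ref{lemenisbounded} together with the definition $\D_n=\Sus^{-n}\E_n^{\vee}$, tracking how arity-wise linear duality and the $n$-fold (co)operadic desuspension act on internal degrees. First, finiteness in each arity is immediate: by Lemma~\ref{lemenisbounded} each $\E_n(r)$ is a finite-dimensional chain complex, linear duality preserves finite-dimensionality, and $\Sus^{-n}$ only tensors arity-wise with the one-dimensional chain complexes $(\mathcal{S}^c)^{-n}(r)$; hence $\D_n(r)$ is again finite-dimensional, with $\dim_\K \D_n(r)_\bullet=\dim_\K \E_n(r)_\bullet$ after the degree shift.

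For the degree range I would argue as follows. By Lemma~\ref{lemenisbounded}, $\E_n(r)$ is concentrated in degrees $d$ with $0\le d\le \tfrac{nr(r-1)}{2}-1$. Since linear duality negates degrees (our grading convention), $\E_n^{\vee}(r)$ is concentrated in degrees $d$ with $-\tfrac{nr(r-1)}{2}+1\le d\le 0$. The coendomorphism cooperad $\mathcal{S}^c$ of $\K s$ has $\mathcal{S}^c(r)$ concentrated in degree $r-1$, so tensoring arity-wise with $(\mathcal{S}^c)^{-n}$ raises the internal degree in arity $r$ by $n(r-1)$. Therefore $\D_n(r)=\Sus^{-n}\E_n^{\vee}(r)$ is concentrated in degrees $d$ with
\[
-\tfrac{nr(r-1)}{2}+1+n(r-1)\ \le\ d\ \le\ n(r-1),
\]
and rewriting the lower bound as $\bigl(-\tfrac{nr}{2}+n\bigr)(r-1)+1$ gives exactly the asserted range (degrees strictly greater than $\bigl(-\tfrac{nr}{2}+n\bigr)(r-1)$).

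The arity-two statement is the special case $r=2$: Lemma~\ref{lemenisbounded} says $\E_n(2)_d\neq 0$ precisely for $0\le d\le n-1$, dualizing gives the range $-(n-1)\le d\le 0$, and raising degrees by $n(2-1)=n$ yields $1\le d\le n$. The whole argument is routine bookkeeping; the only place that requires care is pinning down the sign and magnitude of the degree shift induced by $\Sus^{-n}$ on a cooperad and correctly converting the strict inequality $d<\tfrac{nr(r-1)}{2}$ for $\E_n$ into the (in)equalities above. I do not expect a substantive obstacle beyond this.
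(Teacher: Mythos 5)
Your argument is correct and is exactly the computation the paper intends: it leaves the deduction from Lemma \ref{lemenisbounded} (finite-dimensionality and degree bounds of $\E_n(r)$, then arity-wise dualization followed by the degree shift of $+n(r-1)$ coming from $\Sus^{-n}$) to the reader. One remark: your upper bound $n(r-1)$ is the correct one, consistent with the arity-two claim $1\le d\le n$ in the same lemma and with the basis $s^{n}\cup_i^{\vee}$ of $\D_n(2)$ used later in the paper, so the bound $-n(r-1)$ printed in the lemma statement is a sign typo rather than a flaw in your reasoning.
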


The proof follows straightforwardly from the bound of the $\E_n$-operad from Lemma \ref{lemenisbounded} and is  left to the reader. Using the twisting morphism $\tau_n$ we can define the $\E_n$-bar construction.

\begin{definition}
The bar construction on an $\E_n$-algebra $A$, with respect to the twisting morphism $\tau_n:\D_n \rightarrow \E_n$ will be denoted by $B_{\E_n}A$ and is called the $\E_n$-bar construction.
\end{definition}

\part{$\E_n$-Hopf invariants}

Using the theory described in the first part we can now define a generalization of the classical Hopf invariant using $\E_n$-algebras. For simplicity we will work over a field $\K$ of characteristic $p$ in this part, but many results also work over a more general ring.



\section{The classical Hopf invariant}\label{secclassicalhopfinvarainaiianiteihskbndsbcoisdebioisgksdaaaaaaaaaaaaaaaaaarrrrrrrgggggggg}

One way of defining the classical Hopf invariant is via the associative bar construction. Before we give the definition of the $\E_n$-Hopf invariants, we  first recall  the definition of the Hopf invariants coming from the associative bar construction. This section is based on Section 1 of \cite{SW2}.

Let $f:S^m \rightarrow X$ be a map from the $m$-dimensional sphere to a simply-connected space $X$. We would like to construct invariants of the homotopy class of this map. To do this we will first take the reduced normalized cochains on the spaces $S^m$ and $X$ to get an induced map $f^*:\N(X) \rightarrow \N(S^m)$.

\begin{remark}
For simplicity we are working here with the reduced normalized (co)chains on $S^m$ and $X$, the constructions also work when we take singular cochains or when $\K=\Q$ the polynomial de Rham forms.
\end{remark} 

By Theorem \ref{thrmBergerFresse}, the cochain complexes $\N(X)$ and $\N(S^m)$ are both $\E_{\infty}$-algebras, so in particular they are associative algebras. We can now take the associative bar construction of the map $f^*$ to get a map 
\[
B_{Ass}f^*:B_{Ass}\N(X)\rightarrow B_{Ass}\N(S^m).
\]

Since the sphere $S^m$ is a simple space from a cohomological point of view, we can extend the canonical twisting morphism $\pi:B_{Ass}(\N(S^m)) \rightarrow \N(S^m)$ to a well defined map in cohomology $\pi^*:\H^*(B_{Ass}(\N(S^m)))\rightarrow \H^*(S^m)$. This was done in Section 1 of \cite{SW2}.

\begin{lemma}[\cite{SW2}, Lemma 1.2]\label{lemcohomologyofthebarconstruction}
The $-m$th cohomology group of $B_{Ass}\N(S^m)$ is one-dimensional and generated by $\H^{-m}(S^m)$.
\end{lemma}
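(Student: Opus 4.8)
The plan is to replace $\N(S^m)$ by a formal model as a differential graded associative algebra, use the invariance of the bar construction under quasi-isomorphisms to transport the problem to that model, and then compute the resulting bar complex, which carries no differential. Throughout I take $m\geq 2$, the range in which the statement is used (for $m=1$ the reduced cochains of $S^1$ are a square-zero algebra on a single generator in degree $-1$ and the conclusion fails).

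First I would establish that $\N(S^m)$ is formal as a dg associative algebra. Its cohomology $\H^*(S^m)$ is one-dimensional, concentrated in degree $-m$. Since $S^m$ has no nondegenerate simplices above dimension $m$, the complex $\N(S^m)$ is concentrated in degrees $\geq -m$; in particular the cup square of a cocycle $\omega$ representing the generator of $\H^{-m}(S^m)$, having degree $-2m<-m$, must vanish, so $\omega\cdot\omega=0$ on the nose. Hence $\K\omega\subset\N(S^m)$, with zero differential and zero product, is a sub-dg-algebra, and the inclusion $\iota\colon\K\omega\hookrightarrow\N(S^m)$ is a quasi-isomorphism of dg associative algebras. (Equivalently: over a field $\N(S^m)$ transfers to an $A_\infty$-structure on $\H^*(S^m)$, and all higher products vanish for degree reasons because the cohomology lives in a single nonzero degree.)

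Next I would feed $\iota$ into Proposition~\ref{propbarconstructionpreservesqi}, applied to the Koszul twisting morphism $\alpha\colon\C_{Ass}\to\mathcal{A}ss$ underlying $B_{Ass}$, where $\mathcal{A}ss(n)=\K[\S_n]$ is concentrated in degree $0$ and $\C_{Ass}(n)$ is a free rank-one $\S_n$-module concentrated in degree $n-1$; both are finite-dimensional, projective over $\S_n$ in each degree, and non-negatively graded, so the hypotheses hold. Thus $B_{Ass}\iota\colon B_{Ass}(\K\omega)\to B_{Ass}\N(S^m)$ is a quasi-isomorphism and it suffices to compute $\H^{-m}\big(B_{Ass}(\K\omega)\big)$. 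On $B_{Ass}(\K\omega)$ all three summands of the bar differential vanish ($d_1$ since $\K\omega$ has no differential, $d_2$ since it has no product, $d_3$ since $\C_{Ass}$ has no internal differential), so $B_{Ass}(\K\omega)$ equals its underlying weight-graded vector space $\bigoplus_{n\geq1}\C_{Ass}(n)\otimes_{\S_n}(\K\omega)^{\otimes n}$. The weight-$n$ summand sits in the single degree $(n-1)+n(-m)=n(1-m)-1$, which for $m\geq 2$ is strictly decreasing in $n$ and equals $-m$ precisely when $n=1$; so the only contribution in degree $-m$ is the weight-one summand $\C_{Ass}(1)\otimes\K\omega=\K\omega$, and $\H^{-m}\big(B_{Ass}(\K\omega)\big)$ is one-dimensional.

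Finally I would transport this back: $B_{Ass}\iota$ is a quasi-isomorphism, so $\H^{-m}(B_{Ass}\N(S^m))$ is one-dimensional, and naturality of the canonical (weight-one) twisting morphism $\pi\colon B_{Ass}\N(S^m)\to\N(S^m)$ — whose restriction to the weight-one part is $\mathrm{id}_{\N(S^m)}$ — shows that $\pi^*$ carries this group isomorphically onto $\H^{-m}(S^m)$, so the generator is $\H^{-m}(S^m)$ as claimed. The main obstacle is the formality step: it succeeds here only because $\H^*(S^m)$ occupies a single nonzero degree (killing the product, and all higher $A_\infty$-operations, for degree reasons) together with the dimension bound that puts $\N(S^m)$ in degrees $\geq -m$ and so forces $\omega^2=0$ exactly; granting that, the rest is bookkeeping controlled by the fact that $\C_{Ass}(n)$ lies in degree $n-1$, so that weight $n$ occupies degree $n(1-m)-1$ and only $n=1$ reaches $-m$.
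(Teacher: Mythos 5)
Your argument is correct (in the range $m\geq 2$, which is the only range in which the lemma is used, and you rightly flag that $m=1$ fails), but it is not the route the paper takes for this particular statement. The paper, following \cite{SW2}, proves the lemma by an explicit weight-reduction algorithm: given any degree $-m$ cocycle in $B_{Ass}\N(S^m)$, one constructs step by step a cohomologous weight-one cocycle, and the lemma is a by-product of that construction. Your proof instead goes through strict formality of $\N(S^m)$ as a dg associative algebra (the inclusion of $\K\omega$ with zero product, which exists because a triangulation of $S^m$ has no simplices above dimension $m$, so $\omega\cup\omega=0$ on the nose), then invokes Proposition \ref{propbarconstructionpreservesqi} for the Koszul twisting morphism $Ass^{\text{\textexclamdown}}\to Ass$ (whose hypotheses you correctly verify: free, hence projective, $\S_n$-modules in non-negative degrees), and finishes with the degree count $(n-1)-nm=-m\iff n=1$ on the differential-free bar construction of the trivial algebra. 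This is in fact exactly the strategy the paper uses for the $\E_n$-analogue, Lemma \ref{lemhomologyoftheenbarconstructionindegreen}, so your proof is a legitimate and cleaner way to get the dimension statement; what it does not buy is the constructive content of the SW2 approach, which the paper needs anyway: the Hopf pairing is defined by evaluating an explicit weight-one representative $\tau(B_{Ass}f^*\omega)$ on the fundamental class, and the paper itself points out (after Lemma \ref{lemhomologyoftheenbarconstructionindegreen}) that the formality-plus-invariance argument gives no such explicit representative, which is why the weight-reduction algorithm is developed separately. So: correct, genuinely different from the cited proof of this lemma, aligned with the paper's proof of its $\E_n$ generalization, and complementary to, rather than a replacement for, the algorithmic construction used to define the pairing.
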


The idea of the proof is that for every cocycle $\omega$ of degree $-m$ in weight $w$, we can always find a cocycle $\tau(\omega)$ of weight $1$, such that $\omega$ and   $\tau(\omega)$ are cohomologous in the associative bar construction. For a detailed algorithm for this construction see Section 1 of \cite{SW2}, their algorithm will also be a special case of the algorithm described in Section \ref{secEnhopfinv}.

Using this algorithm, we can construct a map $\pi^*:B_{Ass}(\N(S^m))_{-m}\rightarrow \H^{-m}(S^m)$. So whenever we have a cocycle $\omega \in B_{Ass}(\N(X))_{-m}$, we can pull this back along the map induced by $f:S^m \rightarrow X$ to get a cocycle $B_{Ass}f^*\omega$ in $B_{Ass}(\N(S^m))_{-m}$. We then find a weight one representative of $B_{Ass}f^*\omega$, which we denote by $\tau(B_{Ass}f^* \omega)$. Once we evaluate the cocycle $\tau(B_{Ass}f^*\omega)$ on the fundamental class of  $S^m$ we get a well defined number which is an invariant of the homotopy class of $f$. 

\begin{definition}
Let $X$ be a simply-connected space then we define the Hopf pairing 
\[
\left< , \right>_{\eta}:H^{-m}(B_{Ass}(X)) \otimes \pi_m(X)\rightarrow \K
\]
 as follows. Let $f:S^m \rightarrow X$ be a map and $\omega\in B_{Ass}(X)$ be a cocycle, then we define the Hopf pairing by 
\[
\left<\omega,f\right>_{\eta}=\int_{S^m} \tau(B_{Ass}f^*(\omega)),
\]
where $\tau(B_{Ass}f^*(\omega))$ is a weight one cocycle cohomologous to $B_{Ass}f^* (\omega)$ and the integral sign means the evaluation of the cocycle $\tau(B_{Ass}f^*(\omega))$ on the fundamental class of $S^m$.

\end{definition}

\begin{remark}
With the fundamental class of $S^m$ we mean a cycle representing the generator of the homology of $S^m$. If we assume that $S^m$ is triangulated, a choice for the fundamental class would be the sum of all $m$-simplices. Technically this definition depends on a  choice of orientation of the fundamental class of $S^m$, from now on we always assume that we have fixed an orientation on $S^m$.
\end{remark}

\section{$\E_n$-Hopf invariants}\label{secEnhopfinv}

In this section we give a generalization of the Hopf invariants from the previous section by replacing the associative bar construction by the $\E_n$-bar construction. More precisely, we define invariants of maps from $S^m$ to a space $X$, by defining a pairing between the homotopy groups of $X$ and the homology of the $\E_n$-bar construction on the cochains of $X$. Most of the arguments of this generalization  are similar to the arguments of \cite{SW2}, except for a few differences. The first difference is that we need some connectivity assumptions on $S^m$ for the arguments to work, in particular we need to assume that $m>n$. The second main difference is that for the weight reduction argument to work we need to introduce an additional grading on the bar complex which we will call the $\E_n$-degree. 

First we need to do a small computation on the bar construction.

\begin{lemma}\label{lemhomologyoftheenbarconstructionindegreen}
Let $S^m$ be the $m$-dimensional sphere with $m>n$, then $H^{-m}(B_{\E_n}(\N(S^m)))=\K$ and generated (as a $\K$-module) by $\H^{-m}(S^m)$.
\end{lemma}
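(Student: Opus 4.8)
The plan is to compute the cohomology of $B_{\E_n}(\N(S^m))$ in degree $-m$ by exploiting the fact that $S^m$ is formal as an $\E_n$-algebra for $n<m$ (Theorem \ref{thrmspheresaresomethimesformal}), reducing the problem to a purely algebraic computation on the bar construction of the trivial one-dimensional $\E_n$-algebra.

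First I would replace $\N(S^m)$ by its formal model. By Theorem \ref{thrmspheresaresomethimesformal}, since $n<m$, the cochain algebra $\N(S^m)$ is quasi-isomorphic as an $\E_n$-algebra to $H$, the one-dimensional chain complex concentrated in degree $-m$ equipped with the trivial $\E_n$-algebra structure (all operations of positive arity vanish for degree reasons, since $\E_n(r)$ is concentrated in degrees $0 \le d < nr(r-1)/2$ by Lemma \ref{lemenisbounded}, while a nonzero arity-$r$ operation on $H$ would have to raise degree by $(r-1)m$, which exceeds the available degrees once $m>n$). By Proposition \ref{propbarconstructionpreservesqi} — noting that $\D_n$ and $\E_n$ are finite-dimensional, non-negatively graded, and degreewise projective $\S_r$-modules over our field $\K$ — the induced map $B_{\E_n}(H) \to B_{\E_n}(\N(S^m))$ is a quasi-isomorphism, so it suffices to compute $H^{-m}(B_{\E_n}(H))$.

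Next I would analyze $B_{\E_n}(H)$ directly. As a graded vector space it is $\bigoplus_{r\ge 1} \big(\D_n(r) \otimes H^{\otimes r}\big)_{\S_r}$, and since the $\E_n$-structure on $H$ is trivial, the bar differential $d_2$ (coming from the multiplication) vanishes; only $d_1$ (from $d_A = 0$, hence also zero) and $d_3$ (from $d_{\D_n}$) survive, so the differential is just $d_{\D_n} \circ \mathrm{Id}_H$. Thus $H^*(B_{\E_n}(H)) = \bigoplus_{r\ge 1}\big(H^*(\D_n(r)) \otimes H^{\otimes r}\big)_{\S_r}$, using the Künneth theorem and that we work over a field. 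Now I track degrees: the weight-$r$ summand has its $H^{\otimes r}$-factor concentrated in degree $-rm$, so to land in total degree $-m$ we need the $\D_n(r)$-factor in degree $(r-1)m$. By Lemma \ref{lemDnisbounded} the cooperad $\D_n(r)$ is concentrated in degrees at most $-n(r-1)$, which is strictly negative for $r\ge 2$, so no contribution is possible from $r\ge 2$; only the weight-one term $\D_n(1)\otimes H = \K \otimes H$ survives, contributing exactly $\K$ in degree $-m$, generated by the class of $\H^{-m}(S^m)$ transported through the quasi-isomorphism.

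The main obstacle — or rather the point requiring the most care — is justifying that the $\E_n$-structure on the one-dimensional model $H$ is genuinely trivial and that the formality quasi-isomorphism of Theorem \ref{thrmspheresaresomethimesformal} can be fed into Proposition \ref{propbarconstructionpreservesqi}; in particular one must check the projectivity and boundedness hypotheses for both $\D_n$ and $\E_n$, which follow from Lemmas \ref{lemenisbounded} and \ref{lemDnisbounded} together with the fact that over a field every module is projective in the relevant sense (or, more precisely, that $\E_n(r)$ and $\D_n(r)$ are finite free $\S_r$-complexes). Once that bookkeeping is in place, the degree count above is essentially forced and the result drops out. I would also remark that this is the exact analogue of Lemma \ref{lemcohomologyofthebarconstruction} in the associative case, with the weight-reduction algorithm of \cite{SW2} replaced here by the cleaner observation that the higher-weight part simply has no cohomology in degree $-m$ for degree reasons.
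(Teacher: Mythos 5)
Your proposal is correct and follows essentially the same route as the paper's own proof: replace $\N(S^m)$ by the one-dimensional trivial $\E_n$-algebra via Theorem \ref{thrmspheresaresomethimesformal}, invoke Proposition \ref{propbarconstructionpreservesqi} to pass to the bar construction of that model, and finish with a degree count showing only the weight-one part survives in degree $-m$. Two small caveats: your parenthetical claim that triviality of the $\E_n$-structure on $H$ follows from degree reasons alone fails for arities $r\geq 3$ (e.g.\ $d=(r-1)m$ can be smaller than $nr(r-1)/2$ when $m=n+1$), so you really do need the cited theorem of Berger--Fresse; and the top degree of $\D_n(r)$ is $+n(r-1)$ (compare $\D_n(2)$ sitting in degrees $1,\dots,n$), not a negative number, so the exclusion of weights $r\geq 2$ in degree $-m$ comes from $(r-1)m>(r-1)n$, i.e.\ precisely from the hypothesis $m>n$, rather than from negativity of the bound.
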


\begin{proof}
To prove the lemma let $\mathfrak{S}^m$ be the simplicial model for the sphere with only one non-degenerate simplex concentrated in dimension  $m$. Observe that for every simplicial model of the sphere, there exists a zig-zag of quasi-isomorphisms from the cochains on that model to $\N(\mathfrak{S}^m)$, the cochains on the one dimensional model of $S^m$. Because of Proposition \ref{propbarconstructionpreservesqi}, the $\E_n$-bar construction preserves quasi-isomorphisms, we therefore have a zig-zag of quasi-isomorphisms between $B_{\E_n}\H^*(\N(S^m))$ and $B_{\E_n}\H^*(\mathfrak{S}^m)$. Because of Theorem \ref{thrmspheresaresomethimesformal}, the algebra $\N(\mathfrak{S}^m)$ is a one dimensio\-nal trivial $\E_n$-algebra and the $\E_n$-bar construction is therefore the cofree $\D_n$-coalgebra on one cogenerator of degree $-m$. Because the $\E_n$-structure on $\N(\mathfrak{S}^m)$ is trivial, the differential of the bar construction is just the internal differential coming from the $\D_n$-cooperad. Because of degree reasons the coalgebra $B_{\E_n}(\N(\mathfrak{S}^m))$ is one dimensional in degree $-m$ and generated by the generating cocycle of $\mathfrak{S}^m$, this is a weight $1$ cocycle. Since $\mathfrak{S}^m$ is trivial as an $\E_n$-algebra, the differential preserves weight and this cocycle is therefore also a cohomology class, which proves the lemma.
\end{proof}

It turns out that similar to Lemma \ref{lemcohomologyofthebarconstruction}, the homology of the $\E_n$-bar construction is also generated by the cohomology of $S^m$. The next step in the construction of the $\E_n$-Hopf invariants is to give an explicit construction that associates to every degree $-m$ cocycle in $B_{\E_n}\N(S^m)$ a cohomologous cocycle of weight one. The problem with the proof of Lemma \ref{lemhomologyoftheenbarconstructionindegreen} is that it does not give us an explicit method which produces a weight one cocycle from a cocycle $\omega \in B_{\E_n}(S^m)_{-m}$.  We will now give an explicit algorithm that, given a cocycle $\omega$ produces a weight one cocycle $\omega'$ cohomologous to $\omega$. By evaluating this cocycle on the fundamental class of $S^m$ we can create homotopy invariants of maps. To  define this algorithm we first need an additional grading on the $\E_n$-bar construction.

\begin{definition}
Let $A$ be an $\E_n$-algebra and $B_{\E_n}(A)$ the $\E_n$-bar construction on $A$. We define the $\E_n$-degree of a homogeneous element $\omega \in \D_n(r)_d \otimes_{\S_r}A^{\otimes r}$ as $d$.  
\end{definition}

Note that since $\D_n$ is finite dimensional in each arity (see Lemma \ref{lemDnisbounded}), there are only finitely many $\E_n$-degrees in which there are non-zero elements.  Further note that the differential of the bar construction does not increase the $\E_n$-degree.

We will now give the algorithm that reduces the weight of a cocycle to $1$. The main difference with the Sinha-Walter approach is that instead of immediately lowering the weight, we first lower the $\E_n$-degree as much as possible and then we can lower the weight.  


First, recall that the bar differential consists of three parts. The first part $d_1$, comes from the internal differential of $\N(S^m)$ and preserves the weight and $\E_n$-degree. The second part $d_2$, comes from the multiplication of $\N(S^m)$ and lowers the weight and does not increase the $\E_n$-degree. The third part $d_3$, comes from the differential of the $\D_n$-cooperad and preserves the weight, but lowers the $\E_n$-degree by $1$.

Suppose that we start with a degree $-m$ cocycle $\omega \in B_{\E_n}\N(X)$, which is given by $\sum \omega_{w,e}$ where $\omega_{w,e}$ is the component of $\omega$ which is of $\E_n$-degree $e$ and weight $w$. When we pull this back we get a cocycle $B_{\E_n}f^*\omega$ in $B_{\E_n}(\N(S^m))$. The highest weight, highest $\E_n$-degree component of $B_{\E_n}f^*\omega$ is then of the form $\sum_{i} \gamma_i \otimes \nu^i$, where $\nu^i \in \N(S^m)^{\otimes w}$ and $\gamma_i \in  \D_n(w)_e$ runs over a basis for $\D_n(w)$ as an $\S_w$-module. So in general $\nu^i$ is of the form $\nu^i= \sum_{j_1,..,j_w}  \nu^i_{j_1} \otimes ...\otimes \nu^i_{j_w}$, with  $\nu^i_{j_k} \in \N(S^m)$.  Because $d_2$ and $d_3$ lower the weight or $\E_n$-degree and $\omega$ is a cocycle, the element $\nu^i$ is a cocycle in $\N(S^m)^{\otimes w}$. Because of degree reasons it turns out that $\nu^i$ is exact in $\N(S^m)^{\otimes w}$. This can be seen as follows, first note that by the K\"unneth Theorem the homology of $\N(S)^{\otimes w}$ is concentrated in degree $-mw$. Next we need to show that the degree of $\nu^i$ is not equal to $-mw$, because that implies that it is exact.  Because $\omega$ is of degree $-m$ and $\gamma_i$ is of degree $e$, the sum of the degrees of the $\nu_j^i$ is equal to $-m-e$. Because $\D_n(w)$ is bounded, $e$ is at most $-n(w-1)$, which implies that $\vert \sum_{j_1,...,j_w} \nu_{j_1}^i \otimes... \otimes \nu_{j_w}^i \vert$ is of degree greater than $-m-n(w-1)$. Since we assumed that $m>n$, this is bigger than $-mw$, the element $\sum_{j_1,...,j_w}\nu^i_{j_1} \otimes ...\otimes \nu^i_{j_w}$ is therefore exact in $\N(S^m)^{\otimes w}$. 




Because the cocycle $\nu^i$ is exact in $\N(S^m)^{\otimes w}$, we can find an element $d_1^{-1}(\nu^i)\in \N(S^m)^{\otimes w}$. If we do this for all $i$, we construct an element $\sum_{i}\gamma_i\otimes d^{-1}_1 \nu^i$.  The element $\sum_i \gamma_i \otimes d^{-1}_1 \nu^i$ is then a boundary between $\omega$ and  $\omega-(d_1+d_2+d_3)(\sum_{i}\gamma_i\otimes d^{-1}_1 \nu^i)$, the weight $w$ part of this element has now a lower $\E_n$-degree. Because $\D_n(w)$ is bounded below we can iterate these steps until we reach the lower bound of $\D_n(w)$, at this point the differential $d_3$ will be zero and the construction described above will lower the weight. We can repeat these steps until we reach a weight one cocycle. Since $\D_n(w)$ is bounded we know that this process will eventually stop. We call the weight one cocycle we obtain this way the weight reduction of $B_{\E_n}f^*\omega$.


\begin{definition}
Let $f:S^m \rightarrow X$ be a map and $\omega\in B_{\E_n}\N(X)$ be a cocycle, we will denote by  $\tau(B_{\E_n}f^*\omega)$ a weight $1$ cocycle which is cohomologous to $B_{\E_n}f^*\omega$.
\end{definition}

\begin{proposition}\label{prophopfiswelldefined}
The cohomology class of $\tau(B_{\E_n}f^*\omega)$ is well defined.
\end{proposition}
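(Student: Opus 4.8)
The plan is to show that, whatever choices are made while running the weight‑reduction algorithm — the primitives under $d_1$ picked at each stage, and the order in which the weight/$\E_n$-degree components are cleared — the resulting weight‑one cocycle is cohomologous, inside $B_{\E_n}\N(S^m)$, to the single fixed cocycle $B_{\E_n}f^*\omega$. Since any two cocycles that are each cohomologous to a common cocycle are cohomologous to each other, this forces all possible outputs $\tau(B_{\E_n}f^*\omega)$ into one and the same cohomology class, so the class is well defined.

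First I would isolate the elementary step of the algorithm and record that it merely subtracts a bar‑coboundary: it replaces the current degree $-m$ cocycle $c$ by $c-d_B\xi$, where $d_B=d_1+d_2+d_3$ is the bar differential and $\xi=\sum_i\gamma_i\otimes d_1^{-1}\nu^i$ is built from the top‑weight, top‑$\E_n$-degree component $\sum_i\gamma_i\otimes\nu^i$ of $c$. By Lemma \ref{lemmadifferentialsanticommute} we have $d_B^2=0$, so $c-d_B\xi$ is again a cocycle, and it is manifestly cohomologous to $c$. Moreover, since $d_1$ preserves weight and $\E_n$-degree and restores exactly $\sum_i\gamma_i\otimes\nu^i$, while $d_2$ strictly lowers the weight and $d_3$ strictly lowers the $\E_n$-degree, the new cocycle $c-d_B\xi$ has a strictly smaller top invariant in the lexicographic order on $(\text{weight},\E_n\text{-degree})$, with the weight never dropping below $1$. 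By Lemma \ref{lemDnisbounded} the cooperad $\D_n$ is finite‑dimensional in each arity, so this invariant cannot descend indefinitely; hence the algorithm terminates after finitely many steps at a weight‑one cocycle, which by construction differs from $B_{\E_n}f^*\omega$ by a finite sum of $d_B$-coboundaries. Therefore $[\tau(B_{\E_n}f^*\omega)]=[B_{\E_n}f^*\omega]$ in $H^{-m}(B_{\E_n}\N(S^m))$, and this class involves none of the algorithmic choices; since $B_{\E_n}f^*$ is a chain map it is in fact $(B_{\E_n}f^*)_*[\omega]$, so it depends only on the cohomology class of $\omega$, and by Lemma \ref{lemhomologyoftheenbarconstructionindegreen} the group it lives in is one‑dimensional.

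The hard part is essentially already contained in the discussion preceding the statement, and it is the one point I would need to check carefully at every stage and not merely the first: that the element $\nu^i\in\N(S^m)^{\otimes w}$ appearing in the top component of the current cocycle is a $d_1$-cocycle — which follows from $c$ being a cocycle together with the fact that $d_2,d_3$ strictly lower the invariant, so nothing maps into the top slot — and hence, by the degree count using $m>n$ and the lower bound on the $\E_n$-degrees of $\D_n(w)$ from Lemma \ref{lemDnisbounded}, that $\nu^i$ is actually a $d_1$-coboundary, so that $d_1^{-1}\nu^i$ and therefore $\xi$ genuinely live in $B_{\E_n}\N(S^m)$ and $d_B\xi$ really removes the top component. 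Once this is granted the conclusion is formal: every run of the algorithm outputs a cocycle representing the fixed class $(B_{\E_n}f^*)_*[\omega]$, so the cohomology class of $\tau(B_{\E_n}f^*\omega)$ is well defined.
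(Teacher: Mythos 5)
Your argument is correct, and it reaches the conclusion by a genuinely different route than the paper. The paper fixes one stage of the algorithm, takes two choices $\alpha,\alpha'$ of $\sum_i\gamma_i\otimes d_1^{-1}\nu^i$, observes that the two resulting reductions differ by $(d_2+d_3)\Lambda$ with $\Lambda=\alpha-\alpha'$ a $d_1$-cocycle, then runs the $m>n$ degree argument a second time to lift $\Lambda=d_1\lambda$ and uses the anticommutation relations of Lemma \ref{lemmadifferentialsanticommute} to exhibit $(d_2+d_3)\lambda$ as an explicit element bounding the difference; comparing two complete runs then implicitly requires iterating this over all stages. You instead anchor everything to the input: since each elementary step replaces the current cocycle $c$ by $c-d_B\xi$, every possible output is cohomologous in $B_{\E_n}\N(S^m)$ to the single fixed cocycle $B_{\E_n}f^*\omega$, so all outputs lie in the one class $(B_{\E_n}f^*)_*[\omega]$. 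This needs nothing beyond $d_B^2=0$, handles the whole iterated algorithm (and different orders of clearing components) in one stroke rather than one stage at a time, and identifies the class explicitly, which also makes it transparent that the eventual pairing depends only on $[\omega]$; what the paper's version buys is an explicit bounding element relating two given runs. Your checks of the two nontrivial ingredients are the right ones: that the top-slot coefficients $\nu^i$ are $d_1$-cocycles at every stage (not just the first), and that the $m>n$ degree count forces them to be $d_1$-exact — only note that the latter uses the \emph{upper} bound $n(w-1)$ on the degrees of $\D_n(w)$ from Lemma \ref{lemDnisbounded} (so that $|\nu^i|=-m-e>-mw$), together with the termination argument from the boundedness of $\D_n$ in each arity, exactly as in the discussion preceding the statement.
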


\begin{proof}
To show that $\tau(B_{\E_n}f^*\omega)$ is a well defined cohomology class we need to show that different choices for $d_1^{-1}\nu^i$ give cohomologous cocycles.  Suppose that we have two different choices $\alpha$ and $\alpha'$ for $\sum_i \gamma_i\otimes d^{-1}_1(\nu^i)$, then it is easy to see that $\alpha$ and $\alpha'$ will differ by by a cocycle $\Lambda\in \N(S^m)^{\otimes w}$, i.e. $\alpha=\alpha'+\Lambda$. 

We want to show that the two weight reductions $\omega-(d_1+d_2+d_3)\alpha$ and $\omega-(d_1+d_2+d_3)\alpha'$ are cohomologous. Since $\alpha$ and $\alpha'$ differ by the cocycle $\Lambda$, the weight reductions $\omega-(d_1+d_2+d_3)\alpha$ and $\omega-(d_1+d_2+d_3)\alpha$ will differ by $(d_1+d_2+d_3)\Lambda$. Since $d_1 \Lambda=0$ we get that $(d_1+d_2+d_3)\Lambda$ is equal to $(d_2+d_3)\Lambda$. We therefore need to show that $(d_2+d_3)\Lambda$ is a coboundary in the $\E_n$-bar complex.

To do this first notice that because of similar arguments as before, the element $\Lambda$ is a cocycle of degree higher or equal to $-m-n(w-1)+1$ in $\N(S^m)^{\otimes w}$ which has only non-trivial homology in degree $-mw$, we can therefore find an element $\lambda\in \N(S^m)^{\otimes w}$, such that $d_1 \lambda =\Lambda$, i.e. $\Lambda$ is a boundary. The next step is to show that $\pm(d_2+d_3)\lambda$ is a boundary bounding $(d_2+d_3)\Lambda$. To do this we will compute $d_{B_{\E_n} A}(d_2+d_3)\lambda$, which is given by $(d_1+d_2+d_3)(d_2+d_3)\lambda$. Because of Lemma \ref{lemmadifferentialsanticommute} we see that this is equal to $(d_2+d_3)d_1\lambda$ and since $d_1\lambda=\Lambda$ this is equal to $(d_2+d_3)\Lambda$. The element $(d_2+d_3)\lambda$ is therefore a boundary bounding $(d_2+d_3)\Lambda$. The two different weight reductions  differ therefore by a boundary and give the same class in cohomology. The weight reduction is therefore well defined and the cohomology  class of the element $\tau(B_{\E_n}f^*\omega)$ does not depend on choices.



\end{proof}

\begin{remark}
 A way to construct an explicit representative of $d^{-1}_1(\nu^i)$ is as follows. Since all the degrees of the $\nu^i_{j_k}$ in $\sum_j \nu^i_{j_1} \otimes ... \otimes \nu^i_{j_w} $ are different from $-m$ we can can find an explicit choice of $d_A^{-1}\nu_{j_1}^i$. It turns out that $\sum_{j} d^{-1}_1\nu_{j_1}^i\otimes \nu_{j_2}^i \otimes...\otimes \nu_{j_w}^i$ is an explicit choice of $d_1^{-1}(\sum_j \nu^i_{j_1} \otimes ... \otimes \nu^i_{j_w} )$.
\end{remark}

\begin{remark}
Note that the proof of Proposition \ref{prophopfiswelldefined} and the weight reduction algorithm breaks down when we take $n\geq m$. In this case  the element $\Lambda$ could live in degree $-mw$ and could therefore represent a non-trivial homology class.
\end{remark}

\begin{definition}
Let $X$ be a space, the $\E_n$-Hopf pairing is defined as the pairing 
\[
\left<,\right>_{\E_n}:H_m(B_{\E_n}(C^*(X))\otimes \pi_m(X) \rightarrow \F
\]
 given by 
  \[
  \left<\omega,f \right>_{\E_n}:=\int_{S^m}\tau(B_{\E_n}f^*\omega),
  \]
  where $\omega\in H_*(B_{\E_n}(\N(X))$, $f:S^m \rightarrow X \in \pi_*(X)$, $\tau(B_{\E_n}f^*\omega)$ is a choice of weight one cocycle cohomologous to $\omega$ and the integral sign means the evaluation of $\tau(B_{\E_n}f^* \omega)$ on the fundamental class of $S^m$.
\end{definition}

\begin{theorem}\label{thrmEnhopfinv}
The $\E_n$-Hopf pairing is a well defined pairing and therefore an invariant of the homotopy class of a map $f:S^m \rightarrow X$.
\end{theorem}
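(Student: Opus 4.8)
The plan is to show that the formula
\[
\left<\omega,f\right>_{\E_n}=\int_{S^m}\tau(B_{\E_n}f^*\omega)
\]
depends only on the cohomology class of $\omega$ in $H_m(B_{\E_n}(\N(X)))$ and on the homotopy class of $f$ in $\pi_m(X)$. By Proposition \ref{prophopfiswelldefined} we already know that the cohomology class of $\tau(B_{\E_n}f^*\omega)$ is independent of the choices made in the weight reduction algorithm, so the number $\int_{S^m}\tau(B_{\E_n}f^*\omega)$ is well defined once $\omega$ and $f$ are fixed. It remains to check the two invariance statements.

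First I would treat independence of the representative of $\omega$. Suppose $\omega=\omega'+d_{B_{\E_n}}\eta$ for some $\eta\in B_{\E_n}\N(X)$. Since $B_{\E_n}f^*$ is a chain map, $B_{\E_n}f^*\omega$ and $B_{\E_n}f^*\omega'$ differ by the coboundary $d_{B_{\E_n}}(B_{\E_n}f^*\eta)$, hence represent the same class in $H^{-m}(B_{\E_n}\N(S^m))$; by Lemma \ref{lemhomologyoftheenbarconstructionindegreen} this group is one-dimensional and generated by $\H^{-m}(S^m)$, and the weight-one cocycle $\tau(B_{\E_n}f^*\omega)$ is, up to coboundary, the projection of $B_{\E_n}f^*\omega$ onto this line. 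Thus $\tau(B_{\E_n}f^*\omega)$ and $\tau(B_{\E_n}f^*\omega')$ are cohomologous weight-one cocycles, and since they both live in the one-dimensional space $H^{-m}(B_{\E_n}\N(S^m))$ they evaluate to the same number on the fundamental class. (One can also argue directly: $\int_{S^m}$ factors through $H^{-m}(S^m)$ via Lemma \ref{lemhomologyoftheenbarconstructionindegreen}, and the whole construction $\omega\mapsto \tau(B_{\E_n}f^*\omega)$ descends to the map $\pi^*\colon H^{-m}(B_{\E_n}\N(S^m))\to \H^{-m}(S^m)$ implicit in that lemma.)

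Next, independence of the representative of $f$. If $f\simeq g\colon S^m\to X$, choose a simplicial homotopy $F\colon S^m\times\Delta^1\to X$ (using that $S^m$ is sufficiently triangulated, as in the Conventions section). Then $f^*$ and $g^*$ are chain-homotopic maps $\N(X)\to\N(S^m)$ through an $\E_n$-algebra homotopy, or at least through maps that become equal after applying $\H^*$; applying Proposition \ref{propbarconstructionpreservesqi} (the $\D_n$-cooperad and $\E_n$-operad satisfy its hypotheses by Lemmas \ref{lemenisbounded} and \ref{lemDnisbounded}) to the relevant zig-zag, one gets that $B_{\E_n}f^*$ and $B_{\E_n}g^*$ induce the same map on $H^{-m}$. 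Composing with the isomorphism of Lemma \ref{lemhomologyoftheenbarconstructionindegreen} and with $\int_{S^m}$ then shows $\left<\omega,f\right>_{\E_n}=\left<\omega,g\right>_{\E_n}$. Bilinearity over $\K$ is immediate since every map in sight—$B_{\E_n}f^*$, $d_1^{-1}$, the bar differentials, and evaluation on the fundamental class—is $\K$-linear.

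The main obstacle I anticipate is the homotopy-invariance step: one must make precise the sense in which $f\simeq g$ yields an honest comparison at the level of the $\E_n$-bar constructions. The cleanest route is to avoid chain homotopies of $\E_n$-algebra maps (which are delicate) and instead observe that $\int_{S^m}\tau(B_{\E_n}f^*\omega)$ depends on $f$ only through the induced map $\H^*(f)\colon \H^*(X)\to\H^*(S^m)$ together with the fundamental class of $S^m$: the weight reduction output $\tau(B_{\E_n}f^*\omega)$ is determined up to coboundary by the class of $B_{\E_n}f^*\omega$ in $H^{-m}(B_{\E_n}\N(S^m))$, and by Proposition \ref{propbarconstructionpreservesqi} together with Lemma \ref{lemhomologyoftheenbarconstructionindegreen} this class is computed from $B_{\E_n}\H^*(\N(S^m))$, on which only $\H^*(f)$ acts. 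Since $\H^*(f)=\H^*(g)$ when $f\simeq g$, invariance follows. Assembling these observations gives that $\left<,\right>_{\E_n}$ is a well-defined pairing on $H_m(B_{\E_n}(\N(X)))\otimes\pi_m(X)$, which is the assertion of the theorem.
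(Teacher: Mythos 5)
Your treatment of the dependence on the representative of $\omega$ is fine, and your instinct that the real issue is homotopy invariance in $f$ is correct; but that is exactly the step where your argument has a genuine gap, and the fallback you propose is not just unproven, it is false. You claim that $\int_{S^m}\tau(B_{\E_n}f^*\omega)$ depends on $f$ only through $\H^*(f):\H^*(X)\to\H^*(S^m)$, arguing that the class of $B_{\E_n}f^*\omega$ in $H^{-m}(B_{\E_n}\N(S^m))$ is ``computed from $B_{\E_n}\H^*(\N(S^m))$, on which only $\H^*(f)$ acts.'' Proposition \ref{propbarconstructionpreservesqi} only says that the bar construction sends quasi-isomorphisms of algebras to quasi-isomorphisms; it does not say that the map induced on bar homology by an algebra map is determined by the induced map on cohomology, because $f^*$ does not factor through $\H^*(X)$ as an $\E_n$-algebra map (non-formality is precisely what the pairing measures). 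Concretely, for the Hopf fibration $\eta:S^3\to S^2$ one has $\H^*(\eta)=0$, the same as for the constant map, yet by Example \ref{exampleprevious} the pairing with $\omega\vert_0\omega$ gives $1$ for $\eta$ and $0$ for the constant map. So your ``cleanest route'' would force all $\E_n$-Hopf invariants of maps inducing zero on cohomology to vanish, contradicting the whole point of the construction.

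Your earlier sentence (``$f^*$ and $g^*$ are chain-homotopic through an $\E_n$-algebra homotopy'') points in the right direction but is not carried out: a plain chain homotopy between $f^*$ and $g^*$ does not respect the multiplication, so it does not induce a chain homotopy between $B_{\E_n}f^*$ and $B_{\E_n}g^*$, and this is where the actual work lies. The paper resolves it using the model structure on $\E_\infty$-algebras: since $f\simeq g$, the maps $f^*,g^*:\N(X)\to\N(S^m)$ are homotopic as $\E_\infty$-algebra maps via a good path object $\N(S^m)^I$; because $B_{\E_n}$ preserves quasi-isomorphisms (Proposition \ref{propbarconstructionpreservesqi}) and surjections, $B_{\E_n}(\N(S^m)^I)$ is a good path object for $B_{\E_n}\N(S^m)$ in chain complexes, so $B_{\E_n}H$ exhibits $B_{\E_n}f^*$ and $B_{\E_n}g^*$ as homotopic chain maps, hence equal on cohomology, and then Proposition \ref{prophopfiswelldefined} finishes the argument. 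Some device of this kind (a homotopy at the level of $\E_\infty$- or $\E_n$-algebras transported through the bar construction) is needed; without it your proof of Theorem \ref{thrmEnhopfinv} does not go through.
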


\begin{proof}
To show that the Hopf pairing is well defined we need to show that if $f:S^m \rightarrow X$ and $g:S^m \rightarrow X$ are homotopic maps, then $\left<f,\omega \right>_{\E_n}=\left< g, \omega \right>_{\E_n}$ for all cocycles $\omega\in B_{\E_n}X$. Because of Proposition \ref{prophopfiswelldefined} the weight reduction only depends on the cohomology class of $B_{\E_n}f^*\omega$, we therefore need to show that $B_{\E_n}f^*\omega$  and $B_{E_n}g^* \omega$ are cohomologous for all $\omega$. To do this we will show that the maps $B_{\E_n}f^*$ and $B_{\E_n}g^*$ are chain homotopic, because this implies that they map cohomologous cocycles to cohomologous cocycles. 

To show that the maps $B_{\E_n}f^*,B_{\E_n}g^*:B_{\E_n}\N(X)\rightarrow B_{\E_n}\N(S^m)$  are chain homotopic, first note that $f^*,g^*:\N(X) \rightarrow \N(S^m)$ are homotopic as maps of $\E_{\infty}$-algebras. Therefore there exists a good path object $\N(S^m)^I$ and a map $H:\N(X) \rightarrow \N(S^m)^I$ which is a homotopy between $f^*$ and $g^*$. Since the bar construction preserves quasi-isomorphisms and maps surjective maps to surjective maps, the $\D_n$-coalgebra $B_{\E_n}(\N(S^m)^I)$ is a good path object for $B_{\E_n}\N(S^m)$ in the model category of chain complexes. The map $B_{\E_n}H:B_{\E_n}\N(X) \rightarrow B_{\E_n}(\N(S^m)^I)$ is then a homotopy between $B_{\E_n}f^*$ and $B_{\E_n}g^*$ in the model category of chain complexes. The maps    $B_{\E_n}f^*$ and $B_{\E_n}g^*$ are therefore homotopic in the category of chain complexes and induce the same map in cohomology. The $\E_n$-Hopf pairing is therefore well defined.
\end{proof}

\begin{remark}
This proof could be greatly simplified if we had a model structure on the category of coalgebras with divided symmetries and could use the results from \cite{DCH1} and \cite{Val1}. In this case we could use the fact that the $\E_n$-bar construction is a left Quillen functor. Unfortunately, their are certain technical differences between the assumptions of \cite{DCH1} and this paper. It seems very likely however that all the results form \cite{DCH1} also hold in our setting, but proving them would be beyond the scope of this paper.  
\end{remark}

\part{Suspensions}


The main goal of this part of the paper is to study the suspensions of $\E_{\infty}$-algebras and simplicial sets and their relation to the $\E_n$-Hopf invariants. 

In \cite{BF1}, Berger and Fresse defined a morphism $\sigma:\E_\infty \rightarrow  \Sus^{-1} \E_\infty$, which induces a functor from $\E_\infty$-algebras to $\E_\infty$-algebras by sending an $\E_\infty$-algebra $A$ to its desuspension $s^{-1}A$, with a new $\E_\infty$-structure. We denote  this functor by $\sigma^{-1}$, and the suspension of $A$ with the new $\E_\infty$-algebra structure by $\sigma^{-1} A$. In \cite{BF1}, it is also shown that for spheres this functor has the property that it sends  the reduced normalized cochains on the one dimensional model for $S^m$ to the reduced normalized cochains of the one dimensional model for $S^{m+1}$.

The first main result of this part is that we extend the results of Berger and Fresse and show that the  $\E_\infty$-algebra suspension functor commutes with the suspension of simplicial sets, i.e. we have a natural isomorphism between $\sigma^{-1} \N(X)$ and $\N(\Sigma X)$, where $X$  is a simplicial set, $\Sigma X$ the simplicial suspension of $X$ and $\sigma^{-1} \N(X)$ the $\E_\infty$-suspension of $\N(X)$. Dually we also have an isomorphism of $\E_\infty$-coalgebras for the chains, i.e. we have a natural isomorphism  $\sigma \NN(X) \cong \NN ( \Sigma X)$ of $\E_\infty$-coalgebras.

The second result of this part is that we determine the relationship between the $\E_n$-Hopf invariants of a simplicial set $X$ and the $\E_{n+1}$-Hopf invariants of its suspension $\Sigma X$.

\begin{remark}
Note that because our grading conventions the suspension of a simplicial set will induce a desuspension on the level of reduced normalized cochains. So technically the suspension functor should be called a desuspension functor when applied to cochains. For chains it will still be a suspension functor.
\end{remark}


\section{Suspensions of $\E_{\infty}$-algebras}\label{secsuspensions}

In this section we recall the definitions of the suspension morphism for $\E_\infty$-algebras and the suspension of a simplicial set. Then we use these definitions to show that there is a natural isomorphism between $\sigma^{-1} \N(X)$ and $\N(\Sigma X)$.

\subsection{The suspension morphism}

In this subsection we recall some facts about suspensions of $\E_{\infty}$-algebras coming from \cite{Fres3} and \cite{BF1}. These facts are important in the next sections where we will use them to give a relation between the $\E_n$-Hopf invariants and the $\E_{n+1}$-Hopf invariants.


To define the suspension morphism we first need to introduce a special cochain called $sgn$, which is a function $sgn:\E_{\infty}(r)_{r-1} \rightarrow \K$ of degree $r-1$ and is defined as follows. Let $(w_0,...,w_{r-1})$ be an element of $\E_{\infty}(r)_{r-1}$, with each $w_i \in \S_r$. The cochain $sgn$ is then defined to be zero if the sequence $(w_0(1),...,w_{r-1}(1))$ does not form a permutation of the set $\{1,...,r\}$, otherwise it is defined as the sign of this permutation, i.e.
\[
sgn(w_0,...,w_{r-1})=sgn(w_0(1),...,w_{r-1}(1)).
\] 
 Using this cochain we can now define the suspension morphism for the Barratt-Eccles operad.

\begin{definition}
 The suspension morphism for the Barratt-Eccles operad $\sigma:\E_{\infty} \rightarrow \Sus^{-1} \E_\infty$ is defined as follows. Let $(w_0,...,w_d)\in \E_\infty(r)_d$ then we define the suspension morphism $\sigma$ by
 \[
 \sigma(w_0,...,w_d)=sgn(w_0,...,w_{r-1}) \cdot (w_{r-1},...,w_d).
 \]
\end{definition}

Similar to the Barratt-Eccles operad, the surjection operad $\X_{\infty}$ also has a suspension morphism which is defined as follows.

\begin{definition}
The suspension morphism $\sigma:\X_\infty \rightarrow \Sus^{-1}\X_\infty$ for the surjection operad $\X_\infty$ is defined as follows. Recall that we can represent each surjection $u:\{1,..,r+d\} \rightarrow \{1,..,r\}$ by the sequence $(u(1),...,u(r+d))$. The suspension morphism is then defined by
\[
\sigma(u)=sgn(u(1),...,u(r))(u(r),...,u(r+d)),
\]
where $sgn(u(1),...,u(r))$ is the sign of the permutation $(u(1),...,u(r))$ if all the $u(i)$ are distinct and zero otherwise.
\end{definition}

As is explained in Remark 3.2.10 of \cite{BF1}, the suspension morphisms of $\E_\infty$ and $\X_\infty$ are related as follows.

\begin{lemma}\label{lemcommutaivediagramwithsuspensionmorphisms}
The suspension morphisms fit in the following commutative diagram
\[
\xymatrix{
\E_\infty \ar[d]^{\sigma} \ar[r]^ {TR}  & \X_\infty \ar[d]^{\sigma} \\
\Sus^{-1} \E_\infty \ar[r]^{\Sus^ {-1} TR} & \Sus^ {-1} \X_\infty.
}
\]
\end{lemma}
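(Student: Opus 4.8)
The plan is to verify the identity $\sigma_{\X_\infty}\circ TR=(\Sus^{-1}TR)\circ\sigma_{\E_\infty}$ directly, by evaluating both composites on an arbitrary homogeneous basis element $(w_0,\dots,w_d)\in\E_\infty(r)_d$ and comparing the two resulting elements of $\Sus^{-1}\X_\infty(r)_d=\X_\infty(r)_{d-r+1}$ term by term. Since this target group is zero unless $d\ge r-1$, I may assume $d\ge r-1$. First I would write out everything explicitly: the table reduction morphism $TR$ from \cite{BF1}, Section 1.6, as a signed sum $TR(w_0,\dots,w_d)=\sum_u\pm u$ over the non-degenerate surjections $u\colon\{1,\dots,r+d\}\to\{1,\dots,r\}$ produced by the admissible row-by-row readings of the table; the cochain $sgn$; and the two formulas for $\sigma$. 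On the right-hand side this gives $(\Sus^{-1}TR)(\sigma_{\E_\infty}(w_0,\dots,w_d))=sgn(w_0(1),\dots,w_{r-1}(1))\cdot TR(w_{r-1},\dots,w_d)$, which is zero precisely when $\pi:=(w_0(1),\dots,w_{r-1}(1))$ is not a permutation of $\{1,\dots,r\}$.

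Everything then reduces to a combinatorial statement about table reduction. The key claims I would establish are: (1) when $\sigma_{\X_\infty}$ is applied to $TR(w_0,\dots,w_d)$ only the terms $u$ with a permutation initial segment $(u(1),\dots,u(r))$ survive, the resulting sum vanishes unless $\pi$ is a permutation, and when $\pi$ is a permutation every surviving $u$ satisfies $sgn(u(1),\dots,u(r))=sgn(\pi)$; and (2) when $\pi$ is a permutation the truncation $u\mapsto(u(r),\dots,u(r+d))$ is a sign-preserving bijection between the surviving terms of $TR(w_0,\dots,w_d)$ and the terms of $TR(w_{r-1},\dots,w_d)$, in the sense that the sign \cite{BF1} attaches to a surviving $u$ coincides with the sign it attaches to $(u(r),\dots,u(r+d))$ as a term of $TR(w_{r-1},\dots,w_d)$. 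Granting (1) and (2) one gets $\sigma_{\X_\infty}(TR(w_0,\dots,w_d))=sgn(\pi)\cdot TR(w_{r-1},\dots,w_d)=(\Sus^{-1}TR)(\sigma_{\E_\infty}(w_0,\dots,w_d))$, which is the assertion. I would prove (1) and (2) using the recursive structure of $TR$ along the rows, peeling off the columns contributed by $w_0,\dots,w_{r-2}$ one at a time, the non-degeneracy condition forcing these early columns to be read ``minimally'' once one insists that the first $r$ output values be distinct.

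The step I expect to be the real obstacle is the sign bookkeeping in (2). One has to check that deleting the first $r-1$ rows of the table shifts the position sign by a controlled amount and that the permutation sign (the Koszul sign of the block shuffle) of a reading factors as the permutation sign of $\pi$ times the block-shuffle sign of the truncated table, with no leftover cross terms; keeping track of which intervals become ``inner'' versus ``final'' after the truncation is the delicate point. A more structural alternative would be to compare the $\E_\infty$- and $\X_\infty$-algebra structures on the normalized cochains $\N(\Sigma Y)$ of a reduced suspension, using that $TR$ intertwines the two actions (Theorem \ref{thrmBergerFresse}) together with the identification $\N(\Sigma Y)\cong\Sus^{-1}\N(Y)$; but unwinding that argument ultimately reduces to the same sign computation and has to be set up so as to avoid circularity with Theorem \ref{thrmsuspensionfunctor}, so I would carry out the explicit term-by-term verification sketched above.
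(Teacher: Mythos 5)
First, a point of comparison: the paper does not actually prove this lemma --- it is quoted directly from Remark 3.2.10 of \cite{BF1} --- so your explicit term-by-term verification is necessarily a different (and more self-contained) route; its overall shape, evaluating both composites on a basis element $(w_0,\dots,w_d)$ and matching readings of the table reduction against $TR(w_{r-1},\dots,w_d)$, is indeed the right way to reconstruct the cited remark.

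However, claims (1) and (2) are not correct as stated, and the fix is a missing idea rather than bookkeeping. The terms of $TR(w_0,\dots,w_d)$ whose first $r$ values form a permutation are \emph{not} in bijection with the terms of $TR(w_{r-1},\dots,w_d)$, and they need not carry the sign $sgn(\pi)$: a reading whose composition $(a_0,\dots,a_d)$ has some $a_i\geq 2$ with $i\leq r-2$ can still have distinct first $r$ values. For instance with $r=3$, $w_0=(1,2,3)$, $w_1=(3,1,2)$, $w_2=(2,3,1)$ (so $\pi=(1,3,2)$ is a permutation of sign $-1$), the composition $(2,1,2)$ yields the non-degenerate term $(1,2,3,2,3)$, whose initial segment $(1,2,3)$ is a permutation of sign $+1$ and whose truncation $(3,2,3)$ is not matched by any term of $TR(w_2)$. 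What rescues the computation is that such terms vanish for a different reason: if $a_i\geq2$ for some $i\leq r-2$, some value is closed at a position $<r$, so (given distinct first $r$ values) its last occurrence lies before position $r$ and the truncated sequence misses it, hence is zero in $\X_\infty(r)$ (as $(3,2,3)$ misses the value $1$). The correct statement is therefore: the only contributing terms are those with $a_0=\cdots=a_{r-2}=1$; these have initial segment exactly $\pi$, and since the first $r-1$ rows close nothing, the reading of rows $r-1,\dots,d$ coincides with the reading of the table $(w_{r-1},\dots,w_d)$ for the composition $(a_{r-1},\dots,a_d)$, giving the desired identification composition by composition. You need to add this vanishing argument, and the sign verification you defer (including the signs internal to $TR$ and those of the operadic (de)suspension identifications) still has to be carried out before this is a proof; as written the proposal is an outline of the argument that \cite{BF1} encapsulates, with its central combinatorial claim in need of repair.
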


 Because of this diagram we only need to prove statements for the surjection operad and the corresponding statement for the Barratt-Eccles operad will automatically follow.


\subsection{The $\E_\infty$-suspension functor}\label{defXsuspsnesionmorphism}

The suspension morphisms induce  the following four functors 
\[
\sigma:\E_\infty\mbox{-coalgebras} \rightarrow \E_\infty\mbox{-coalgebras},
\]
\[
\sigma:\X_\infty\mbox{-coalgebras} \rightarrow \X_\infty\mbox{-algebras},
\]
\[
 \sigma^{-1}:\E_\infty\mbox{-algebras} \rightarrow \E_\infty\mbox{-algebras}
\] 
and 
\[
\sigma^{-1}:\X_\infty\mbox{-algebras} \rightarrow \X_\infty\mbox{-algebras}
\]
 which are defined as follows. First note that for an $\E_{\infty}$-algebra $A$, the desuspension (as a chain complex)  $s^{-1} A$, is an algebra over the desuspended $\E_\infty$-operad $\Sus^{-1}\E_\infty$. We can then use the suspension morphism to induce an $\E_\infty$-algebra structure on $s^{-1} A$. So we define a functor $\sigma:\E_\infty\mbox{-algebras} \rightarrow \E_\infty\mbox{-algebras}$, which sends an $\E_\infty$-algebra $A$ to the $\E_\infty$-algebra $s^{-1} A$ with the  $\E_{\infty}$-algebra structure coming from the suspension morphism. We will denote $s^{-1} A$ with this new $\E_\infty$-algebra structure by $\sigma^{-1} A$.

 
Similarly we can also define a suspension functor for $\X_\infty$-algebras, by abuse of notation we will denote both these functors by $\sigma^{-1}$, from the context it should be clear which one we mean. 
 
For $\E_\infty$-coalgebras we also get a suspension functor. This functor is defined by sending a coalgebra $C$ to $sC$ with the $\E_\infty$-structure coming from the suspension morphism. Note that because we are working with coalgebras over operads, we get a pull back instead of the usual push forward functor. Similarly we get the same functor for $\X_\infty$-coalgebras. 
 
When we explicitly evaluate the suspension functor on an  $\X_\infty$-coalgebra $C$, then we get the following formulas for the coproducts of $\sigma C$. Let $sx\in sC$ and $u \in \X_{\infty}(r)$ and denote the $\sigma(u)$-coproduct of $x$ by $\sigma(u)(x)=\sum x_1 \otimes ... \otimes x_r$. The coproduct $u(sx)$ is then given by 
\[
u(sx)=\sum \pm s x_1 \otimes ... \otimes s x_r,
\]
where the sum ranges over the $\sigma(u)$-coproduct of $x$. The sign is given by 
\[(-1)^{ \vert \sigma(u) \vert + \vert x_1 \vert (r-1) + \vert x_2 \vert (r-2)+....+ \vert x_{r-1} \vert \cdot 1}.
\]
Dually, we can do the same for an $\X_{\infty}$-algebra $A$, in this case we get the following formulas for the products of $\sigma^{-1} A$. Let $s^{-1}x_1,...,s^{-1}x_r \in s^{-1}A$ and $u \in \X_{\infty}(r)$, then we define the product $u(s^{-1}x_1,...,s^{-1}x_r)$ by the following formula

\[
u(s^{-1}x_1,...,s^{-1}x_r):=\pm s^{-1} \sigma(u)(x_1,...,x_r),
\]
the sign $\pm$ is given by $(-1)^{r \cdot \vert \sigma(u) \vert+ \vert x_1 \vert (r-1) + \vert x_2 \vert (r-1)....+ \vert x_{r-1} \vert \cdot 1 }$. Similar to the $\X_{\infty}$-case we get the same formulas for the $\E_{\infty}$-case.  
 
In the rest of this paper we will also need iterated suspension morphisms. The $n$-fold iteration $\sigma^{-1} (\sigma^{-1} (... \sigma ^{-1} A)))$ of $\sigma^{-1}$  on $A$ will be denoted by $\sigma^{-n} A$.  

\begin{proposition}\label{propsuspensionpreserversqi}
The suspension functor $\sigma$ preserves quasi-isomorphisms and fibrations of $\E_\infty$-algebras.
\end{proposition}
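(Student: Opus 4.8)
The plan is to reduce the statement to the elementary observation that, after forgetting the $\E_\infty$-algebra structure, the functor $\sigma$ is nothing but the shift functor $s^{-1}(-)$ on the underlying chain complexes, together with the facts that shifting is exact and preserves degreewise surjections.

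First I would record that if $f\colon A\to A'$ is a morphism of $\E_\infty$-algebras, then the underlying chain map of $\sigma^{-1}f\colon \sigma^{-1}A\to \sigma^{-1}A'$ is $s^{-1}f$ up to a sign. This is immediate from the explicit description of the product on $\sigma^{-1}A$ given above: the suspension morphism $\sigma\colon \E_\infty\to \Sus^{-1}\E_\infty$ is the identity in arity $1$, so it does not alter the internal differential, and functoriality of $A\mapsto \sigma^{-1}A$ is clear. Thus the forgetful functor to chain complexes intertwines $\sigma$ with $s^{-1}(-)$ (and, in the coalgebra version, with $s(-)$).

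Next I would invoke exactness of the shift: $H_k(s^{-1}V)\cong H_{k+1}(V)$ naturally in $V$, so $f$ is a quasi-isomorphism if and only if $s^{-1}f$ is. By Theorem 3.1.1 of \cite{BF1}, the weak equivalences of $\E_\infty$-algebras are precisely the quasi-isomorphisms of underlying chain complexes, so $\sigma$ preserves weak equivalences. Likewise, by the same theorem the fibrations of $\E_\infty$-algebras are the maps whose underlying chain map is degreewise surjective, and $s^{-1}f$ is degreewise surjective if and only if $f$ is, since shifting merely relabels degrees; hence $\sigma$ preserves fibrations as well.

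There is no genuine obstacle here: the only point requiring a little care is the first step, namely that $\sigma^{-1}f$ really does have $s^{-1}f$ as its underlying map and that the sign bookkeeping causes no trouble — but signs are irrelevant both for the homology computation and for testing surjectivity, so this is harmless. The identical argument applies verbatim to the $\X_\infty$-versions of the functor and, dually, to the coalgebra suspension functors.
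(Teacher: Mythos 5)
Your proof is correct and follows essentially the same route as the paper: both reduce the claim to the facts that quasi-isomorphisms and fibrations of $\E_\infty$-algebras are detected on underlying chain complexes and that the chain-level (de)suspension preserves them. Your version merely spells out the details (the underlying map being $s^{-1}f$, the appeal to Theorem 3.1.1 of \cite{BF1}) that the paper leaves implicit.
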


\begin{proof}
Since both fibrations and quasi-isomorphism are defined as properties of the underlying chain complexes and the suspension of chain complexes preserves quasi-isomorphism and fibrations of chain complexes, the suspension functor $\sigma^{-1} $ also preserves  fibrations and quasi-isomorphisms. 
\end{proof}

The following lemma can be found as the Fact in Section 7 of \cite{Fres3}.

\begin{lemma}\label{lemspheressuspensions}
 Denote by $\N(\mathfrak{S}^m)$ the reduced normalized chains on $\mathfrak{S}^m$, the simplicial model for $S^m$ with one non-degenerate simplex of dimension $m$. Then there is an isomorphism of $\E_\infty$-algebras between $\sigma^{-1} \N(\mathfrak{S}^m)$ and  $\N(\mathfrak{S}^{m+1})$.
\end{lemma}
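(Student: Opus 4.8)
The plan is to compute both sides explicitly. The simplicial model $\mathfrak{S}^m$ has a single non-degenerate simplex $\delta_m$ in dimension $m$ together with the basepoint, so $\N(\mathfrak{S}^m)$ is one-dimensional, concentrated in (cohomological) degree $-m$, generated by a cochain $e_m$ dual to $\delta_m$. Its $\E_\infty$-algebra structure is determined by the interval-cut operations of Section~\ref{seccochainEinftyalg} applied to $\delta_m$; by Theorem~\ref{thrmspheresaresomethimesformal} this structure is non-trivial in general. Likewise $\N(\mathfrak{S}^{m+1}) = \K e_{m+1}$ with $e_{m+1}$ dual to the unique non-degenerate simplex $\delta_{m+1}$. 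On the other hand $\sigma^{-1}\N(\mathfrak{S}^m)$ has underlying complex $s^{-1}\K e_m$, one-dimensional in degree $-m-1$, so as chain complexes the two sides agree; the content is that the $\E_\infty$-structures match. First I would fix the isomorphism of underlying complexes sending $e_{m+1}$ to $s^{-1}e_m$, and then check that for each surjection $u \in \X_\infty(r)_d$ the operation $u$ acts the same way on both sides. By Lemma~\ref{lemcommutaivediagramwithsuspensionmorphisms} and the compatibility of everything with table reduction, it suffices to work with the surjection operad $\X_\infty$ throughout.

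The heart of the computation is the following identity of interval-cut operations. Fix a non-degenerate surjection $u:\{1,\dots,r+d\}\to\{1,\dots,r\}$, and evaluate $u(e_{m+1},\dots,e_{m+1})$ on $\delta_{m+1}$. For the evaluation to be non-zero we need each $C_{(k)}$ to carve out an $m$-dimensional subsimplex of $\Delta^{m+1}$, i.e.\ to omit exactly one vertex; tallying the vertex-count constraint $\sum_k (\text{length of }C_{(k)}) $ against $\dim\Delta^{m+1}$ forces the interval cut to have a very rigid shape — essentially the cut points are forced, the first $r$ intervals must be the degenerate (length-zero) ones that sort the labels $u(1),\dots,u(r)$ into order, and the remaining $d$ cuts distribute the $m$ ``interior'' steps. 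This is precisely the combinatorics that the definition of $\sigma(u) = \mathrm{sgn}(u(1),\dots,u(r))\cdot(u(r),\dots,u(r+d))$ is designed to extract: the prefix $(u(1),\dots,u(r))$ being a genuine permutation is exactly the non-vanishing condition, its sign is the permutation sign of the forced shuffle of the length-zero intervals, and the suffix $(u(r),\dots,u(r+d)) \in \X_\infty(r)_{d-(r-1)}$ is the residual surjection acting on the $m$-simplex $\delta_m$. So I would show, by unwinding the position-sign and permutation-sign recipe of Section~\ref{seccochainEinftyalg} in this constrained situation, that
\[
u(e_{m+1},\dots,e_{m+1})(\delta_{m+1}) = \varepsilon(u)\cdot \big(\sigma(u)(e_m,\dots,e_m)\big)(\delta_m),
\]
with $\varepsilon(u)$ exactly the sign $(-1)^{r\cdot|\sigma(u)|+\dots}$ appearing in the explicit formula for $\sigma^{-1}$ on $\X_\infty$-algebras in Section~\ref{defXsuspsnesionmorphism} (here all the $|x_i|$ equal $-m$, so that sign is a concrete function of $r$, $m$, $d$). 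Matching this against the definition $u(s^{-1}e_m,\dots,s^{-1}e_m) = \pm s^{-1}\sigma(u)(e_m,\dots,e_m)$ gives the desired compatibility operation-by-operation, hence the isomorphism of $\E_\infty$-algebras.

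\emph{Main obstacle.} The real work — and the only place anything can go wrong — is the sign bookkeeping: one must check that the position sign plus permutation sign coming from the interval-cut description of $u$ acting on $\delta_{m+1}$, restricted to the rigid non-vanishing configuration, agrees on the nose with the composite of (a) the sign $\mathrm{sgn}(u(1),\dots,u(r))$ built into $\sigma$, (b) the internal signs in $\sigma(u)$ acting on $\delta_m$, and (c) the Koszul/suspension sign $(-1)^{r|\sigma(u)|+\sum_i |x_i|(r-i)}$ from desuspending. These are the signs that Berger--Fresse already reconcile for spheres in Section~3.2 of \cite{BF1} and that Fresse records as the ``Fact'' in Section~7 of \cite{Fres3}, so the strategy is to follow their normalization of the interval-cut signs carefully rather than to re-derive it. Once the signs check out, the degree count (which I would state cleanly: $\sigma(u)$ lowers operadic degree by exactly $r-1$, matching the desuspension of the $r$-fold tensor power) and the non-vanishing analysis above make the identification automatic, and naturality in nothing is needed since $\mathfrak{S}^m$ is a single fixed model. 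I would also remark that combining this with Proposition~\ref{propsuspensionpreserversqi} and Theorem~\ref{thrmspheresaresomethimesformal} immediately recovers the statement for an arbitrary simplicial model of the sphere, though that is not required for the lemma as stated.
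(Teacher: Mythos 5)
The paper does not actually prove this lemma---it is quoted from the ``Fact'' in Section 7 of \cite{Fres3}---so your plan to verify it by a direct interval-cut computation is a legitimate and genuinely different route; in fact it is precisely the sphere case of the argument the paper gives later for Theorem \ref{thrmsuspensionfunctor}. However, your central non-vanishing analysis is stated incorrectly, and as written the computation would collapse. You claim that a cut contributes to $u(e_{m+1},\dots,e_{m+1})(\delta_{m+1})$ only when each $C_{(k)}$ carves out an $m$-dimensional subsimplex of $\Delta^{m+1}$ (omits exactly one vertex). That cannot be right: $e_{m+1}$ is dual to the unique non-degenerate $(m+1)$-simplex, so for degree reasons it evaluates to zero on every $m$-dimensional face, and moreover in $\mathfrak{S}^{m+1}=\Delta^{m+1}/\partial\Delta^{m+1}$ every proper face of $\delta_{m+1}$ is collapsed to the basepoint and hence already vanishes in $\N(\mathfrak{S}^{m+1})$. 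The correct condition is the opposite one: each $C_{(k)}$ must be the \emph{full} vertex set $\{0,1,\dots,m+1\}$ with no repetitions. The discrepancy is not cosmetic: under your condition the vertex count gives $d=m(r-1)-1$, which does not even match the degree $d=(m+1)(r-1)$ forced by the fact that the target is concentrated in degree $-(m+1)$, so with your constraint every term would have to vanish.

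With the corrected condition the rest of your outline does go through, essentially as in the proof of Theorem \ref{thrmsuspensionfunctor}. Since each $C_{(k)}$ must contain the vertex $0$ exactly once, the pieces meeting $0$ form the initial run of the cut; this forces $(u(1),\dots,u(r))$ to be a permutation of $\{1,\dots,r\}$, forces the first $r-1$ pieces (not the first $r$, as you write---the $r$-th piece is $[0,n_r]$ with $n_r\geq 1$) to be the length-zero intervals at $0$, and leaves the remaining $d+1$ pieces which, restricted to $[1,m+1]$ and shifted down by one, constitute exactly an interval cut of $[0,m]$ for the residual surjection $(u(r),\dots,u(r+d))=\pm\,\sigma(u)$; its own non-vanishing condition on $\delta_m$ is again that every $C'_{(k)}$ be the full vertex set of $\Delta^m$. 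The sign bookkeeping you defer is then the same position-sign/permutation-sign comparison carried out in the proof of Theorem \ref{thrmsuspensionfunctor} (the $r-1$ inner intervals ending at $0$ contribute nothing to the position sign, the prefix contributes $sgn(u(1),\dots,u(r))$ to the permutation sign, and the shuffle of the length-zero intervals past the rest produces the Koszul term matching the desuspension sign of Section \ref{defXsuspsnesionmorphism}). So once the non-vanishing condition is repaired, your argument becomes a correct, self-contained proof of the sphere case, and it is the special case of the paper's general suspension theorem rather than of the citation the paper actually relies on for this lemma.
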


\subsection{The reduced suspension of a simplicial set}\label{subsecreducedsuspensions}

Before we can compare the $E_\infty$-algebra suspension functors $\sigma$ and $\sigma^{-1}$ with the reduced suspension of a simplicial set, we first recall the definition of the reduced suspension of a simplicial set. For more details see \cite{CurSS}, Definition 1.9.

\begin{definition}
Let $X$ be a based simplicial set with base point $*$. Then we define the simplicial cone $CX$ on $X$ as the following simplicial set. The set of $n$-simplices is given by $CX_n=\{(x,j) \mid  x \in X_{n-j}, 0 \leq j \leq n  \} $, with all $(*,j)$ identified to $*$. The face and degeneracy maps are given by 
\[
   d_i(x,j)= 
\begin{cases}
   (x,j-1)            & \text{for } 0\leq i < j, \\
   (d_{i-j}(x),j)    & \text{for } j \leq i \leq n,
\end{cases}
\]

\[
   s_i(x,j)= 
\begin{cases}
   (x,j+1)            & \text{for } 0\leq i < j, \\
   (s_{i-j}(x),j)    & \text{for } j \leq i \leq n.
\end{cases}
\]

The reduced suspension $\Sigma X$ is defined as the quotient of $CX$ by $X$, where $X$ is identified with the simplicial subset of $CX$ given by all simplices of the form $(x,0)$.
\end{definition}

It is straightforward to check that that the set of non-degenerate simplices of $\Sigma X$ is given by all simplices of the form $(x,1)$, such that $x$ is a non-degenerate simplex of $X$ (and the base point). Using this bijection we get the following isomorphism. 

\begin{lemma}\label{lemphiisanisomorphism}
There is a natural isomorphism of chain complexes
$$\phi:s \tilde{N}_*(X)\rightarrow \tilde{N}_*(\Sigma X),$$
from $s \tilde{N}_*(X)$, the  suspension (as chain complexes) of the reduced normalized chains on $X$, to $\tilde{N}_*(\Sigma X)$ the reduced normalized chains on the reduced suspension of $X$, given by 
\[
\phi( s x)=(x,1)
\]
Dually, there is also a natural isomorphism
\[
\varphi:s^{-1} \N ( X) \rightarrow  \N(\Sigma X),
\]
 between the cochains on the simplicial suspension of $X$ and the desuspension of the cochains of $X$. This is given by sending a cochain $s^{-1}\alpha \in s^{-1} \N(X)$, for $\alpha \in \N(X)$, to the cochain $\varphi (\alpha)$ defined by $\varphi(s^{-1} \alpha)(x,1): =\alpha (x)$, for $x$ a non-degenerate simplex in $\tilde{N}_*(X)$.
\end{lemma}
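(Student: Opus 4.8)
The statement to prove is Lemma~\ref{lemphiisanisomorphism}: the existence of natural isomorphisms $\phi\colon s\tilde N_*(X)\to\tilde N_*(\Sigma X)$ and dually $\varphi\colon s^{-1}\N(X)\to\N(\Sigma X)$.

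\textbf{Approach.} The plan is to build $\phi$ directly from the combinatorial description of $\Sigma X$ given just above the lemma, and then obtain $\varphi$ by dualizing. The key observation, already noted in the excerpt, is that the non-degenerate simplices of $\Sigma X$ in degree $n+1$ are exactly the simplices $(x,1)$ with $x$ a non-degenerate $n$-simplex of $X$, together with the basepoint in degree $0$. Since the reduced normalized chain complex $\tilde N_*$ has, in each degree, a basis given by the non-degenerate simplices (modulo the basepoint), this bijection $x\mapsto (x,1)$ raising degree by one is precisely a degree-$+1$ linear isomorphism of graded vector spaces $\tilde N_*(X)\to\tilde N_{*+1}(\Sigma X)$, i.e.\ an isomorphism $s\tilde N_*(X)\xrightarrow{\ \cong\ }\tilde N_*(\Sigma X)$ of graded vector spaces.

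\textbf{Key steps, in order.} First I would recall that $\tilde N_*(X)$ can be presented as the quotient of the chains by the degenerate simplices, so that a basis in degree $n$ is the set of non-degenerate $n$-simplices (with the basepoint killed), and that the differential is the alternating sum $\sum_i(-1)^i d_i$ of face maps, with terms hitting degenerate simplices set to zero. Second, I would verify the claim about non-degenerate simplices of $\Sigma X$: from the degeneracy formula $s_i(x,j)$, a simplex $(x,j)$ with $j\ge 2$ is degenerate (it lies in the image of some $s_i$ with $i<j$), a simplex $(x,0)$ is identified with $X\subset CX$ and hence becomes the basepoint in the quotient $CX/X=\Sigma X$, and $(x,1)$ is non-degenerate iff $x$ is non-degenerate in $X$. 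This pins down the basis of $\tilde N_*(\Sigma X)$. Third, I would check that $\phi(sx)=(x,1)$ is compatible with differentials: using the face formula, $d_i(x,1)=(x,0)$ for $i=0$ (which is the basepoint, hence $0$ in $\tilde N_*(\Sigma X)$) and $d_i(x,1)=(d_{i-1}x,1)$ for $1\le i\le n+1$; so $\partial(x,1)=\sum_{i=1}^{n+1}(-1)^i(d_{i-1}x,1)=-\sum_{k=0}^{n}(-1)^k(d_k x,1)=-\phi(s\,\partial x)$, which matches the Koszul sign convention for the differential on the suspension $s\tilde N_*(X)$ (where $\partial(sx)=-s\partial x$). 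Hence $\phi$ is a chain isomorphism. Fourth, naturality: a based simplicial map $f\colon X\to Y$ induces $Cf(x,j)=(f(x),j)$ and hence $\Sigma f(x,1)=(f(x),1)$, so the square relating $\phi_X,\phi_Y$ and $s\tilde N_*(f),\tilde N_*(\Sigma f)$ commutes on basis elements, giving naturality. Finally, for $\varphi$ I would simply apply the linear dual $(-)^\vee$; with the grading conventions of the paper this turns $s$ into $s^{-1}$, and the explicit formula $\varphi(s^{-1}\alpha)(x,1)=\alpha(x)$ is the transpose of $\phi$, so it is automatically a natural isomorphism of cochain complexes.

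\textbf{Main obstacle.} None of this is deep; the only place to be careful is the bookkeeping of signs in the third step, making sure the shift in the face indices ($d_i$ on $(x,1)$ becomes $d_{i-1}$ on $x$, with $d_0$ landing on the basepoint) produces exactly the sign $-1$ demanded by the Koszul rule for the differential on $s\tilde N_*(X)$, and likewise checking the dual sign for $\varphi$. I would also make a brief remark that everything is stated for reduced normalized chains so that the basepoint and all degeneracies are genuinely zero, which is what makes the bijection of non-degenerate simplices into a clean isomorphism of complexes rather than merely a quasi-isomorphism.
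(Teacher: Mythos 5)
Your proof is correct and follows exactly the route the paper intends: the paper states the bijection between non-degenerate simplices of $\Sigma X$ and pairs $(x,1)$ just before the lemma and then leaves the verification "straightforward and to the reader," and your write-up (basis of reduced normalized chains, face-map computation with the Koszul sign on $s\tilde{N}_*(X)$, naturality, and dualization to get $\varphi$) is precisely that verification, carried out correctly.
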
                 

The proof of the lemma is straightforward and left to the reader.             

For the next section we will also need a description of the non-degenerate $(n+1)$-simplex of $\Sigma \Delta^n$. Recall from \cite{CurSS} Example 1.4, that the standard $n$-simplex can be described as 
\[
(\Delta^n)_q=\{(v_0,...,v_q)\mid v_i \in \{0,...,n\} \mbox{ and } 0\leq v_0 \leq ... \leq v_q \leq n\}.
\]
A subsimplex $(v_0,...,v_q)$ of the standard simplex is non-degenerate if $v_0<v_1<...<v_q$. It can easily be seen that the cone $C \Delta^n$ has exactly one non-degenerate $(n+1)$-simplex, which is given by $((0,1,...,n),1)$, where $(0,1,...,n)$ is the non-degenerate $n$-simplex of $\Delta^n$. Similarly the suspension $\Sigma \Delta^n$ also has only one non-degenerate $(n+1)$-simplex.  The map $\tilde{\psi}:\Delta^{n+1} \rightarrow C\Delta^n$  from $\Delta^{n+1}$ to $C\Delta^n$ is then defined as follows. If $x\in \Delta^{n+1}$ is a simplex of the form $(v_0,...,v_n)$ such that $v_0$ till $v_{q-1}$  are equal to $0$, then we send this simplex to $(v_q-1,...,v_{n+1}-1),q)\in C\Delta^n$. It is straightforward to check that this is indeed a map of simplicial sets and we leave this to the reader. 

\begin{definition}\label{defthemappsi}
The map $\psi:\Delta^{n+1} \rightarrow \Sigma \Delta^n$ is defined as the composition of $\tilde{\psi}$ with the quotient map form $CX$ to $\Sigma X$.
\end{definition}

In the following lemma we will describe the simplices of $\Delta^ {n+1}$ which are mapped to non-degenerate simplices under the map $\psi$.


\begin{lemma}\label{lemnondegeneratesimplicesunderpsi}
Let $y \subset \Delta^{n+1}$ be a $k$-dimensional non-degenerate subsimplex of $\Delta^ {n+1}$. Then the subsimplex $\psi(y) \subset \Sigma \Delta^n$  is non-degenerate if and only if $y$ is of the form $(0,a_1,...,a_k)$ with $1 \leq a_1 < ... < a_k\leq n$. 
\end{lemma}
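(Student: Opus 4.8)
The plan is to unwind the definition of $\psi$ from Definition \ref{defthemappsi} applied to a single subsimplex and read off when the image is degenerate. First I would recall that a non-degenerate $k$-simplex $y \subset \Delta^{n+1}$ is a strictly increasing tuple $y = (v_0 < v_1 < \dots < v_k)$ with $v_i \in \{0,\dots,n+1\}$, and that $\psi$ is the composite of $\tilde\psi : \Delta^{n+1} \to C\Delta^n$ with the quotient map $C\Delta^n \to \Sigma\Delta^n$. By the formula for $\tilde\psi$, if the initial run of zeros in $y$ has length $q$ (so $v_0 = \dots = v_{q-1} = 0$ and, in the non-degenerate case, $q \in \{0,1\}$ since repetitions are not allowed — actually $q=1$ if $v_0=0$ and $q=0$ otherwise), then $\tilde\psi(y) = ((v_q - 1, \dots, v_k - 1), q) \in C\Delta^n$. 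I would then split into the two cases according to whether $v_0 = 0$ or $v_0 \geq 1$.

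In the case $v_0 \geq 1$, the initial zero-run has length $q = 0$, so $\tilde\psi(y) = ((v_0 - 1, \dots, v_k - 1), 0)$, which is a simplex of the form $(x,0)$; but the quotient map $C\Delta^n \to \Sigma\Delta^n$ collapses exactly the subsimplices of the form $(x,0)$ (this is the image of $X \subset CX$ that we quotient by), hence $\psi(y)$ lies in the collapsed copy and is degenerate (in fact it is sent into the basepoint). So no $y$ with $v_0 \geq 1$ can have non-degenerate image. In the case $v_0 = 0$, the zero-run has length $q = 1$ (strict increase forbids $v_1 = 0$), so $\tilde\psi(y) = ((v_1 - 1, \dots, v_k - 1), 1)$. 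Writing $a_i = v_i$ for $i = 1, \dots, k$, we have $1 \leq a_1 < \dots < a_k \leq n+1$, and the image simplex is $((a_1 - 1, \dots, a_k - 1), 1)$. Since $\Sigma\Delta^n$'s non-degenerate simplices are exactly those of the form $(z,1)$ with $z$ a non-degenerate simplex of $\Delta^n$, and $(a_1 - 1, \dots, a_k - 1)$ is a non-degenerate $(k-1)$-simplex of $\Delta^n$ precisely when $0 \leq a_1 - 1 < \dots < a_k - 1 \leq n$, i.e. $1 \leq a_1 < \dots < a_k \leq n+1$, one sees the image is non-degenerate in $C\Delta^n$; it remains only to check it survives the quotient, which it does since its cone-coordinate is $1 \neq 0$.

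The one subtlety — and the main point to be careful about — is the claimed upper bound $a_k \leq n$ in the statement, rather than the $a_k \leq n+1$ that the above analysis produces directly. The resolution is that when $a_k = n+1$, the simplex $(a_1-1,\dots,a_k-1)$ has last vertex $n$, so it is still a legitimate non-degenerate simplex of $\Delta^n$; however $\psi(y)$ must actually be compared with the non-degenerate $(n+1)$-simplex of $\Sigma\Delta^n$ and its faces, and one must check whether $\psi$ is surjective onto these or whether the vertex $n+1$ forces a degeneracy after the cone identification of all $(*,j)$ to the basepoint. So the hard part will be pinning down exactly this boundary behavior: tracing through whether a face of $\Delta^{n+1}$ containing the top vertex $n+1$ maps, under $\tilde\psi$ followed by the quotient and the basepoint collapse in $CX$, to a degenerate simplex or to the basepoint. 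I would do this by directly evaluating $\psi$ on the simplex $(0, a_1, \dots, a_{k-1}, n+1)$, using the explicit face/degeneracy formulas for $CX$ given in the definition, and comparing with the non-degenerate simplices of $\Sigma\Delta^n$; I expect this to show that the top vertex $n+1$ of $\Delta^{n+1}$ is identified (via the cone structure) with the cone point, forcing $a_k \leq n$ for a genuinely non-degenerate image. Once this boundary case is settled, assembling the two cases gives the "if and only if" and completes the proof.
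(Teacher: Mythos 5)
Your case analysis is correct, and it is in substance the same argument as the paper's (very brief) proof: simplices of $\Delta^{n+1}$ not containing the vertex $0$ land in the base copy $\{(x,0)\}$ that is collapsed in forming $\Sigma\Delta^n$, hence have degenerate image, while a simplex $(0,a_1,\dots,a_k)$ is sent to $((a_1-1,\dots,a_k-1),1)$, which is of the non-degenerate form $(x,1)$ with $x$ non-degenerate in $\Delta^n$. Up to that point your write-up is, if anything, more careful than the paper's.

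The problem is the "hard part" you defer at the end: the check you propose would come out the opposite way from what you expect. By Definition \ref{defthemappsi} the vertex $n+1$ of $\Delta^{n+1}$ is sent to the vertex $n$ of $\Delta^n$; only the vertex $0$ is sent to the cone point, so there is no identification of the top vertex with the cone point and no degeneracy is forced when $a_k=n+1$. Concretely, $(0,a_1,\dots,a_{k-1},n+1)$ maps to the non-degenerate simplex $((a_1-1,\dots,a_{k-1}-1,n),1)$, and the extreme case is the top simplex $(0,1,\dots,n+1)$, which must map onto the unique non-degenerate $(n+1)$-simplex $((0,1,\dots,n),1)$ of $\Sigma\Delta^n$ — this is exactly what the comparison of interval-cut coproducts in the proof of Theorem \ref{thrmsuspensionfunctor} relies on, yet it has $a_k=n+1$ and so violates the bound as printed. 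The resolution is that the bound "$a_k\leq n$" in the lemma is an off-by-one slip: the correct condition is $1\leq a_1<\dots<a_k\leq n+1$, i.e. simply that the non-degenerate simplex $y$ begins with the vertex $0$ (the upper bound being automatic for a subsimplex of $\Delta^{n+1}$), and this is the form in which the paper itself quotes the lemma inside the proof of Theorem \ref{thrmsuspensionfunctor}. So your direct computation giving $a_k\leq n+1$ was right; the remaining step of your plan, trying to argue the image degenerates when $a_k=n+1$, would fail, and you should instead conclude the corrected statement rather than the one as printed.
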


\begin{proof}
It is easy to see that all the non-degenerate simplices of $\Sigma \Delta^n$ are of the form $(x,1)$, where $x$ is a non-degenerate simplex of $\Delta^n$.  The only simplices which map to a simplex of the form $(x,1)$ are the simplices which start with exactly one $0$. The condition for $x$ to be non-degenerate is given by $1\leq  a_1 < ... < a_k\leq n$, which is exactly the condition given by the lemma.
\end{proof}
                  
\subsection{The relation between the suspension of simplicial sets and the suspension of $\mathcal{E}_{\infty}$-algebras}

In the previous section we have seen that there is an isomorphism of chain complexes between $s^{-1} \tilde{N}^*(X)$ and $\tilde{N}^*(\Sigma X)$. In this section we will show that this isomorphism is not just an isomorphism of chain complexes, but when we equip $s^{-1}\N(X)$ with the $\E_\infty$-structure coming from the suspension morphism, this is also an isomorphism of $\E_{\infty}$-algebras.
                   
\begin{theorem}\label{thrmsuspensionfunctor}
Let $X$ be a simplicial set. The map $\phi$ from Lemma \ref{lemphiisanisomorphism} is a natural isomorphism of $\X_\infty$-coalgebras 
\[
\phi:\sigma\tilde{N}_*(X) \rightarrow \tilde{N}_*(\Sigma X),
\]
where $\sigma \tilde{N}_*(X)$ is the $\X_\infty$-suspension of the reduced normalized chains on $X$ and $\tilde{N}_*(\Sigma X)$ are the reduced normalized chains on the simplicial suspension of $X$.

Dually, the map \[
\varphi:\sigma^{-1}\tilde{N}^*(X) \rightarrow \tilde{N}^*(\Sigma X)
\]
is a natural isomorphism of $\X_\infty$-algebras.
\end{theorem}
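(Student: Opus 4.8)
The plan is to verify directly that the chain-level isomorphism $\phi: s\tilde N_*(X) \to \tilde N_*(\Sigma X)$ from Lemma \ref{lemphiisanisomorphism} intertwines the $\X_\infty$-coalgebra structures, where the source carries the coalgebra structure pulled back along the suspension morphism $\sigma:\X_\infty \to \Sus^{-1}\X_\infty$. By the explicit formula for $\sigma$ on the surjection operad and the explicit description of the suspended coproduct given in Section \ref{defXsuspsnesionmorphism}, for a surjection $u\in\X_\infty(r)$ and a simplex $x\in\tilde N_*(X)$ the coproduct $u(sx)$ in $\sigma\tilde N_*(X)$ is $\sum \pm\, sx_1\otimes\cdots\otimes sx_r$, summed over the $\sigma(u)$-coproduct $\sigma(u)(x)=\sum x_1\otimes\cdots\otimes x_r$, with the sign recorded there. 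On the other side, $u$ acts on $\tilde N_*(\Sigma X)$ via the interval cut operation applied to the non-degenerate simplices $(x,1)$. So the statement reduces to a simplex-by-simplex identity: applying the interval cut operation for $u$ to $(x,1)\in\Sigma X$ yields exactly the image under $\phi^{\otimes r}$ of $\sigma(u)$ applied to $x$, signs included. Naturality in $X$ is automatic once the identity is checked on the standard simplices, since both sides are defined by taking images of cut subsimplices.

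The key reduction is to the universal case $X=\Delta^n$, using the map $\psi:\Delta^{n+1}\to\Sigma\Delta^n$ of Definition \ref{defthemappsi} and Lemma \ref{lemnondegeneratesimplicesunderpsi}, which says precisely that a subsimplex of $\Delta^{n+1}$ maps to a non-degenerate simplex of $\Sigma\Delta^n$ iff it has the form $(0,a_1,\dots,a_k)$ with $1\le a_1<\cdots<a_k\le n$. The geometric content is this: cutting the interval $[0,\dots,n+1]$ into $r+d$ pieces according to $u$ and keeping only those cuts that produce non-degenerate subsimplices of $\Sigma\Delta^n$ forces the first cut point to absorb the vertex $0$; the remaining cut points $0\le n_1\le\cdots$ then describe exactly a cut of $[0,\dots,n]$ into $r+d-1$ pieces, which is the data for the surjection $\sigma(u)=(u(r),\dots,u(r+d))$ obtained by deleting the first $r-1$ entries of $u$. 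The factor $sgn(u(1),\dots,u(r))$ appearing in $\sigma(u)$ is accounted for by the reordering of the first block of vertices (those equal to $0$ after the $\psi$-pullback) when distributing them among the $r$ tensor factors; when $u(1),\dots,u(r)$ are not distinct, no non-degenerate contribution survives, matching $\sigma(u)=0$.

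The main obstacle, and where I would spend the bulk of the argument, is the bookkeeping of signs. There are three sources to reconcile: the position sign and permutation sign attached to the interval cut operation for $u$ on $\tilde N_*(\Sigma X)$ (Section 2.2.4 of \cite{BF1}, as recalled in the subsubsection on signs); the sign in the suspended-coproduct formula from Section \ref{defXsuspsnesionmorphism}, namely $(-1)^{|\sigma(u)| + |x_1|(r-1) + \cdots + |x_{r-1}|}$; and the Koszul signs introduced by $\phi$ when passing $s$ past the tensor factors, i.e. $\phi^{\otimes r}(sx_1\otimes\cdots\otimes sx_r) = \pm\, s\,\phi(x_1)\otimes\cdots$ with the appropriate shuffle sign. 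I would set up the computation by first treating the length-zero inner intervals and the vertex-$0$ absorption carefully, since that is exactly the discrepancy between ``length $n_i-n_{i-1}$'' and ``length $n_i-n_{i-1}+1$'' in the definition of interval lengths for final versus inner intervals, and this is what converts the sign data for a cut of $[0,\dots,n+1]$ into that for a cut of $[0,\dots,n]$. Once the inner/final interval lengths are matched up, the permutation sign of the interval shuffle for $u$ factors as ($sgn$ of the first block)$\times$(permutation sign of the interval shuffle for $\sigma(u)$), and the leftover terms should collect precisely into the Koszul sign from $\phi^{\otimes r}$ together with the degree term in the Section \ref{defXsuspsnesionmorphism} formula. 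Finally, having established the $\X_\infty$-coalgebra isomorphism, the dual statement for $\varphi$ on cochains follows by dualizing, and the Barratt--Eccles version is deduced via Lemma \ref{lemcommutaivediagramwithsuspensionmorphisms} together with the table reduction morphism and Theorem \ref{thrmBergerFresse}.
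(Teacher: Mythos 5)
Your proposal follows essentially the same route as the paper's proof: reduce by naturality to $X=\Delta^n$, use $\psi:\Delta^{n+1}\to\Sigma\Delta^n$ and Lemma \ref{lemnondegeneratesimplicesunderpsi} to see that only interval cuts beginning with exactly one vertex $0$ per value survive (forcing $u(1),\dots,u(r)$ to be a permutation), identify the residual cut with the $\sigma(u)$-cut of $[0,n]$, and then match the position and permutation signs exactly as you outline (first block giving $sgn(u(1),\dots,u(r))$, the shift of the inner-interval endpoints giving the $\vert\sigma(u)\vert=d-r+1$ term, the shuffle of the two blocks giving the $\vert x_i\vert$-terms), with the dual statement for $\varphi$ and the Barratt--Eccles case obtained exactly as you say. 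One minor slip that does not affect the argument: the residual cut of $[0,n]$ has $d+1$ pieces (the number of entries of $\sigma(u)=(u(r),\dots,u(r+d))$), not $r+d-1$, and since $\phi$ has degree $0$ the map $\phi^{\otimes r}$ itself contributes no extra Koszul sign beyond the one already built into the suspended-coproduct formula.
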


\begin{proof}
Because of Lemma \ref{lemphiisanisomorphism}, we know that the maps $\phi:\Sigma \tilde{N}_*(X) \rightarrow \tilde{N}_*(\Sigma X)$ and $\varphi:\sigma^{-1}\tilde{N}^*(X) \rightarrow \tilde{N}^*(\Sigma X)$  are natural isomorphisms of chain complexes. We need to show that they are also isomorphisms of $\X_\infty$-(co)algebras. To do this we need to show that the isomorphisms $\phi$ and $\varphi$ commute with the $\X_{\infty}$-(co)algebra structure maps.

For simplicity we will only show that $\phi$ is an $\X_\infty$-coalgebra map, since the arguments for $\varphi$ being an $\X_\infty$-algebra map are dual.  Let $u \in \X_{\infty}(r)_d$ be a surjection, then we need to show that the following diagram commutes

\begin{equation}\label{diagram23}
\xymatrix{
 s \NN(X) \ar[r]^{\sigma (u)} \ar[d]^{\phi} & (s \NN(X))^{\otimes r}  \ar[d]^{\phi^{\otimes r}}\\
 \NN(\Sigma X) \ar[r]^{u} & (\NN(\Sigma X))^{\otimes r}.
}
\end{equation}

To show that the diagram commutes, we first show that it commutes when $X$ is $\Delta^n$. We can then extend the result to general simplicial sets as follows. Let $x\in X$ be a non-degenerate $n$-simplex, then we can describe $x$ as a map $x:\Delta^n \rightarrow X$. The suspension of the simplex $x$ is then an $(n+1)$-simplex $\Sigma x \in \Sigma X$, which is described by the map $\Sigma x: \Delta^{n+1} \xrightarrow{\psi} \Sigma \Delta^n \xrightarrow{\Sigma x} \Sigma X$, by abuse of notation we have denoted both the suspension of $x:\Delta^n \rightarrow X$ and the map from $\Delta^{n+1}$ to $\Sigma X$ by $\Sigma x$. So every $(n+1)$-simplex in the suspension of $X$ factors through $\Sigma \Delta^n$, the suspension of the standard $n$-simplex. We therefore need to understand  the coproduct of $\Sigma \Delta^{n}$ and compare it to the coproduct of $\Delta^{n+1}$.

According to Section \ref{seccochainEinftyalg}, the coproduct of a simplex $x$ is defined by applying the map $x:\Delta^n\rightarrow X$ to the coproduct of $\Delta^n$. So if we can show that Diagram \ref{diagram23} commutes for $\Delta^n$, it will commute for all simplicial sets $X$.




Recall from Section \ref{seccochainEinftyalg}, that the coproduct $u:\tilde{N}_{*} (X) \rightarrow \tilde{N}(X)_* ^{\otimes r}$ is given by the sum over all interval  cuts of the interval $[0,n]$ with $d+r$ pieces. More precisely it is given by
\[
u(x)=\sum \pm  \Delta (C_{(1)})\otimes ... \otimes \Delta (C_{(r)}),
\]
where the $\Delta (C_{(i)})$ are the simplicial subsets of $\Delta^ {n+1}$ defined in Section \ref{seccochainEinftyalg}.

Because the simplicial set $\Sigma \Delta^n$ is a suspension, a majority of the non-degenerate simplices of $\Delta^{n+1}$ will get mapped to a degenerate simplex under the map $\psi:\Delta^{n+1} \rightarrow \Sigma \Delta^n$. As is explained in Lemma \ref{lemnondegeneratesimplicesunderpsi}, the map $\psi$ maps every subsimplex of $\Delta^ n$ that is not of the form $(0,a_1,...a_k)$, with $1 \leq a_1 < .. < a_k\leq n+1$, to a degenerate simplex. The only terms that might give a non-zero contribution to the coproduct of $\Sigma \Delta^{n}$ are therefore only  the interval cuts of $[0,n+1]$ that start with the exactly one zero for each value of $\{1,...,r\}$. This requires the surjection $u$ to start with a permutation of the set $\{1,...,r\}$, because otherwise the corresponding simplices in the interval cuts are degenerate. The remainder of the interval cut is an interval cut with $d+1$ pieces of the interval $[1,n+1]$. The idea of the proof is that we identify the interval cut of $[1,n+1]$ with the interval cut associated to $\sigma(u)$ of the interval $[0,n]$.





More precisely, for a term $\psi(\Delta (C_{(1)}))\otimes ... \otimes \psi(\Delta (C_{(k)}))$ in the coproduct $u(\Sigma \Delta^n)$ to be non-zero we need that all the $\psi(C_{(i)})\subset \Sigma \Delta^n$ to be non-degenerate. Because of Lemma \ref{lemnondegeneratesimplicesunderpsi}, this can only happen when all the sequences $C_{(i)}$ start with exactly one $0$. The sequence corresponding to the surjection $u$ should therefore start with a permutation of $r$. If it does not start with a permutation of $r$ then for each $C_{(i)}$ to start with a $0$ would imply that there would be at least one $C_{(i)}$ with starts with several zeros. It is then straightforward to check that the part $[1,n+1]$ of the interval cut of $[0,n+1]$ is exactly the one corresponding to the interval cut of $[0,n]$ associated to the surjection $\sigma(u)$. So up to a sign we have shown that $\phi^{\otimes r}( \sigma (u)(\Delta^n))=u(\phi(\Delta^n)$.

To show that the signs match as well we will first compute the sign of the map 
\[
s\NN(X)\xrightarrow{\sigma(u)}(s \NN (X))^{\otimes r} \xrightarrow{\phi^{\otimes r}} (\NN(\Sigma X))^{\otimes r}.
\]
 Fix an interval cut of $[0,n]$ with $\vert \sigma (u)\vert+r=d+1$ pieces and denote the cut points by $0\leq n_1 \leq ... \leq n_d \leq n_{d+1}=n$. Because of Definition \ref{defXsuspsnesionmorphism} and Section \ref{seccochainEinftyalg}, the sign corresponding to this interval cut is given by  $sgn(u(1),...,u(r))\cdot(-1)^{\vert \sigma(u) \vert+(x_1)(r-1)+x_2(r-2)+...+x_{r-1}\cdot 1+Perm+Pos}$, where $x_i$ denotes the dimension of  $\Delta (C_{(i)})$, $Perm$ is the permutation sign and $Pos$ is the position sign. We need to compare this with the sign of the map $s\NN(\Delta^n)\xrightarrow{\phi} \NN (\Sigma \Delta^n) \xrightarrow{u} (\NN(\Sigma \Delta^n))^{\otimes r}$. 

To do this, note that the $[1,n+1]$ part of the interval cut of the interval $[0,n+1]$  coincides with the interval cut of $[0,n]$ corresponding to $\sigma(u)$. The position sign corresponding to $u$ is given by the sum of the end points of the inner intervals. Note that the first $r-1$ inner intervals all end at $0$ and therefore do not contribute to the position sign. The other inner intervals are all shifted by one, since there are in total $d-r+1$ remaining inner intervals, the position sign corresponding to $u$ is given by $Pos+d-r+1$, where $Pos$ is the position sign of the interval cut associated to $\sigma(u)$. 

To compute the permutation sign of the interval cut associated to $u$, we first rearrange the interval $[0,1]$ and the interval $[1,n+1]$. The sign corresponding to rearranging the interval $[0,1]$ is exactly the sign of the permutation $(u(1),...,(u(r))$, the sign corresponding to rearranging the interval $[1,n+1]$ is exactly the Permutation sign $(-1)^{Perm}$ from the interval cut corresponding to $\sigma(u)$. When we shuffle the intervals $[0,1]$ and $[1,n+1]$ we get another $(-1)^{x_1(r-1)+...x_{r-1} \cdot 1}$ sign.

When we combine all these signs and use the fact that $\vert \sigma(u) \vert =d-r+1$, we see that both these the signs match. The map $\phi$ is therefore an isomorphism of $\X_{\infty}$-coalgebras.
\end{proof}

\begin{remark}
Note that we did not use the assumption that we are working over a field to prove Theorem \ref{thrmsuspensionfunctor}. This Theorem therefore also holds over more general rings.
\end{remark}

\begin{corollary}
The map $\phi$ is an isomorphism of $\E_\infty$-coalgebras and the maps $\varphi$ is an isomorphism of $\E_{\infty}$-algebras.
\end{corollary}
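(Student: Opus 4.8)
The plan is to deduce the corollary from Theorem~\ref{thrmsuspensionfunctor} by simply restricting structure along the table reduction morphism $TR:\E_\infty \to \X_\infty$, using the compatibility of the suspension morphisms recorded in Lemma~\ref{lemcommutaivediagramwithsuspensionmorphisms}. Recall first that, by Theorem~\ref{thrmBergerFresse}, the $\E_\infty$-(co)algebra structures on $\NN(X)$, $\N(X)$ and their (de)suspensions are, by definition, the restrictions along $TR$ of the corresponding $\X_\infty$-structures, and that the $\E_\infty$-structures on $\sigma\NN(X)$ and $\sigma^{-1}\N(X)$ are obtained from those on $\NN(X)$, $\N(X)$ by (co)suspending the underlying (co)chain complex and then pulling back, resp.\ pushing forward, along $\sigma:\E_\infty \to \Sus^{-1}\E_\infty$.

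The one point that needs checking is that the $\E_\infty$-coalgebra structure on $\sigma\NN(X)$ obtained in this way agrees with the restriction along $TR$ of the $\X_\infty$-coalgebra structure on $\sigma\NN(X)$ appearing in Theorem~\ref{thrmsuspensionfunctor}. This is exactly the commutative square of Lemma~\ref{lemcommutaivediagramwithsuspensionmorphisms}: the two composites $\E_\infty \xrightarrow{TR}\X_\infty \xrightarrow{\sigma}\Sus^{-1}\X_\infty$ and $\E_\infty \xrightarrow{\sigma}\Sus^{-1}\E_\infty \xrightarrow{\Sus^{-1}TR}\Sus^{-1}\X_\infty$ coincide, so pulling back the $\X_\infty$-coalgebra $s\NN(X)$ along either composite yields one and the same $\E_\infty$-coalgebra on $s\NN(X)$; one description is the $\E_\infty$-structure of the previous paragraph, the other is the restriction along $TR$ of $\sigma\NN(X)$ as defined in Subsection~\ref{defXsuspsnesionmorphism}. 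Here one also uses the elementary fact that (co)operadic suspension is functorial, so that $\Sus^{-1}TR$ (resp.\ the cooperadic suspension of $TR$) is again an operad (resp.\ cooperad) morphism and restriction of structure commutes with (co)suspension.

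Granting this, the corollary is immediate. Theorem~\ref{thrmsuspensionfunctor} gives that $\phi:\sigma\NN(X)\to\NN(\Sigma X)$ is an isomorphism of $\X_\infty$-coalgebras; restricting along $TR$, it is an isomorphism between the two $\E_\infty$-coalgebras $\sigma\NN(X)$ and $\NN(\Sigma X)$ in the intended sense. The dual argument, applied to the statement that $\varphi:\sigma^{-1}\N(X)\to\N(\Sigma X)$ is an isomorphism of $\X_\infty$-algebras, shows $\varphi$ is an isomorphism of $\E_\infty$-algebras. The proof is essentially formal: the only genuinely substantive input, Lemma~\ref{lemcommutaivediagramwithsuspensionmorphisms}, is already in hand, and the ``hard part'' is merely the bookkeeping of keeping the operadic versus cooperadic suspensions and the contravariance of the coalgebra pullback straight.
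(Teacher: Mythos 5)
Your proposal is correct and takes essentially the same route as the paper: deduce the $\E_\infty$-statement from the $\X_\infty$-isomorphisms of Theorem~\ref{thrmsuspensionfunctor} by restricting structure along the table reduction morphism, with the commutative square of Lemma~\ref{lemcommutaivediagramwithsuspensionmorphisms} guaranteeing that the two $\E_\infty$-structures on the (de)suspension agree. You merely spell out this compatibility check in more detail than the paper does.
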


\begin{proof}
This follows from Lemma \ref{lemcommutaivediagramwithsuspensionmorphisms} and Theorem \ref{thrmsuspensionfunctor}, since we have an isomorphism of $\X_\infty$-algebras between $\N(\Sigma(X)$ and $\sigma^{-1}(\N(X))$ and the diagram from Lemma \ref{lemcommutaivediagramwithsuspensionmorphisms} commutes we also have an isomorphism after we pull things back along the table reduction morphism.
\end{proof}
                    
\section{The relation between $\E_n$- and $\E_{n+1}$-Hopf invariants}                    

In this section we will use the suspension morphism of $\E_\infty$-algebras to give a relation between the $\E_n$-Hopf invariants of a simplicial set $X$ and the $\E_{n+1}$-Hopf invariants of the suspension of $X$. To do this we will first need to recall some more results from \cite{Fres3} about the Koszul duality of $\E_n$-operads. The main result of this section is Theorem \ref{thrmenen+1formula}.
                    
\subsection{Koszul duality for $\E_n$-operads: part 2}\label{subsecKoszulduality2}

For the applications we need a more detailed recollection of the results of \cite{Fres3}. In particular, we need to know more about the role of the suspension morphism in the Koszul duality results of \cite{Fres3}.
               
According to Theorem A of \cite{Fres3}, we have the following diagram of Koszul twisting morphisms, where $\D_n$ is defined as $\mathcal{S}^{-n}\E_n^{\vee}$ (see Section \ref{subsecKoszulduality1}).

\begin{equation}\label{diagram1}
\xymatrix{
\D_1\ar[d]^{\tau_1} \ar[r]^{\mathcal{S}^{-1}\sigma^{\vee}} & \D_2 \ar[d]^{\tau_2} \ar[r]^{\mathcal{S}^{-2}\sigma^{\vee}} & \D_3 \ar[d]^{\tau_3} \ar[r]^{\mathcal{S}^{-3}\sigma^{\vee}} &... \ar[r] & \D_{\infty} \ar[d]_{\tau_{\infty}} \\
 \E_1 \ar[r]^{\iota} & \E_2 \ar[r]^{\iota}&  \E_3 \ar[r]^{\iota}&...\ar[r]& \E_{\infty} 
}                           
\end{equation}
\begin{remark}
Strictly speaking the results proven in \cite{Fres3}, are stated in terms of a sequence of quasi-isomorphisms $\psi_n:\Omega \D_n\rightarrow \E_n$. Because of the Theorem 6.5.7 in \cite{LV}, this is equivalent to a sequence of Koszul twisting morphisms $\tau_n:\D_n \rightarrow \E_n$.
\end{remark}

For the applications we have in mind, we need the dual version of Diagram \ref{diagram1}. Recall that the suspension morphism is a morphism from $\E_{\infty}$ to $\mathcal{S}^{-1}\E_{\infty}$, which when restricted to $\E_n$ defines a map from $\E_{n}$ to $\mathcal{S}^{-1}\E_{n-1}$. One of the results of \cite{Fres3}, is that Fresse interprets the linear dual of the suspension morphism as the Koszul dual of the inclusion of the $\E_n$-operad into the $\E_{n+1}$-operad. For our applications we need the Koszul dual of the suspension morphism. It turns out that this is a morphism $\Upsilon:\mathcal{S}\D_{n+1} \rightarrow \D_{n} $ which is given by the (shifted) dual of the inclusion map $\iota:\E_{n}\rightarrow \E_{n+1}$.

\begin{definition}\label{defupsilon'}
The morphism $\Upsilon:\mathcal{S}\D_{n+1} \rightarrow \D_n$ is given by the shifted dual of the inclusion. More precisely, since $\D_n=\mathcal{S}^{-n}\E_n^{\vee}$, the inclusion map $\E_n\rightarrow \E_{n+1}$ defines a morphism from $\E_{n+1} ^{\vee} \rightarrow \E_n^ {\vee}$. After taking the $n$-fold desuspension of $\iota^{\vee}$,  we get the morphism $\Upsilon:\mathcal{S}^{-n}\E_{n+1}^{\vee}=\mathcal{S}\D_{n+1}\rightarrow \mathcal{S}^{-n}\E_n^{\vee}=\D_n$.
\end{definition}

The relation between $\Upsilon$ and the suspension morphism $\sigma$ is summarized in the following proposition.


\begin{proposition}\label{propthecommutingdiagramwiththemapformerlyknownasupsilon}
The following diagram commutes.
\[
\xymatrix{
\D_1\ar[d]^{\tau_1^{\vee}}  & \mathcal{S} \D_2 \ar[l]_{\Upsilon}\ar[d]^{\tau_2^{\vee}} & \mathcal{S}^{2} \D_3 \ar[d]^{\tau_3^{\vee}} \ar[l]_{\Upsilon} & \mathcal{S}^{3} \D_4 \ar[d]^{\tau_4^{\vee}} \ar[l]_{\Upsilon} & \mathcal{S}^{4} \D_5 \ar[d]^{\tau_5^{\vee}} \ar[l]_{\Upsilon} & ... \ar[l]_{\Upsilon} \\
 \E_1  & \mathcal{S} E_2 \ar[l]_{ \sigma}& \mathcal{S}^{2} \E_3 \ar[l]_{\sigma}& \mathcal{S}^{3} \E_4 \ar[l]_{\sigma}& \mathcal{S}^{4} \E_5 \ar[l]_{\sigma}&...\ar[l]_{\sigma}
   }                           
  \]             
\end{proposition}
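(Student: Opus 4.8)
The plan is to deduce the proposition by applying linear duality to Diagram \ref{diagram1}, i.e. to Theorem~A of \cite{Fres3}. The commutativity of the $n$-th square there says that $\iota\circ\tau_n=\tau_{n+1}\circ(\mathcal{S}^{-n}\sigma^{\vee})$ as maps $\D_n\to\E_{n+1}$, where $\iota:\E_n\hookrightarrow\E_{n+1}$ is the inclusion. By Lemmas \ref{lemenisbounded} and \ref{lemDnisbounded} every $\E_n(r)$ and every $\D_n(r)$ is finite dimensional in each degree, so on these $\S$-modules the linear duality functor $(-)^{\vee}$ is exact; it interchanges operads and cooperads (reversing the structure maps); it sends an operadic twisting morphism to an operadic twisting morphism, since the Maurer--Cartan equation dualizes term by term; it takes the operadic suspension $\mathcal{S}$ to the cooperadic suspension $\mathcal{S}^{c}$ and conversely (recall $\mathcal{S}^{\vee}=\mathcal{S}^{c}$); and it satisfies the biduality isomorphism $V^{\vee\vee}\cong V$. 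Dualizing the square above therefore yields, for every $n$, the identity $\tau_n^{\vee}\circ\iota^{\vee}=(\mathcal{S}^{-n}\sigma^{\vee})^{\vee}\circ\tau_{n+1}^{\vee}$ of maps $\E_{n+1}^{\vee}\to\D_n^{\vee}$.

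Next I would rewrite this dualized square in the form displayed in the proposition. Dualizing the defining relation $\D_k=\mathcal{S}^{-k}\E_k^{\vee}$ gives identifications of $\E_k^{\vee}$ with a suspension of $\D_k$ and of $\D_k^{\vee}$ with a suspension of $\E_k$; transporting the dualized square along these identifications (modulo the overall suspension shift discussed below) turns the vertical map over index $k$ into the map written $\tau_k^{\vee}$ in the statement, turns $\iota^{\vee}$ into $\Upsilon$ — which is exactly how $\Upsilon$ is defined in Definition \ref{defupsilon'}, as a shift of $\iota^{\vee}$ — and turns $(\mathcal{S}^{-n}\sigma^{\vee})^{\vee}$ into the suspension morphism $\sigma$, using the biduality $(\sigma^{\vee})^{\vee}=\sigma$. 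Substituting these into the dualized identity gives $\tau_n^{\vee}\circ\Upsilon=\sigma\circ\tau_{n+1}^{\vee}$, which is precisely the commutativity of the $n$-th square of the diagram in the statement; running over all $n$ gives the whole diagram. In this way commutativity becomes a formal consequence of the functoriality of $(-)^{\vee}$ applied to Diagram \ref{diagram1}.

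The one point that genuinely requires care is the bookkeeping of suspensions and signs. One must keep track of the operadic versus cooperadic suspensions ($\mathcal{S}$ versus $\mathcal{S}^{c}=\mathcal{S}^{\vee}$), check that the tower of shifts $\mathcal{S}^{k-1}$ appearing in column $k$ is exactly the one produced by the dualization — there is an apparent discrepancy of one operadic suspension, which I expect to be reconciled by the degree $-1$ shift intrinsic to the notion of operadic twisting morphism together with the $\mathcal{S}/\mathcal{S}^{c}$ convention — and verify that the Koszul signs introduced by $(-)^{\vee}$, by the suspension functors and by biduality cancel on the two composites of each commutation equation. Since each such sign occurs identically on both sides, these cancellations should be automatic, so the actual task is to check the coherence of conventions; I would carry this out on the arity-two components, where by Lemmas \ref{lemenisbounded} and \ref{lemDnisbounded} all the $\S$-modules are small and where the explicit arity-two formula for $\tau_n$ (recalled in Section \ref{secexamples}) together with the explicit formulas for $\sigma$ and $\Upsilon$ from the earlier sections are available, the equality of maps in all arities being already guaranteed by functoriality. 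I do not expect any conceptual obstacle beyond this: the statement is essentially a reformulation, via Koszul duality, of the commutativity already established in Theorem~A of \cite{Fres3}.
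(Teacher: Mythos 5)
Your proposal follows essentially the same route as the paper: the paper's proof also dualizes the commuting square $\iota\circ\tau_n=\tau_{n+1}\circ\mathcal{S}^{-n}\sigma^{\vee}$ from Theorem A of \cite{Fres3} (Diagram \ref{diagram1}), then suspends appropriately, citing that the dual of a twisting morphism is again a twisting morphism. The suspension-shift bookkeeping you flag is exactly the point the paper itself handles only by remarking that the shifted versions of $\sigma$ and $\Upsilon$ are suppressed from the notation, so your argument is correct and matches the paper's.
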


\begin{remark}
Technically we are using the shifted version of the suspension morphism and of $\Upsilon$, for notational simplicity we have decided not to include this shift in the notation.
\end{remark}       

\begin{proof}
Recall from \cite{Fres3} that the suspension morphism and the inclusion $\iota:\E_n \rightarrow \E_{n+1}$ fit in the following diagram:
\[
\xymatrix{
\D_n \ar[r]^{S^{-n}\sigma^{\vee}} \ar[d]^{\tau_n} & \D_{n+1} \ar[d]^{\tau_{n+1}} \\
\E_n \ar[r]^{\iota} & \E_{n+1}.
}
\]
When we dualize this diagram and then suspend $(n+1)$-times, we get the desired diagram. Note that because of Lemma 7.4 in \cite{RNW1}, the dual of a twisting morphism is again a twisting morphism.
\end{proof}   

\begin{remark}
Since we do not have an explicit description of $\tau_n:\D_n \rightarrow \E_n$ or $\tau^{\vee}_n:\D_n \rightarrow \E_n$ we will denote them both by $\tau_n$. 
\end{remark}

\subsection{The map on the bar constructions induced by the suspension morphism}
The goal of this section is to use the map $\Upsilon$ to define a degree $1$ chain map from $B_{\E_{n+1}}(\sigma^{-1} A)$ to $B_{\E_n}A$.


\begin{definition}\label{defupsilon}
Let $A$ be an $\E_{\infty}$-algebra, then the map $\Upsilon:B_{\E_{n+1}}(\sigma^{-1} A)\rightarrow B_{\E_n}A$ is defined as follows. Recall that the underlying graded vector space of the $\E_n$-bar construction is given by $\D_n(A)=\bigoplus_r \D_n(r) \otimes_{\S_r} A^{\otimes r}$. Then we define the morphism 
\[
\Upsilon:B_{\E_{n+1}}(\sigma^{-1} A)\rightarrow B_{\E_n}A
\]
 as a morphism of graded vector spaces by sending 
\[
\Upsilon(\mu\otimes s^{-1} a_1 \otimes ... \otimes s^{-1} a_r)=\Upsilon(\mu)\otimes a_1 \otimes ... \otimes a_r,  
\]
where $\mu \in \D_n(r)$, $a_1,...,a_r \in A$ and $\Upsilon$ is the morphism from Definition \ref{defupsilon'}.
\end{definition}

\begin{proposition}\label{propupsilon}
The map $\Upsilon:B_{\E_{n+1}}(\sigma^{-1} A)\rightarrow B_{\E_n}A$ from Definition \ref{defupsilon} is a chain map, i.e. it commutes with the bar differentials.  
\end{proposition}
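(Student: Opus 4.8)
The goal is to show that $\Upsilon$ commutes with the two bar differentials, $d_{B_{\E_n}A} \circ \Upsilon = \Upsilon \circ d_{B_{\E_{n+1}}(\sigma^{-1}A)}$, up to the usual sign convention for a degree-one map. Recall that each bar differential splits as $d_B = d_1 + d_2 + d_3$, where $d_1$ is induced by the internal differential of the algebra, $d_2$ by the algebra multiplication, and $d_3$ by the cooperad differential. Since $\Upsilon$ is defined "componentwise" — it applies the operadic map $\Upsilon\colon \mathcal{S}\D_{n+1}\to\D_n$ on the cooperad factor and is essentially the identity on the $A^{\otimes r}$ factor (up to the suspension isomorphism $s^{-1}A \leftrightarrow A$) — the natural strategy is to check compatibility with each of the three pieces separately.

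First I would treat $d_1$: this is the easiest part, since $d_1$ is built from the internal differential, and the internal differential of $\sigma^{-1}A$ is (up to the sign coming from desuspension) the internal differential of $A$; compatibility here is just bookkeeping of the suspension signs. Next, $d_3$: both $d_3$'s are induced by the cooperad differentials $d_{\D_{n+1}}$ and $d_{\D_n}$, so the required identity reduces to the statement that $\Upsilon\colon \mathcal{S}\D_{n+1}\to\D_n$ is a morphism of cooperads — in particular a chain map — which is exactly what Definition \ref{defupsilon'} gives us (it is the shifted linear dual of the operad inclusion $\iota\colon\E_n\to\E_{n+1}$, hence a cooperad morphism). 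The real work is in $d_2$, the term coming from the multiplication. Here one must compare: on the $B_{\E_{n+1}}(\sigma^{-1}A)$ side, $d_2$ uses the twisting morphism $\tau_{n+1}\colon\D_{n+1}\to\E_{n+1}$ composed with the product of $\sigma^{-1}A$, which is the product of $A$ twisted through the suspension morphism $\sigma$; on the $B_{\E_n}A$ side, $d_2$ uses $\tau_n\colon\D_n\to\E_n$ composed with the product of $A$. The key identity that makes these match is precisely the commuting square of Proposition \ref{propthecommutingdiagramwiththemapformerlyknownasupsilon}: $\sigma \circ \tau_{n+1} = \tau_n \circ \Upsilon$ (with the appropriate operadic suspensions). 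Combining this with the definition of the $\E_\infty$-algebra structure on $\sigma^{-1}A$ — where the product associated to an operation $\mu$ is, up to sign, $s^{-1}$ applied to the product associated to $\sigma(\mu)$ — should show that applying $d_2$ and then $\Upsilon$ agrees with applying $\Upsilon$ and then $d_2$.

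I expect the main obstacle to be the sign verification for the $d_2$-term. There are several sources of signs interacting: the Koszul signs from the suspension/desuspension isomorphisms $sA$, $s^{-1}A$; the explicit signs in the formula for the product on $\sigma^{-1}A$ (the $(-1)^{r\cdot|\sigma(u)| + |x_1|(r-1) + \cdots}$ factor recorded in Section \ref{defXsuspsnesionmorphism}); the operadic suspension signs $\mathcal{S}$, $\mathcal{S}^c$ appearing in $\Upsilon$ and in the cooperad structure of $\D_n = \mathcal{S}^{-n}\E_n^\vee$; and the sign $(-1)$ by which a degree-one chain map is required to anticommute rather than commute with differentials. The cleanest way to organize this is probably to first establish the statement at the level of graded vector spaces (which two maps one is comparing), then invoke the structure-map identities ($\Upsilon$ a cooperad morphism, and the square of Proposition \ref{propthecommutingdiagramwiththemapformerlyknownasupsilon}), and finally carry out the sign check by tracking the total degree of each tensor factor through the suspension isomorphisms — this last step is tedious but routine, and I would present it by reducing to the arity-two / infinitesimal-composition case where all the signs can be read off explicitly, then remarking that the general case follows by the coderivation property since both $d_2\circ\Upsilon$ and $\Upsilon\circ d_2$ are determined by their corestriction to cogenerators.
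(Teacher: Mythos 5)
Your proposal follows essentially the same route as the paper's proof: split the bar differential as $d_1+d_2+d_3$, handle $d_1$ and $d_3$ directly ($\sigma^{-1}$ only shifts the complex; $\Upsilon$ is a chain map of cooperads), and reduce the $d_2$-compatibility to the commuting square $\tau_n\circ\Upsilon=\sigma\circ\tau_{n+1}$ of Proposition \ref{propthecommutingdiagramwiththemapformerlyknownasupsilon} together with the formula for the product on $\sigma^{-1}A$ and the fact that $\Upsilon$ is a morphism of cooperads. The paper's argument is the same, differing only in that it spells out the $d_2$-terms on elements rather than carrying out the detailed sign bookkeeping you outline.
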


\begin{proof}
The proof of Proposition \ref{propupsilon} is a straightforward check that $\Upsilon$ commutes with the bar differentials. Recall that the bar differential $d_{B_{\E_{n}}}=d_1+d_2+d_3$ consists of three parts, $d_1$ comes from the differential of $A$, $d_2$ comes from the multiplication of $A$ and $d_3$ comes from the differential of the $\D_n$-cooperad. We need to show that $\Upsilon$ commutes with $d_1$, $d_2$ and $d_3$. Since the functor $\sigma^{-1}$ only shifts the underlying chain complex $A$, it is clear that  $\sigma^{-1}$ commutes with $d_1$. Similarly since $\Upsilon:\mathcal{S}\D_{n+1} \rightarrow \D_n$ is a chain map it commutes with the differentials of $\D_n$ and $\Sus\D_{n+1}$. So in particular the induced map will commute with the differentials, therefore $\Upsilon $ commutes with $d_3$. The hardest part is to show that $\Upsilon$ commutes with $d_2$.

To show that $\Upsilon$ commutes with $d_2$ first recall from Section \ref{secthebarconstruction}, that $d_2:B_{\E_{n+1}}\sigma^{-1} A \rightarrow B_{\E_{n+1}}\sigma^{-1} A$ is defined as 
\[
d_2:\D_{n+1} \circ \sigma^{-1} A \xrightarrow{\Delta_{(1)}} (\D_{n+1} \circ_{(1)} \D_{n+1}) \circ \sigma^{-1} A \xrightarrow{(Id_{\D_{n+1}}\circ_{(1)}\tau_{n+1})\circ Id_{\sigma^{-1} A}} 
\]
\[
(\D_{n+1} \circ_{(1)} \E_{n+1}) \circ \sigma^{-1} A \cong \D_{n+1} \circ (\E_{n+1} \circ \sigma^{-1} A; \sigma^{-1} A) \xrightarrow{Id_{\D_{n+1}}\circ \gamma_{\sigma^{-1} A}} \D_{n+1} \circ \sigma^{-1} A,
\] 
where $\gamma_{\sigma^{-1} A}$ is the structure map of $\sigma^{-1} A$. We now need to show that $\Upsilon d_2 =d_2 \Upsilon$. This follows from the fact that $\Upsilon $ is a morphism of cooperads, $\sigma^{-1}A$ is a suspension and because the diagram from Proposition \ref{propthecommutingdiagramwiththemapformerlyknownasupsilon} commutes.

 More precisely if we start with $\delta \otimes s^{-1} a_1 \otimes ... \otimes s^{-1} a_r\in \D_{n+1}(r) \otimes_{\S_r} (\sigma^{-1} A)^{\otimes r}\subset B_{\E_{n+1}}\sigma^{-1} A$, then we see that $\Upsilon d_2 (\delta \otimes s^{-1} a_1 \otimes ... \otimes s^{-1} a_r)$ is given by 

\begin{equation}\label{eq1}
\Upsilon d_2 (\delta \otimes s^{-1} a_1 \otimes ... \otimes s^{-1} a_r)= 
\end{equation}
\[
\sum \Upsilon(\delta') \otimes (a_1 \otimes ... \otimes a_{i-1} \otimes \tau_{n+1}(\delta'')(s^{-1} a_i\otimes ... \otimes s^{-1} a_j)\otimes  a_{j+1}\otimes...\otimes a_r), 
\]
where we denote by $\Delta(\delta)=\sum \delta' \otimes \delta''\in \D_{n+1} \circ_{(1)} \D_{n+1}$, the infinitesimal coproduct of $\D_{n+1}$. The other term is given by 
\begin{equation}\label{eq2}
d_2 \Upsilon (\delta \otimes s^{-1} a_1 \otimes ... \otimes s^{-1} a_r)=\sum \Upsilon(\delta') \otimes (a_1 \otimes ... \otimes a_{i-1} \otimes \tau_{n}(\Upsilon(\delta''))( a_i\otimes ... \otimes  a_j)\otimes a_{j+1}\otimes...\otimes a_r),
\end{equation}
where $\sum \Upsilon(\delta') \otimes \Upsilon(\delta'')$ is the infinitesimal coproduct of $\Upsilon(\delta)$. We have use here the fact that $\Upsilon$ is a morphism of cooperads, i.e. $\Delta_{(1)}(\Upsilon(\delta))=\Upsilon \otimes \Upsilon(\Delta_{(1)}(\delta))=\sum \Upsilon(\delta') \otimes \Upsilon (\delta'')$.

Since $\sigma^{-1}$ is a desuspension, and because of Theorem \ref{thrmsuspensionfunctor}, we have an equality between \\ $\tau_{n+1}(\delta'')(s^{-1} a_i\otimes ... \otimes s^{-1} a_j)$ and  $\sigma(\tau_{n+1}(\delta''))( a_i\otimes ... \otimes  a_j)$. Because the diagram of Proposition \ref{propthecommutingdiagramwiththemapformerlyknownasupsilon} commutes we also have the equality $\tau_n \Upsilon=\sigma \tau_{n+1}$. We therefore see that the terms  $\tau_{n+1}(\delta'')(s^{-1} a_i\otimes ... \otimes s^{-1} a_j)$ and $\tau_{n}(\Upsilon(\delta''))( a_i\otimes ... \otimes  a_j)$ are equal. Since the sums both range over the same coproduct and all the terms are equal, the maps $\Upsilon$ and $d_2$ commute. The map $\Upsilon$ is therefore a chain map, which concludes the proof of Proposition \ref{propupsilon}.


\end{proof}

\begin{corollary}\label{corstability}
Let $X$ be an $m$-connected simplicial set and denote by $A$ an $E_{\infty}$-algebra model for $X$. Let $n <m$, then we have a sequence of degree $0$ chain maps 
\[
H_*(B_{\E_n}(A)) \leftarrow s H_*(B_{\E_{n+1}}(\sigma^{-1} A)) \leftarrow s^2 H_*(B_{\E_{n+2}}(\sigma^{-2} A)) \leftarrow s^3 H_*(B_{\E_{n+3}}(\sigma ^{-3} A))   \leftarrow ...
\]

\end{corollary}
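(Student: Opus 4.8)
The plan is to assemble this corollary directly from the results already established, with essentially no new computation. First I would observe that the three ingredients are all in place: by Proposition \ref{propupsilon}, for each $k \geq n$ the map $\Upsilon:B_{\E_{k+1}}(\sigma^{-1}B)\to B_{\E_k}B$ is a chain map (of degree $1$), applied here with $B=\sigma^{-(k-n)}A$; by Proposition \ref{propsuspensionpreserversqi} the functor $\sigma^{-1}$ preserves quasi-isomorphisms; and by Theorem \ref{thrmsuspensionfunctor} (together with its corollary) $\sigma^{-1}A$ is an $\E_\infty$-model for $\Sigma X$, so iterating, $\sigma^{-j}A$ is an $\E_\infty$-model for $\Sigma^j X$.

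Next I would fix the degree bookkeeping. The map $\Upsilon$ of Definition \ref{defupsilon} raises homological degree by $1$ because $\Upsilon:\Sus\D_{n+1}\to\D_n$ is the $n$-fold desuspension of $\iota^\vee$ and hence carries a degree shift; on the bar construction this means $\Upsilon:B_{\E_{k+1}}(\sigma^{-1}B)\to B_{\E_k}B$ has degree $+1$. To turn this into the stated sequence of \emph{degree $0$} maps we absorb the shift by prepending a formal suspension: the composite
\[
s^{j}H_*\bigl(B_{\E_{n+j}}(\sigma^{-j}A)\bigr)\xrightarrow{s^{j}\Upsilon} s^{j-1}H_*\bigl(B_{\E_{n+j-1}}(\sigma^{-(j-1)}A)\bigr)
\]
is a degree $0$ chain map, since the degree $+1$ of $\Upsilon$ is cancelled by the drop from $s^j$ to $s^{j-1}$. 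Here I use that $\Upsilon$, being a chain map, induces a map on homology, so passing to $H_*$ is legitimate; and that the suspension of chain complexes commutes with taking homology, so $s^j H_*(-) = H_*(s^j(-))$ and the maps are genuinely defined on the indicated groups.

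The only point needing a word of justification is that $\Upsilon$ is defined at each stage, i.e. that $\sigma^{-j}A$ is really an $\E_\infty$-algebra (so that its $\E_{n+j+1}$-bar construction and the map $\Upsilon$ make sense) and that the hypothesis $n<m$ guarantees we never leave the range where the $\E_n$-bar constructions and Hopf-type arguments behave well. For the first: $\sigma^{-1}$ sends $\E_\infty$-algebras to $\E_\infty$-algebras by construction (Section \ref{defXsuspsnesionmorphism}), so by induction every $\sigma^{-j}A$ is an $\E_\infty$-algebra, and $\Upsilon$ is defined for any $\E_\infty$-algebra. For the second: since $X$ is $m$-connected, $\Sigma^j X$ is $(m+j)$-connected, and the relevant constructions for $B_{\E_{n+j}}$ only require the connectivity/boundedness facts of Lemma \ref{lemenisbounded} and Lemma \ref{lemDnisbounded}, which hold unconditionally; the condition $n<m$ is recorded to match the hypothesis of Theorem \ref{thrmEnhopfinv} and the weight-reduction argument of Section \ref{secEnhopfinv}, which is where it is actually used downstream. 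I expect the main (minor) obstacle to be purely notational: keeping the tower of suspensions $s^j$ and desuspensions $\sigma^{-j}$ consistent so that each arrow is visibly degree $0$, and making sure the shift built into $\Upsilon$ is the one that the $s^j$'s are designed to absorb. Once that bookkeeping is pinned down, the corollary is immediate from Propositions \ref{propupsilon} and \ref{propsuspensionpreserversqi} and Theorem \ref{thrmsuspensionfunctor}.
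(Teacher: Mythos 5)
Your proposal is correct and follows the same route the paper intends: the corollary is stated as an immediate consequence of Proposition \ref{propupsilon}, obtained by iterating the degree-$1$ chain map $\Upsilon$ on the successive $\E_{n+j}$-bar constructions of $\sigma^{-j}A$, passing to homology, and absorbing the degree shift with the suspensions $s^j$. Your additional remarks (that $\sigma^{-j}A$ remains an $\E_\infty$-algebra so each $\Upsilon$ is defined, and that the hypotheses on $X$ and $n<m$ are only needed downstream) are accurate and consistent with the paper.
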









\subsection{The relation between $\E_n$- and $\E_{n+1}$-Hopf invariants}



In the previous sections we have shown that the suspension morphism of the $\E_{\infty}$-operad induces a functor from $\E_\infty$-algebras to $\E_\infty$-algebras, which maps a model for a simplicial set $X$ to a model for the suspension of $X$. In particular, for an $\E_\infty$-algebra $A$, this functor induces a map from $[A,S^m]$ to $[\sigma^{-1} A,S^{m+1}]$, the set of homotopy classes of maps between $A$ and a $\E_\infty$-algebra model for the sphere $S^m$ and the suspension of these $\E_\infty$-algebras. We have also seen that the Koszul dual of the suspension morphism induces a map from the set of $\E_{n+1}$-Hopf invariants to the set of $\E_n$-Hopf invariants. In this section we combine these results to obtain a formula which computes the $\E_{n+1}$-Hopf invariants of the suspension of a map $f:S^m \rightarrow X$ in terms of the maps $\Upsilon$ and $\sigma^{-1}$.

The main result is summarized in the following theorem.


\begin{theorem}\label{thrmenen+1formula}
Let $f:S^m\rightarrow X$ be a map from the $m$-dimensional sphere $S^m$ to a simply-connected simplicial set $X$. Let $n<m$ and $\omega \in B_{\E_{n+1}}\sigma^{-1} \N(X)$, then we have the following equality between the $\E_n$-Hopf invariants of $f$ and the $\E_{n+1}$-Hopf invariants of $\Sigma f$,
\[
\left< f, \Upsilon \omega \right>_{\E_n}=\left< \Sigma f, \omega \right>_{\E_{n+1}},
\]
where $\Upsilon$ is the map from Definition \ref{defupsilon} and $\Sigma f$ is the simplicial suspension of the map $f$.
\end{theorem}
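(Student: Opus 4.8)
The plan is to reduce the identity, via two naturality squares, to a computation on the spheres and then to the arity-one component of $\Upsilon$. Throughout, I identify $\sigma^{-1}\N(X)\cong\N(\Sigma X)$ and $\sigma^{-1}\N(S^m)\cong\N(\Sigma S^m)$ via the isomorphism $\varphi$ of Lemma \ref{lemphiisanisomorphism}, where $\Sigma S^m$ denotes the simplicial suspension of the chosen model of $S^m$, a simplicial model for $S^{m+1}$. The naturality of $\varphi$ asserted in Theorem \ref{thrmsuspensionfunctor}, applied to $f:S^m\to X$, says that under these identifications the map $(\Sigma f)^{*}:\N(\Sigma X)\to\N(\Sigma S^m)$ equals $\sigma^{-1}(f^{*})$, so $B_{\E_{n+1}}(\Sigma f)^{*}=B_{\E_{n+1}}\sigma^{-1}f^{*}$. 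Since $\Upsilon$ of Definition \ref{defupsilon} acts arity-wise in the algebra variable it is natural, so $\Upsilon\circ B_{\E_{n+1}}\sigma^{-1}f^{*}=B_{\E_n}f^{*}\circ\Upsilon$. Combining, $\Upsilon\bigl(B_{\E_{n+1}}(\Sigma f)^{*}\omega\bigr)=B_{\E_n}f^{*}(\Upsilon\omega)$ in $B_{\E_n}\N(S^m)$. Writing $\eta:=B_{\E_{n+1}}(\Sigma f)^{*}\omega$, which is a degree $-(m+1)$ cocycle, and unwinding the two Hopf pairings, the theorem reduces to the equality $\int_{\Sigma S^m}\tau(\eta)=\int_{S^m}\tau(\Upsilon\eta)$.

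\textbf{Step 2.} Because $\Upsilon$ is a chain map (Proposition \ref{propupsilon}) that preserves the weight grading (it touches only the cooperad arity), $\Upsilon\tau(\eta)$ is again a weight-one cocycle and is cohomologous to $\Upsilon\eta$; so is $\tau(\Upsilon\eta)$ (whose class is moreover well defined by Proposition \ref{prophopfiswelldefined}). Hence $\Upsilon\tau(\eta)$ and $\tau(\Upsilon\eta)$ are cohomologous weight-one cocycles of degree $-m$ in $B_{\E_n}\N(S^m)$. Restricted to weight-one cocycles of degree $-m$, the functional $\int_{S^m}$ factors through $H^{-m}(B_{\E_n}\N(S^m))$: on the weight-one summand the bar differential reduces to the internal differential of $\N(S^m)$ (the component $d_2$ lands in weight zero, which is $0$, and $d_3$ vanishes on $\D_n(1)=\K$), so a weight-one cocycle is an internal cocycle; by Lemma \ref{lemhomologyoftheenbarconstructionindegreen} the weight-one inclusion induces an isomorphism $H^{-m}(\N(S^m))\xrightarrow{\ \sim\ }H^{-m}(B_{\E_n}\N(S^m))$; and pairing with the (cyclic) fundamental class of $S^m$ only sees the internal cohomology class. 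Therefore $\int_{S^m}\tau(\Upsilon\eta)=\int_{S^m}\Upsilon\tau(\eta)$, and it remains to show $\int_{\Sigma S^m}\tau(\eta)=\int_{S^m}\Upsilon\tau(\eta)$.

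\textbf{Step 3.} The weight-one summand of $B_{\E_{n+1}}\sigma^{-1}\N(S^m)$ is $\D_{n+1}(1)\otimes\sigma^{-1}\N(S^m)=\sigma^{-1}\N(S^m)$, which via $\varphi$ is $\N(\Sigma S^m)$, while the weight-one summand of $B_{\E_n}\N(S^m)$ is $\D_n(1)\otimes\N(S^m)=\N(S^m)$. The arity-one component of $\Upsilon:\mathcal{S}\D_{n+1}\to\D_n$ is the identity of $\K$ (the shifted dual of the identity $\E_n(1)\to\E_{n+1}(1)$), so on weight one $\Upsilon$ is the canonical desuspension $s^{-1}\N(S^m)\to\N(S^m)$, $s^{-1}\beta\mapsto\beta$. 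Writing $\tau(\eta)=s^{-1}\beta$ with $\beta\in\N(S^m)_{-m}$, we have $\Upsilon\tau(\eta)=\beta$; by Lemma \ref{lemphiisanisomorphism}, $\varphi(s^{-1}\beta)$ is the cochain on $\Sigma S^m$ with $\varphi(s^{-1}\beta)(x,1)=\beta(x)$, and the same lemma identifies $\phi(s[S^m])$ with the fundamental class of $\Sigma S^m$ for compatibly chosen orientations. Thus $\int_{\Sigma S^m}\tau(\eta)=\bigl\langle\varphi(s^{-1}\beta),\phi(s[S^m])\bigr\rangle=\bigl\langle s^{-1}\beta,s[S^m]\bigr\rangle=\bigl\langle\beta,[S^m]\bigr\rangle=\int_{S^m}\Upsilon\tau(\eta)$, the middle equality up to a universal Koszul sign absorbed into the fixed orientation conventions. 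Chaining Steps 1--3 yields $\langle\Sigma f,\omega\rangle_{\E_{n+1}}=\langle f,\Upsilon\omega\rangle_{\E_n}$.

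The main obstacle is Step 3: fitting the normalization of the arity-one part of $\Upsilon$ together with the desuspension isomorphisms $\varphi$ and $\phi$ so that the fundamental classes of $S^m$ and $\Sigma S^m$ correspond on the nose with no stray sign. A secondary subtlety, in Step 2, is checking that the bar differential is genuinely internal on the weight-one summand, which is what makes $\int_{S^m}$ a cohomological invariant there; both are essentially bookkeeping but need care.
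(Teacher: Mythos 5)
Your proof is correct and follows essentially the same route as the paper's: the commutative square $\Upsilon\circ B_{\E_{n+1}}(\Sigma f)^{*}=B_{\E_n}f^{*}\circ\Upsilon$, followed by a comparison of evaluations on the sphere models using the weight-one behaviour of $\Upsilon$ and the one-dimensionality of $H^{-m}$ of the bar construction (Lemma \ref{lemhomologyoftheenbarconstructionindegreen}). Your Steps 2 and 3 simply spell out more explicitly (injectivity of the weight-one inclusion on $H^{-m}$, matching of fundamental classes under $\phi,\varphi$) what the paper's proof asserts when it says $\Upsilon$ sends generator to generator and hence the two evaluations agree.
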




\begin{proof}
To prove the theorem we need to show that the evaluation of $\Sigma f^*\omega $ on $S^{m+1}$ is equal to the evaluation $f^* \Upsilon( \omega)$ on $S^m$. 

To do this first observe that we have the following commutative diagram (of chain complexes, not $\E_n$-algebras or $\D_n$-coalgebras):
\[
\xymatrix{
B_{\E_{n+1}}\sigma^{-1} \N(X) \ar[r]^{B_{\E_{n+1}}\Sigma f^*} \ar[d]^{\Upsilon} & B_{\E_{n+1}} \sigma^{-1} \N(S^m) \ar[d]^{\Upsilon} \\
B_{\E_{n}} \N(X) \ar[r]^{B_{\E_n} f^*} & B_{\E_{n}} \N(S^m). 
}
\] 
It is straightforward to check that this diagram commutes. So in particular we have an equality $\Upsilon B_{\E_{n+1}}f^* \omega= B_{\E_{n}}f^* \Upsilon \omega$.

The next step is to notice that the right vertical map $\Upsilon:B_{\E_{n+1}} \sigma^{-1} \N(S^n) \rightarrow B_{\E_{n}} \N(S^m)$ induces an isomorphism between the degree $-(m+1)$ part of $H_{-(m+1)}(B_{\E_{n+1}} \sigma^{-1} \N(S^m))$ and the degree $-m$ part of $H_{-m}(B_{\E_{n}}\N(S^m))$. This is because in weight one, the morphism $\Upsilon$ maps the generator of the cohomology of $S^{m+1}$  to the generator of the cohomology of $S^m$. Since $\Upsilon$ is a chain map and both $H_{-(m+1)}(B_{\E_{n+1}} \sigma^{-1} \N(S^m))$ and $H_{-m}(B_{\E_{n}}\N(S^m))$ are equal to $\K$, this induces an isomorphism between them. 

The next step is to notice that because $\Upsilon$ induces an isomorphism between \\ $H_{-(m+1)}(B_{\E_{n+1}} \sigma \N(S^m))$ and $H_{-m}(B_{\E_{n}}\N(S^m))$, the evaluation of $\omega\in B_{\E_{n+1}} \sigma^{-1} \N(S^m)_{-(m+1)}$ on $S^{m+1}$ is equal to the evaluation of $\Upsilon(\omega)$ on $S^m$. We therefore see that if we start with an a cocycle $\omega\in B_{\E_{n+1}}\sigma^{-1}\N(X)$, that the following two  evaluations are equal
\[
\int_{S^{m+1}} \tau( B_{\E_{n+1}}f^* \omega) =  \int_{S^{m}} \tau( B_{\E_{n}}f^* \Upsilon \omega),
\]
where $\tau ( B_{\E_{n+1}}f^* \omega)$ (resp. $\tau(B_{\E_{n}}f^* \Upsilon \omega)$) is a weight one replacement of the cocycle $(B_{\E_{n+1}}f^* \omega)$ (resp. $B_{\E_{n}}f^* \Upsilon \omega$). Since the left hand side  is the $\E_{n+1}$-Hopf invariant of $\Sigma f$ with respect to the cocycle $\omega$ and the right hand side is the $\E_n$-Hopf invariant of $f$ with respect to the cocycle $\Upsilon \omega$, this equality shows that the formula from Theorem \ref{thrmenen+1formula} indeed holds and concludes its proof.
\end{proof}




\part{Examples, applications and open questions}

In the last part of this paper we will give a few examples and explain how the $\E_n$-Hopf invariants can be used to obtain information about the stable homotopy groups of a space.

But before we continue with some more advanced applications, let us first remark that in theory it should be straightforward to compute the $\E_n$-Hopf invariants of a map between a suitable triangulation of $S^m$ and a finite simplicial complex $X$. At the moment to the author's knowledge there are not many explicit examples of such maps. There are however algorithms that should allow a computer to generate low dimensional examples of such maps (see for example \cite{FFWZ}). This could possibly provide us with more examples of computations of the $\E_n$-Hopf invariants of maps.

As another application, we use the sequence of maps from Corollary \ref{corstability} to apply the $\E_n$-Hopf invariants to stable homotopy theory. More precisely,  there exists  a pairing between the inverse limit of the sequence from Corollary \ref{corstability} and the stable homotopy groups of a space. In Section \ref{secapplycationstostablehomotopytheory}, we describe this pairing in detail.

We end the paper with some open questions.

\section{Examples}\label{secexamples}

In this section we will give a few simple examples of maps with non-zero Hopf invariants. It seems highly likely that it is possible to find more interesting examples, but this would probably require  more advanced techniques.  For now we will only discus some simple  examples and in section \ref{subsecmoreexxaamples} we will explain how to find more interesting examples. But before we give any examples we first need to introduce some notation.

From now on we will exclusively work over $\F_2$, the field with two elements.

\subsection{Some notation}



To describe some of our examples we first need to introduce some notation. All the examples in this paper are of weight two cocycles in the $\E_n$-bar construction. We therefore need to describe the weight one and two components of the $\E_n$-bar construction. 

First we will introduce some notation for the arity two part of the $\E_n$-operad. As is explained in Section \ref{secEnalgebras}, the arity two component of $\E_n$ is spanned by the operations of the form $(Id,\tau,Id,\tau,...)$,  such that the length of the sequence is at most $n+1$ and where $Id$ and $\tau$ are the two elements of $\S_2$. From now on we will denote the binary operation corresponding to the sequence of length $i+1$ by the cup-$i$ product, where $i$ is the length of the sequence minus $2$. For example $(Id,\tau)(\alpha,\beta)$ will be denoted by $\alpha \cup_1 \beta$ and $(\tau,Id,\tau)(\alpha,\beta)$ will be denoted by $\beta \cup_2 \alpha$ (notice the switch between $\alpha$ and $\beta$ because of the symmetric group action). For simplicity we will denote the $\cup_0$-product by just a $\cup$.

As is explained in Sections \ref{secthebarconstruction} and \ref{secEnalgebras}, the underlying graded vector space of the $\E_n$-bar construction of an $\E_n$-algebra $A$ is given by $B_{\E_n}A=\bigoplus_r \D_n(r) \otimes_{\S_r} A^{\otimes r}$. In Section \ref{subsecKoszulduality1}, it is also explained that as a graded vector space $\D_n(2)$ is isomorphic to $\Sus^{-n} \E_n^{\vee}$. A basis for $\D_n(2)$ is therefore given by elements of the form $s^{n}\cup_i^{\vee}$, with $\cup_i\in \E_n(2)$. Note that since $\E_n(2)$ is bounded between degrees $0$ and $n-1$, the arity $2$ part of the cooperad $\D_n(2)$ is bounded between degrees $1$ and $n$ (see Lemmas \ref{lemenisbounded} and \ref{lemDnisbounded}).


We will use the following notation for the weight two elements in the $\E_n$-bar construction. For $0 \leq i \leq n-1$, denote by $\vert_i$ the cooperation in $\D_n(2)_{n-i+1}$ corresponding to the element $s^n \cup_i^{\vee}$. Further denote by  $\nu \vert_i \omega \in \D_n(2) \otimes_{\S_2}  A^{\otimes 2}$, the element defined by $s^{n}\cup_i^{\vee}\otimes \nu \otimes \omega$. The cooperation $\vert_i$ has degree $n-i$ and the differential is given by $d(\vert_i)=\vert_{i+1}+\tau \vert_{i+1}$, where $ \tau$ is the generator of $\S_2$. Since we are working over a field of characteristic $2$ there are no signs. 

Using this notation we can now recall the twisting morphism from Section 6.2.1 of \cite{Fres3}.

\begin{lemma}\label{lemaritytwocompoentoftau}
The arity two component of the twisting morphism $\tau_n:\D_n \rightarrow \E_n$ is defined by sending the element corresponding to the cooperation $\vert_i$ to $\tau^{n-1} \cup_{n-i-1}$, where $\tau$ is the generator of $\S_2$. 
\end{lemma}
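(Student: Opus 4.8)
\emph{Proof proposal.} The statement is a recollection of the computation in Section~6.2.1 of \cite{Fres3}, transcribed into the notation fixed above, so the plan is to recall why this particular formula occurs and to check that it is internally consistent. First I would make the arity-two data explicit. Since $\D_n=\Sus^{-n}\E_n^{\vee}$ we have $\D_n(2)_d\cong (\E_n(2)_{n-d})^{\vee}$, and by Lemma~\ref{lemenisbounded} the chain complex $\E_n(2)$ is two-dimensional in each degree $0,\dots,n-1$, spanned by $\cup_j$ and its conjugate $\tau\cup_j$; hence, in accordance with Lemma~\ref{lemDnisbounded}, $\D_n(2)$ is two-dimensional in each degree $1,\dots,n$, spanned by $\vert_i=s^n\cup_i^{\vee}$ (of degree $n-i$) and $\tau\vert_i$. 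Equivariance of a twisting morphism forces $\tau_n(\tau\vert_i)=\tau\cdot\tau_n(\vert_i)$, so it suffices to determine $\tau_n(\vert_i)$, and since $\tau_n$ has degree $-1$ the target must lie in $\E_n(2)_{n-i-1}$, consistently with $\cup_{n-i-1}$.

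Next I would observe that in arity two the Maurer--Cartan equation defining a twisting morphism degenerates: $\tau_n$ has degree $-1$ and $\D_n$, $\E_n$ are connected, so $\tau_n$ vanishes in arities $0$ and $1$, and therefore the quadratic term $\tau_n\star\tau_n$, which is built from infinitesimal composites $\tau_n\circ_{(1)}\tau_n$ and hence requires an arity-one input to produce an arity-two output, contributes nothing. The equation thus collapses to $d_{\E_n}\circ\tau_n+\tau_n\circ d_{\D_n}=0$ on $\D_n(2)$, so all that has to be checked about the displayed formula is that $\vert_i\mapsto\tau^{n-1}\cup_{n-i-1}$ extends $\S_2$-equivariantly to a degree $-1$ chain map $\D_n(2)\to\E_n(2)$.

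Then comes the differential bookkeeping. On the target, the Barratt--Eccles differential kills the interior faces of the alternating sequence $(Id,\tau,Id,\dots)$ — two consecutive equal entries give zero — and leaves $\partial(\cup_j)=\cup_{j-1}+\tau\cup_{j-1}$ over $\F_2$; on the source we are given $d(\vert_i)=\vert_{i+1}+\tau\vert_{i+1}$, which is the $s^n$-shifted linear dual of the former. Applying the candidate map and using equivariance, $\tau_n(d(\vert_i))=\tau^{n-1}\cup_{n-i-2}+\tau^{n}\cup_{n-i-2}$, while $\partial(\tau_n(\vert_i))=\partial(\tau^{n-1}\cup_{n-i-1})=\tau^{n-1}\cup_{n-i-2}+\tau^{n}\cup_{n-i-2}$; the two agree, and both vanish when $i=n-1$. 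Degrees match since $\vert_i$ has degree $n-i$ and $\cup_{n-i-1}$ has degree $n-i-1$. This shows the formula is consistent with $\tau_n$ being a twisting morphism; that it is the \emph{correct} one is where \cite{Fres3} is really needed.

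\textbf{The main obstacle.} A degree $-1$ $\S_2$-equivariant chain map $\D_n(2)\to\E_n(2)$ is not unique, so the substantive point is knowing that the arity-two restriction of the Koszul twisting morphism $\tau_n$ — equivalently, of the quasi-isomorphism $\psi_n:\Omega(\D_n)\to\E_n$ of Theorem~B of \cite{Fres3}, via Theorem~6.5.7 of \cite{LV} — is precisely this representative rather than another. Establishing that requires unwinding Fresse's construction of $\psi_n$ far enough to read off its arity-two (weight-two) component; once that normalisation is granted, everything else is the routine verification sketched above.
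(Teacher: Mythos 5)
Your proposal is essentially the paper's approach: the paper states this lemma purely as a recollection of Section~6.2.1 of \cite{Fres3} and gives no independent proof, just as you ultimately defer to Fresse for the identification of the arity-two component of $\tau_n$. Your added consistency checks (degree count, $\S_2$-equivariance, and compatibility with the differentials once the Maurer--Cartan equation collapses in arity two) are correct and harmless, but as you yourself note, the substantive content is Fresse's normalisation, which is exactly what the paper cites.
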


\subsection{Some examples}

In all the examples we consider we will implicitly assume that all the spaces we consider are sufficiently triangulated simplicial sets. So in particular we assume that all the maps we consider can be realized as maps of simplicial sets.
 
The first example is the well  known example of the classical Hopf invariant.

\begin{example}\label{exampleprevious}
Let $f:S^{2m-1} \rightarrow S^m$ be a map, then the classical Hopf invariant is defined as the $\E_1$-Hopf invariant with respect to the cocycle $\omega \vert_0 \omega$, where $\omega$ is a cochain representing the generator of the cohomology of $S^m$, such that $\omega \cup \omega=0$. Since $\omega$ is of degree $-m$ and $H^{-m}(S^{2m-1})=0$, we can find a cocycle $d^{-1}f^*\omega$. The weight reduction of $f^*\omega \vert_0 f^*\omega$ is then given by $f^*\omega \vert_0 f^*\omega-d_{B}( d^{-1}f^*\omega \vert_0 f^* \omega)$ which is equal to $d^{-1}f^* \omega \cup f^* \omega$. The corresponding Hopf invariant is then given by
\[
\int_{S^{2m-1}}d^{-1}f^* \omega \cup f^* \omega,
\]
 the evaluation of $d^{-1}f^* \omega \cup f^* \omega$ on the fundamental class of $S^{2m-1}$.  This gives us exactly the formula from Section \ref{secclassicalhopfinvarainaiianiteihskbndsbcoisdebioisgksdaaaaaaaaaaaaaaaaaarrrrrrrgggggggg} and therefore we recover the classical Hopf invariant.
 
 By Adams' famous result (see \cite{Adams1}), there exist only three maps $f_1:S^3 \rightarrow S^2$, $f_2:S^{7} \rightarrow S^4$ and $f_3:S^{15}\rightarrow S^8$ with Hopf invariant $1$ (we exclude the map $f:S^n \rightarrow S^n$). These maps will be important for Example \ref{examplethenextexample}. 
\end{example}

The next example is a generalization of the previous example in which we show that when we work over a field of characteristic $2$ we can detect more classes of maps.

\begin{example}\label{example3}
Let $f:S^{2m-i} \rightarrow S^{m}$ be a map with $i<m$. Let $\omega \in \tilde{N}^{-m}(S^m)$ be a representative of the generator of the cohomology of $S^m$, such that $\omega \cup_{i-1} \omega =0$, further let $i\leq n<m$. We will now give an example of a cocycle that gives a non-zero $\E_n$-Hopf invariant and that has the possibility to detect non-trivial maps from $S^{2m-i}$ to $S^m$. In Example \ref{examplethenextexample} we will show that there actually exist maps with a non-zero Hopf invariant with respect to these cocycles. 

The cocycle we will study in this example is $\omega \vert_{n-i} \omega$ in the $\E_n$-bar construction, because we assumed that $\omega \cup_{i-1} \omega=0$ this is indeed a cocycle. When we pull $\omega \vert_{n-i} \omega$ back along $f$, we get $f^* \omega \vert_{n-i} f^* \omega$, we will now weight reduce this cocycle to get a weight one cocycle which we can evaluate on the fundamental class of $S^{2m-i}$. First note that similar to Example \ref{exampleprevious}, we can find a  $d^{-1}f^*\omega$. The first step in the weight reduction of $\omega \vert_{n-i} \omega$ is then defined as $f^* \omega \vert_{n-i} f^*\omega + d_B d^{-1}( f^*\omega \vert_{n-i} f^*\omega)$, which is equal to $d^{-1} f^*\omega \cup_{i-1} f^*\omega+d^{-1}f^* \omega \vert_{n-i-1} f^* \omega+f^* \omega \vert_{n-i-1} d^{-1}f^* \omega$. This is still a weight $2$ cocycle, but it has a lower $\E_n$-degree. We therefore need to weight reduce again. This time we use the cocycle $d^{-1}f^* \omega \vert_{n-i-1} d^{-1}f^* \omega$. The total weight reduction is then given by $d^{-1}f^* \omega\cup_{i-1}f^* \omega + d^{-1}f^* \omega \cup_{i-2} d^{-1}f^* \omega$. The corresponding Hopf invariant is then given by 
\[
\int_{S^{2m-i}} d^{-1}f^* \omega\cup_{i-1}f^* \omega + d^{-1}f^* \omega \cup_{i-2} d^{-1}f^* \omega.
\]
\end{example}



In the next example we will show how we can use Theorem \ref{thrmenen+1formula} to show that there are examples of maps with non-zero Hopf invariants.

\begin{example}\label{examplethenextexample}
In this example we use the previous examples to show that there are non-trivial maps from $S^{m+i}\rightarrow S^m$ for $i=1,3,7$. It is straightforward to check that with our grading conventions $\Upsilon(\vert_0)=\vert_0$. Then note that the cocycle $\omega \vert_{n-i} \omega$ from Example \ref{example3}, is the image of the cocycle $s^{-1} \omega \vert_{n-i+1} s^{-1} \omega$ under the map $\Upsilon:B_{\E_{n+1}}\N(S^{m+i+1})\rightarrow B_{\E_{n}}\N(S^{m+i})$. For $i=n$, we get a sequence of cocycles 
\[
\omega \vert_0 \omega , \mbox{  } s^{-1} \omega \vert_0 s^{-1} \omega,\mbox{  } ..., \mbox{  } s^{-i} \omega \vert_0 s^{-i} \omega , \mbox{  }... ,
\]
with $s^{-r} \omega \vert_0 s^{-r} \omega \in B_{\E_{n+r}}\sigma^{-r}\N(X)$, such that 
\[
\omega \vert_0 \omega = \Upsilon (s^{-1}\omega \vert_0 s^{-1}\omega)= \Upsilon^2(s^{-2}\omega \vert_0 s^{-2}\omega)= ... = \Upsilon^i(s^{-i}\omega \vert_0 s^{-i}\omega)=...  .
\]
Because $\omega \vert_0 \omega$ is the cocycle corresponding to the classical Hopf invariant and by Adams' result, there exist maps $f_1$, $f_2$ and $f_3$ with Hopf invariant $1$. We can now use Theorem \ref{thrmenen+1formula} which implies that $\Sigma^r f_i :S^{m+i+r} \rightarrow S^{m+r}$ has a non-zero Hopf invariant with respect to the  cocycle  $s^{-r} \omega \vert_0 s^{-r} \omega$. The suspensions of the Hopf invariant one maps are therefore not homotopic to the constant map and we have found an example of a family of maps with non-zero Hopf invariants.
\end{example}

As a corollary we find an alternative proof of the well known result that certain homotopy groups of spheres are non-trivial.

\begin{corollary}
 The homotopy group $\pi_{n+1}(S^n)$ is non-trivial for all $n \geq 2$,  $\pi_{n+3}(S^n)$ is non-trivial for all $n \geq 4$ and $\pi_{n+7}(S^n)$ is non-trivial for all $n\geq 8$ .
\end{corollary}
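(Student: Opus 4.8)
The plan is to read this corollary directly off Example~\ref{examplethenextexample}, the only additional ingredient being the elementary observation that a map carrying a non-trivial Hopf pairing cannot be null-homotopic. First I would fix notation: by Adams' theorem (already used in Example~\ref{exampleprevious}) there are maps $f_1:S^3\rightarrow S^2$, $f_2:S^7\rightarrow S^4$ and $f_3:S^{15}\rightarrow S^8$ of classical Hopf invariant $1$; write $m_1=2$, $m_2=4$, $m_3=8$ and $i_1=1$, $i_2=3$, $i_3=7$, so that $f_k:S^{m_k+i_k}\rightarrow S^{m_k}$ with $i_k<m_k$.

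Next I would invoke Example~\ref{examplethenextexample}: for each $k\in\{1,2,3\}$ and each $r\geq 0$ the $r$-fold suspension $\Sigma^r f_k:S^{m_k+i_k+r}\rightarrow S^{m_k+r}$ has a non-zero $\E_{i_k+r}$-Hopf invariant, namely its pairing with the weight-two cocycle $s^{-r}\omega\vert_0 s^{-r}\omega$ obtained by iterating the map $\Upsilon$ starting from $\omega\vert_0\omega$. I would then record the easy fact that if $\left<\omega,g\right>_{\E_n}\neq 0$ for some cocycle $\omega$ of weight at least $1$, then $g$ is essential: by Theorem~\ref{thrmEnhopfinv} the pairing depends only on the homotopy class of $g$, and the constant map induces the zero map on reduced normalized cochains, hence the zero map on the part of the $\E_n$-bar construction of weight at least $1$, so it pairs to $0$ with every such $\omega$. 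Applying this with $g=\Sigma^r f_k$ shows that $\Sigma^r f_k$ represents a non-trivial class in $\pi_{m_k+i_k+r}(S^{m_k+r})$ for every $r\geq 0$.

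Finally I would reindex: setting $n=m_k+r$, as $r$ runs over the non-negative integers $n$ runs over all integers $\geq m_k$, and $m_k+i_k+r=n+i_k$; hence $\pi_{n+i_k}(S^n)\neq 0$ for all $n\geq m_k$. Specializing to $k=1$ gives non-triviality of $\pi_{n+1}(S^n)$ for $n\geq 2$, to $k=2$ gives non-triviality of $\pi_{n+3}(S^n)$ for $n\geq 4$, and to $k=3$ gives non-triviality of $\pi_{n+7}(S^n)$ for $n\geq 8$. There is essentially no obstacle here: all the genuine work sits in Examples~\ref{example3} and \ref{examplethenextexample} together with Theorem~\ref{thrmenen+1formula}, and the one point that deserves an explicit sentence is the null-homotopy observation above, which is immediate from the homotopy invariance established in Theorem~\ref{thrmEnhopfinv}.
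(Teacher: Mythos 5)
Your proposal is correct and follows essentially the same route as the paper: the corollary is read off Example~\ref{examplethenextexample} (Adams' Hopf invariant one maps, the cocycles $s^{-r}\omega\vert_0 s^{-r}\omega$, and Theorem~\ref{thrmenen+1formula}), combined with the homotopy invariance of the pairing from Theorem~\ref{thrmEnhopfinv}. Your explicit remark that a map with non-zero Hopf pairing cannot be null-homotopic, since the constant map induces zero on reduced cochains and hence pairs trivially with every weight $\geq 1$ cocycle, is exactly the step the paper uses implicitly, so spelling it out is a welcome but minor addition.
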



\begin{remark}
This result is of course already well known and is shown in for example Theorem 4L.2 in \cite{Hat1}. The method used in \cite{Hat1} already has some of the ingredients used in our proof and the theorems in this paper can be seen as a more general and more streamlined version of this idea. 
\end{remark}

\subsection{How to generate more examples}\label{subsecmoreexxaamples}

One of the advantages of the theory described in this paper is, that in theory it would be possible to use a computer to compute the $\E_n$-Hopf invariants of maps between a finite triangulation of the sphere and a finite simplicial complex $X$. Unfortunately there are not many explicit descriptions of such maps, fortunately in \cite{FFWZ} the authors give an algorithm which produces explicit representatives for the homotopy classes of maps from $S^m$ to a simplicial complex $X$. It should therefore be possible to produce a lot more examples of maps and their Hopf invariants by combining this algorithm and the $\E_n$-Hopf invariants.

\section{Applications to stable homotopy theory}\label{secapplycationstostablehomotopytheory}

In this section we explain how we could apply the results from this paper to stable homotopy theory. Recall that the formula from Theorem \ref{thrmenen+1formula} gives a connection between the Hopf invariants of a map and its suspension. We taking the inverse limit of the sequence of maps from Corollary \ref{corstability} we get invariants of stable homotopy classes of maps.


\begin{definition}\label{defBS}
Let $X$ be an $(n+1)$-connected simplicial set, the set of stable $\E_n$-Hopf invariants is defined as the inverse limit of the following sequence
\[
... \xrightarrow{\Upsilon} s^3 B_{\E_{n+3}}\N(\Sigma^3 X)\xrightarrow{\Upsilon} s^2 B_{\E_{n+2}}\N(\Sigma^2 X) \xrightarrow{\Upsilon} s B_{\E_{n+1}}\N(\Sigma X) \xrightarrow{\Upsilon} B_{E_n} \N(X).
\]
We will denote the stable Hopf invariants by $B^S(X):=\varprojlim s^{r} B_{E_{n+r}}\N(\Sigma^r X)$. We will call a cocycle $\omega \in B_{\E_n}\N(X)$ stable if it is the image of an element $\tilde{\omega} \in B^S(X)$ under the canonical map from $B^S(X)$ to  $B_{\E_n}\N(X)$.
\end{definition}

\begin{remark}
Note that in this definition we could replace the $\E_n$-Hopf invariants by $\E_k$-Hopf invariants as long as $k<n$. It is however expected that the higher the $k$ the more the $\E_k$-Hopf invariants detect. The reason we need the connectivity assumption is because this way the Hopf pairing is defined for all degrees $*$.
\end{remark}

Note that because of Theorem \ref{thrmsuspensionfunctor} the sequence from Definition \ref{defBS} is equivalent to the sequence 
\[
... \xrightarrow{\Upsilon} s^3 B_{\E_{n+3}}\sigma^{-3}\N( X)\xrightarrow{\Upsilon} s^2 B_{\E_{n+2}}\sigma^{-2}\N(X) \xrightarrow{\Upsilon} s B_{\E_{n+1}}\sigma^{-1}\N(X) \xrightarrow{\Upsilon} B_{E_n} \N(X).
\]
and we therefore have an isomorphism between $\varprojlim s^r B_{\E_{n+r}}\sigma^{-r}\N(X)$ and $\varprojlim s^{r} B_{E_{n+r}}\N(\Sigma^r X)$. This gives us two different ways to compute $B^S(X)$.

\begin{theorem}\label{thrmBS}
Let $X$ be an $(n+1)$-connected simplicial set, then there exists a pairing
\[
\left<,\right>_S:H_*(B^S(X))\otimes \pi_*^S(X) \rightarrow \K,
\]
which we call the stable Hopf pairing. This pairing is defined as follows. Let $\omega \in B^S(X)_k$ and $f\in \pi_k^S(X)$, then we fix a representative $\tilde{f}:S^{k+r} \rightarrow \Sigma^r X$ of the stable homotopy class $f$ and let $\tilde{\omega} \in B_{\E_{n+r}}\N(X)$ be the image of $\omega$ under the canonical map coming from the limit. Then we define the stable Hopf pairing by
\[
\left<\omega,f \right>_S:=\left<\tilde{\omega},\tilde{f}\right>_{\E_{n+r}}.
\]
\end{theorem}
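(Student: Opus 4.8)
The plan is to check that the formula $\langle\omega,f\rangle_S:=\langle\tilde\omega,\tilde f\rangle_{\E_{n+r}}$ does not depend on either of the two choices it involves — the level $r$ at which one stabilises, and the representative $\tilde f\colon S^{k+r}\to\Sigma^r X$ of the stable class $f$ — and that the resulting map is bilinear.

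First I would fix notation for the inverse system of Definition \ref{defBS}. For each $r$ the canonical projection $B^S(X)=\varprojlim_r s^r B_{\E_{n+r}}\N(\Sigma^r X)\to s^r B_{\E_{n+r}}\N(\Sigma^r X)$ is a chain map, so every class $\omega\in H_*(B^S(X))$ has a well-defined image $\omega_r$ in the homology of the $r$-th stage, and these are compatible under the transition maps of the system. We never need the natural comparison $H_*(\varprojlim)\to\varprojlim H_*$ to be an isomorphism, since a given $\omega$ is only ever pushed forward, so no $\varprojlim^1$ term enters the argument. Using Theorem \ref{thrmsuspensionfunctor} to identify $\N(\Sigma^{r+1}X)\cong\sigma^{-1}\N(\Sigma^r X)$, the transition map from the $(r+1)$-st to the $r$-th stage is, after the suspension shifts built into Definition \ref{defBS}, exactly the map $\Upsilon$ of Definition \ref{defupsilon} associated to the $\E_{n+r}$-algebra $\N(\Sigma^r X)$; in particular $\Upsilon_*(\omega_{r+1})=\omega_r$.

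The heart of the matter is a level-raising identity for the pairing. Since $X$ is $(n+1)$-connected, $\Sigma^r X$ is $(n+1+r)$-connected, so any nonzero stable class lives in a degree $k\ge n+2$; hence $n+r<k+r$ at every level, which is precisely the hypothesis under which both the $\E_{n+r}$-Hopf pairing (Theorem \ref{thrmEnhopfinv}) and the comparison Theorem \ref{thrmenen+1formula} are available. Take $r$ in the stable range, so that $f$ is represented by a well-defined homotopy class $\tilde f_r\colon S^{k+r}\to\Sigma^r X$ and $\Sigma\tilde f_r$ is homotopic to $\tilde f_{r+1}$. Applying Theorem \ref{thrmenen+1formula} with base $\Sigma^r X$, sphere $S^{k+r}$, and the pair $(\E_{n+r},\E_{n+r+1})$ gives
\[
\langle\Upsilon\omega_{r+1},\tilde f_r\rangle_{\E_{n+r}}=\langle\omega_{r+1},\Sigma\tilde f_r\rangle_{\E_{n+r+1}},
\]
and combining this with $\Upsilon_*\omega_{r+1}=\omega_r$, with $\Sigma\tilde f_r\simeq\tilde f_{r+1}$, and with the homotopy invariance of the $\E_{n+r+1}$-Hopf pairing (Theorem \ref{thrmEnhopfinv}) yields
\[
\langle\omega_r,\tilde f_r\rangle_{\E_{n+r}}=\langle\omega_{r+1},\tilde f_{r+1}\rangle_{\E_{n+r+1}}.
\]
Hence $\langle\omega_r,\tilde f_r\rangle_{\E_{n+r}}$ stabilises in $r$, and its eventual value is taken to be $\langle\omega,f\rangle_S$. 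Independence of the representative then follows immediately: in the stable range $f$ determines $\tilde f_r$ up to homotopy and the $\E_{n+r}$-Hopf pairing only sees that homotopy class, while two representatives chosen at different levels differ by a suspension and are reconciled by the displayed identity. The pairing is $\K$-linear in $\omega$ because the projections out of $\varprojlim$ are linear and each $\E_{n+r}$-Hopf pairing is linear in its bar-construction argument, and it is additive in $f$ because each $\E_{n+r}$-Hopf pairing is; this gives the bilinear pairing of the statement.

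The one genuine difficulty I anticipate is bookkeeping rather than substance: one must track the shifts $s^r$ and the iterated identifications $\N(\Sigma^r X)\cong\sigma^{-r}\N(X)$ carefully enough to recognise the transition maps of the inverse system as the maps $\Upsilon$ of Definition \ref{defupsilon}, and one must verify at each stage the degree inequality $n+r<k+r$ demanded by Theorem \ref{thrmenen+1formula} (and the hypotheses of Theorem \ref{thrmEnhopfinv}) — which is exactly the role played by the hypothesis that $X$ is $(n+1)$-connected.
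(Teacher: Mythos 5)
Your proposal is correct and follows essentially the same route as the paper: independence of the stabilisation level $r$ is deduced from Theorem \ref{thrmenen+1formula} (via the compatibility $\Upsilon_*\omega_{r+1}=\omega_r$ and $\Sigma\tilde f_r\simeq\tilde f_{r+1}$), and independence of the chosen representative follows from the homotopy invariance of the $\E_{n+r}$-Hopf pairing in Theorem \ref{thrmEnhopfinv}. The paper's own proof is just a terse citation of these same two theorems, so your write-up is simply a more detailed version of the intended argument, including the bookkeeping of the inverse system and the role of the $(n+1)$-connectivity hypothesis.
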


\begin{proof}
To prove the theorem we need to show that this pairing is independent of $r$ and homotopy invariant. The fact that the pairing is independent of $r$ is a straightforward consequence of Theorem \ref{thrmenen+1formula}. The fact that the pairing is homotopy invariant follows from Theorem \ref{thrmEnhopfinv}, which proves the theorem.
\end{proof}

\begin{corollary}\label{corBS}
Let $f:S^m \rightarrow X$ be a map such that $f$ has a non-zero Hopf invariant with respect to a stable cocycle $\omega\in B_{\E_n}\N(X)$ (with $n<m$), then $f$ induces a non-trivial element of the stable homotopy groups of $X$.
\end{corollary}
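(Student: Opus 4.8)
The plan is to deduce Corollary \ref{corBS} as a formal consequence of Theorem \ref{thrmBS} together with the contrapositive. Suppose, for contradiction, that the class of $f$ in $\pi_m^S(X)$ is trivial. Since $\omega \in B_{\E_n}\N(X)$ is assumed to be a stable cocycle, by Definition \ref{defBS} there is an element $\tilde{\omega} \in B^S(X)$ whose image under the canonical projection $B^S(X) \to B_{\E_n}\N(X)$ is $\omega$; in particular $\tilde{\omega}$ represents a homology class in $H_*(B^S(X))$ (here we use that $f$ being a nonzero $\E_n$-Hopf invariant witness forces $\omega$ to be a cocycle of the appropriate degree $-m$, hence defines a class, and the stability hypothesis lifts this to $B^S(X)$). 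Then I would feed $\tilde\omega$ and the stable class of $f$ into the stable Hopf pairing of Theorem \ref{thrmBS}.

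The key computation is the chain of equalities
\[
\left<\tilde\omega, [f]\right>_S = \left<\omega, f\right>_{\E_n} = \int_{S^m}\tau(B_{\E_n}f^*\omega) \neq 0,
\]
where the first equality is the definition of $\left<,\right>_S$ applied with $r=0$ and the representative $\tilde{f}=f$ of the stable class, and the last inequality is exactly the hypothesis of the corollary. On the other hand, if $[f]=0$ in $\pi_m^S(X)$, then by bilinearity (or rather well-definedness) of the pairing $\left<,\right>_S$ from Theorem \ref{thrmBS} — which is defined on $\pi_*^S(X)$ and hence only depends on the stable homotopy class — we would have $\left<\tilde\omega, [f]\right>_S = \left<\tilde\omega, 0\right>_S = 0$, a contradiction. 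Hence $[f]$ is nontrivial.

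I would lay this out in three short steps: (1) invoke stability of $\omega$ to produce the lift $\tilde\omega \in H_*(B^S(X))$; (2) observe that the stable Hopf pairing evaluated on $\tilde\omega$ and the stable class of $f$, using the representative $\tilde f = f$ at level $r=0$, computes back to the unstable $\E_n$-Hopf invariant $\left<\omega, f\right>_{\E_n}$, which is nonzero by assumption; (3) note that the stable pairing is by construction a function of the stable homotopy class, so a trivial stable class pairs to zero with everything, giving the contradiction.

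The only genuinely delicate point — and the step I would be most careful about — is checking that the stable Hopf pairing $\left<,\right>_S$ is actually well-defined on the nose for the representative $\tilde f = f$ at suspension level $r=0$, i.e. that Theorem \ref{thrmBS}'s pairing, read at $r=0$, literally recovers $\left<\omega,f\right>_{\E_n}$; this is immediate from the displayed formula $\left<\omega,f\right>_S := \left<\tilde\omega,\tilde f\right>_{\E_{n+r}}$ with $r=0$. There is no real obstacle beyond bookkeeping: the corollary is essentially a restatement of the fact that a homotopy-invariant, stably-defined pairing cannot be nonzero on a nullhomotopic class, and all the substantive work has already been done in Theorems \ref{thrmEnhopfinv}, \ref{thrmenen+1formula} and \ref{thrmBS}.
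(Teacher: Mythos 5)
Your argument is correct, but it is routed differently from the paper's. You deduce the corollary formally from Theorem \ref{thrmBS}: lift $\omega$ to $\tilde\omega\in B^S(X)$, note that the stable pairing computed at level $r=0$ with representative $f$ is $\left<\omega,f\right>_{\E_n}\neq 0$, and derive a contradiction from the fact that a stably trivial class must pair to zero. The paper instead argues directly and never invokes Theorem \ref{thrmBS}: stability of $\omega$ gives a compatible sequence $\tilde\omega_r$ with $\Upsilon^r(\tilde\omega_r)=\omega$, and Theorem \ref{thrmenen+1formula} yields $\left<\tilde\omega_r,\Sigma^r f\right>_{\E_{n+r}}=\left<\omega,f\right>_{\E_n}\neq 0$ for every $r$, so each suspension $\Sigma^r f$ is essential and hence the stable class is non-trivial. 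The two arguments rest on the same ingredients (Theorem \ref{thrmBS} is itself proved from Theorems \ref{thrmenen+1formula} and \ref{thrmEnhopfinv}), so yours is essentially a contrapositive repackaging; what it buys is brevity and a clean black-box use of the stable pairing, while the paper's direct version gives the marginally stronger conclusion that every finite suspension $\Sigma^r f$ is already non-trivial and only uses the hypotheses of Theorem \ref{thrmenen+1formula} (simple connectivity and $n<m$) rather than formally importing the connectivity assumption under which Theorem \ref{thrmBS} is stated --- harmless here, since ``stable cocycle'' already presupposes $B^S(X)$. One small step you should spell out: ``a trivial stable class pairs to zero with everything'' is not bilinearity but the observation that a null-homotopic representative may be taken constant, and a constant map pulls reduced cochains back to zero, so its Hopf invariant vanishes; combined with the homotopy invariance of Theorem \ref{thrmEnhopfinv} this gives the vanishing you need (the paper uses the same fact implicitly when it passes from ``non-zero Hopf invariant'' to ``non-trivial map'').
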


\begin{proof}
Since $\omega$ is a stable cocycle there exists a sequence of $\tilde{\omega}_r\in B_{\E_{n+r}}\N(X)$, such that $\Upsilon^r(\tilde{\omega_r})=\omega$ and $\Upsilon(\tilde{\omega}_r)=\tilde{\omega}_{r-1}$. Because of Theorem \ref{thrmenen+1formula}, we have that $\left<\tilde{\omega}_r,\Sigma^r f\right>_{\E_{n+r}}=\left<\Upsilon^r(\tilde{\omega}),f \right>_{\E_n}=\left<\omega,f\right>_{S}$. So because $\left<\omega,f\right>_{\E_n}$ is non-zero, all the suspensions of $f$ will also have a non-zero Hopf invariant. All the suspensions of $f$ are therefore non-trivial and therefore define a non-trivial homotopy class in the stable homotopy groups of $X$.

\end{proof}

For an example of a stable cocycle see Example \ref{examplethenextexample}. One of the nice things about the stable Hopf invariants is that the can detect whether a map is stable before this map is an element of the stable range. For example the Hopf fibration is not in the stable range, but because it has Hopf invariant $1$, it can already be seen that it will determine a non-zero element in the stable homotopy groups of spheres.

\section{Comparison with other approaches}

The Hopf invariant has been a well studied invariant in topology and there are many generalizations of it. The generalization that is closest to the approach describe in this paper, is due to Steenrod in \cite{Steen1}. In this paper he introduces the functional cup-$i$ products which can be seen as the $\E_n$-Hopf invariants coming from the weight two cocycles in the bar construction. Our approach generalizes this in the sense that we do not only use the cup-$i$ products, but also the higher products of the $\E_n$-structure and are therefore able to define much more invariants. It is at the moment still unclear how our approach exactly compares to other generalizations of the Hopf invariant like for example \cite{Steer1967} and \cite{White1950} and this will be the topic of future work.

Another approach for constructing invariants of homotopy classes of maps is by using higher cohomology operations. See for example Chapter 9 of \cite{McC1} for a description of how these invariants are constructed. At the moment it is unclear whether our approach detects more than these higher cohomology operations. However, the invariants described in this paper have a few advantages above the invariants described in Chapter 9 of \cite{McC1}. The first one is that we do not need any auxiliary spaces and the invariants are therefore easier computable. Given a simplicial map  from a finite triangulation of $S^m$ to a finite simplicial complex $X$, the $\E_n$-Hopf invariants should be completely computable by a computer. This would not be possible for the invariants coming from the higher cohomology operations the way they are described in \cite{McC1}. Another advantage of our approach is that the stability results are much easier to prove.

\section{Open questions and future work}

In the last section of this paper we will give some indications for future work and state some open questions. 




\subsection{The twisting morphism $\tau_n:\D_n \rightarrow \E_n$}

For further calculations, it will be important to have an explicit description of the twisting morphism $\tau_n:\D_n \rightarrow \E_n$. The first question is: can we give an explicit description of this morphism? A consequence of this result would be that we could also compute examples of Hopf invariants for fields of characteristic  different than $2$.

One way to circumvent this problem would be to use a different model for the $\D_n$-cooperad. In this paper we have always used the model for $\D_n$ given by $\D_n=\Sus^{-n}\E_n^{\vee}$, one possibility for another model would be $B\E_n$, the operadic bar construction on $\E_n$. This model has the advantage that the twisting morphism $\pi:B\E_n \rightarrow \E_n$ is completely known. The disadvantage is that this cooperad is much larger than the $\D_n$-cooperad we used in this paper. Another disadvantage is that the stability results are also a bit less clear.


\subsection{Stable homotopy groups and the stable Hopf pairing}

The results from Theorem \ref{thrmBS} show that we have a pairing between the stabilization with respect to $\sigma^{-1}$ and the stable homotopy groups of a space. An obvious question is about how strong this pairing is, how much of the stable homotopy type do the stable $\E_n$-Hopf invariants see?  In an unlikely, but best case scenario the stable Hopf invariants would be complete. There is however no concrete evidence that this should be the case and might be mainly wishful thinking.

But even if the stable Hopf invariants are not complete it will be interesting to see how much they see and  it would be interesting to find a concrete description of $H^*(B^S(X))$ for some simple spaces $X$. The first question would be if we can find a basis as a $\K$-vector space for $H^*(B_S(X))$ and further questions are what the algebraic structure on this graded vector space would be.

\subsection{Geometric interpretation}

As is described in Proposition 1.5 of \cite{SW2}, the Hopf invariants coming from the associative bar construction can be interpreted as  cohomology classes of the loop space evaluated on the looping of the fundamental class of the sphere. Or in more detail if $\gamma$ is a cocycle in the associative bar complex on a simplicial set $X$, then  the value of the Hopf invariant is equal to $\gamma(\Omega f^*(\Omega  S^m ))$, the evaluation of $\gamma$ on the image of the looping of the fundamental class of $S^m$. 

A natural question is whether an analog of this interpretation would be true in our setting as well, but now with the $n$-fold loop space instead of the one-fold loop space. An indication that there is such an interpretation, is stated in Proposition 1.3 of \cite{Hu1}, which states that for an $(n+1)$-connected space the cochains on $X$ and the  chains on the $n$-fold loop space are Koszul dual to each other.  There are however several technical differences between that paper and this paper, so some care needs to be taken.

Another question is whether there is an analog of the interpretation of the $\E_n$-Hopf invariants as linking numbers. As is explained in Section 1.2 of \cite{SW2}, the Hopf invariants can be interpreted as generalized linking numbers. A natural question would be if we can find a similar interpretation in this case.

A geometric interpretation could also open up new directions for the computation of the $\E_n$-Hopf invariants. But at the moment it is not yet clear what this would be in practice.



\bibliographystyle{plain}

\bibliography{bibliography}{}

\def\cprime{$'$}
\begin{thebibliography}{10}

\bibitem{Adams1}
J.~F. Adams.
\newblock On the non-existence of elements of {H}opf invariant one.
\newblock {\em Ann. of Math. (2)}, 72:20--104, 1960.

\bibitem{BF1}
Clemens Berger and Benoit Fresse.
\newblock Combinatorial operad actions on cochains.
\newblock {\em Math. Proc. Cambridge Philos. Soc.}, 137(1):135--174, 2004.

\bibitem{CurSS}
Edward~B. Curtis.
\newblock Simplicial homotopy theory.
\newblock {\em Advances in Math.}, 6:107--209 (1971), 1971.

\bibitem{DCH1}
G.~C. Drummond-Cole and J.~Hirsh.
\newblock Model structures for coalgebras.
\newblock {\em Proceedings of the AMS}, 144(4):1467--1481, 2016.

\bibitem{DS1}
W.~G. Dwyer and J.~Spali{\'n}ski.
\newblock Homotopy theories and model categories.
\newblock In {\em Handbook of algebraic topology}, pages 73--126.
  North-Holland, Amsterdam, 1995.

\bibitem{FFWZ}
Marek Filakovsky, Peter Franek, Uli Wagner, and Stephan Zhechev.
\newblock Computing simplicial representatives of homotopy group element.
\newblock {\em arXiv:1706.00380}.

\bibitem{Fres2}
Benoit Fresse.
\newblock On the homotopy of simplicial algebras over an operad.
\newblock {\em Trans. Amer. Math. Soc.}, 352(9):4113--4141, 2000.

\bibitem{Fres1}
Benoit Fresse.
\newblock Koszul duality of operads and homology of partition posets.
\newblock In {\em Homotopy theory: relations with algebraic geometry, group
  cohomology, and algebraic {$K$}-theory}, volume 346 of {\em Contemp. Math.},
  pages 115--215. Amer. Math. Soc., Providence, RI, 2004.

\bibitem{Fres3}
Benoit Fresse.
\newblock Koszul duality of {$E_n$}-operads.
\newblock {\em Selecta Math. (N.S.)}, 17(2):363--434, 2011.

\bibitem{Hat1}
Allen Hatcher.
\newblock {\em Algebraic topology}.
\newblock Cambridge University Press, Cambridge, 2002.

\bibitem{Hu1}
Po~Hu.
\newblock Higher string topology on general spaces.
\newblock {\em Proc. London Math. Soc. (3)}, 93(2):515--544, 2006.

\bibitem{LV}
Jean-Louis Loday and Bruno Vallette.
\newblock {\em Algebraic operads}, volume 346 of {\em Grundlehren der
  Mathematischen Wissenschaften [Fundamental Principles of Mathematical
  Sciences]}.
\newblock Springer, Heidelberg, 2012.

\bibitem{MacLaneH}
Saunders Mac~Lane.
\newblock {\em Homology}.
\newblock Die Grundlehren der mathematischen Wissenschaften, Bd. 114. Academic
  Press, Inc., Publishers, New York; Springer-Verlag,
  Berlin-G\"ottingen-Heidelberg, 1963.

\bibitem{McC1}
John McCleary.
\newblock {\em A user's guide to spectral sequences}, volume~58 of {\em
  Cambridge Studies in Advanced Mathematics}.
\newblock Cambridge University Press, Cambridge, second edition, 2001.

\bibitem{MS4}
James~E. McClure and Jeffrey~H. Smith.
\newblock Multivariable cochain operations and little {$n$}-cubes.
\newblock {\em J. Amer. Math. Soc.}, 16(3):681--704 (electronic), 2003.

\bibitem{Mil1}
Joan Mill{\`e}s.
\newblock The {K}oszul complex is the cotangent complex.
\newblock {\em Int. Math. Res. Not. IMRN}, (3):607--650, 2012.

\bibitem{RNW1}
Daniel Robert-Nicoud and Felix Wierstra.
\newblock Homotopy morphisms between convolution homotopy lie algebras.
\newblock {\em arXiv:1712.00794v1}.

\bibitem{SW2}
Dev Sinha and Ben Walter.
\newblock Lie coalgebras and rational homotopy theory {II}: {H}opf invariants.
\newblock {\em Trans. Amer. Math. Soc.}, 365(2):861--883, 2013.

\bibitem{Steen1}
N.~E. Steenrod.
\newblock Cohomology invariants of mappings.
\newblock {\em Annals of Mathematics}, 50(4):954--988, 1949.

\bibitem{Steer1967}
B.~Steer and J.M. Boardman.
\newblock On hopf invariants.
\newblock {\em Commentarii mathematici Helvetici}, 42:180--221, 1967.

\bibitem{Val1}
Bruno Vallette.
\newblock Homotopy theory of homotopy algebras.
\newblock {\em arXiv:1411.5533v3}.

\bibitem{Wei1}
Charles~A. Weibel.
\newblock {\em An introduction to homological algebra}, volume~38 of {\em
  Cambridge Studies in Advanced Mathematics}.
\newblock Cambridge University Press, Cambridge, 1994.

\bibitem{White1950}
George~W. Whitehead.
\newblock A generalization of the hopf invariant.
\newblock {\em Annals of Mathematics}, 51(1):192--237, 1950.

\end{thebibliography}

\end{document}